\newcommand{\R}{\mathbb{R}} 
\newcommand{\Z}{\mathbb{Z}} 
\newcommand{\N}{\mathbb{N}} 
\newcommand{\Bcal}{\mathcal{B}}
\newcommand{\Ccal}{\mathcal{C}}  
\newcommand{\Dcal}{\mathcal{D}}
\newcommand{\Tcal}{\mathcal{T}}
\newcommand{\Zrm}{\mathrm{Z}}  
\newcommand{\f}{\mathrm{f}}
\newcommand{\urm}{\mathrm{u}}
\newcommand{\zrm}{\mathrm{z}}
\newcommand{\ub}{{\bf\urm}} 
\newcommand{\fb}{{\bf\f}}
\newcommand{\zb}{{\bf\zrm}} 
\newcommand{\Zb}{{\bf\Zrm}} 
\newcommand{\Frac}{\displaystyle\frac}
\newcommand{\Int}{\displaystyle\int}
\newcommand{\Sum}{\displaystyle\sum}
\newcommand{\bbeett}{\partial_x^\beta}
\newcommand{\bbtt}{\beta}
\newcommand{\Dt}{\Delta t}
\newcommand{\Dx}{\Delta x}
  \newtheorem{df}{Definition}[section]
  \newtheorem{teo}[df]{Theorem}
  \newtheorem{prop}[df]{Proposition}
  \theoremstyle{definition}
  \newtheorem{remark}[df]{Remark}
\theoremstyle{definition}
 \theoremstyle{definition}
  \newtheorem{example}[df]{Example}
\begin{document}

\title[\sc TAHO Schemes for Dissipative  BGK Hyperbolic Systems]{Time Asymptotic High Order Schemes for Dissipative BGK Hyperbolic Systems}
%
\author[\sc D. Aregba-Driollet, M. Briani, and R. Natalini]{Denise
  Aregba-Driollet$^{1}$, Maya Briani$^{2}$,  and Roberto
  Natalini$^{2}$}\thanks{\,$^{1}$IMB, UMR CNRS 5251,  Universit\'{e}
  de Bordeaux.\\
$^{2}$ Istituto per le Applicazioni del Calcolo ``Mauro Picone'', Consiglio Nazionale delle Ricerche.  
}

\subjclass[2010]{Primary: 65M12; Secondary: 35L65}  

\keywords{Finite differences methods, dissipative hyperbolic problems, BGK systems, asymptotic behavior, asymptotic high order schemes}  

\date{}


\begin{abstract}
{We introduce a new class of finite differences schemes to approximate one dimensional dissipative semilinear hyperbolic systems with a BGK structure.
Using precise analytical time-decay estimates of the local truncation error, it is possible to design schemes, based on the standard upwind approximation, which are   increasingly accurate for large times when approximating small perturbations of constant asymptotic states. Numerical tests show  their better performances with respect to those of other schemes. 
}
\end{abstract}

\maketitle

\section{Introduction}
Consider a  BGK  system in one space dimension, for the unknowns $f^i\in\R^k$, $k\geq 1$ and $i=1,...,m$:
\begin{equation}\label{sysdiag}
\left\{\begin{array}{l}
\partial_t f^i+\lambda_i \partial_x f^i=M_i(u)-f^i, \\
\\
\hbox{ where } u:=\sum_{i=1}^m  f^i.
\end{array}\right.
\end{equation}
Here $x\in\R$ and $t>0$, the $\lambda_i$, for $i=1,...,m$, are   given distinct real values, and the functions $M_i=M_i(u)\in\R^k$ are smooth functions of $u$ such that: 
\[\sum_{i=1}^m M_i(u)=u.\]
Following \cite{BHN}, we rewrite system \eqref{sysdiag} in its {\it conservative-dissipative} form. 
This  means that we assume that there exists an invertible matrix
\begin{equation}\label{matrixD}
D=\left(\begin{array}{cc} D_{11}& D_{12} \\ D_{21}& D_{22}\end{array}\right),
\end{equation}
such that, setting  $m_{1}=k$, $m_2=k(m-1)$, the new unknown
\begin{equation}\label{changeVarGen}
 Z = D f =(u, \tilde Z)^T\in\R^{m_1}\times\R^{m_2},
\end{equation}
 solves the system
\begin{equation}\label{syscd1}
\left\{
\begin{array}{l}
\partial_t u + A_{11}\partial_x u + A_{12}\partial_x \tilde Z=0, \\
\\
\partial_t \tilde Z + A_{21}\partial_x u + A_{22}\partial_x \tilde Z 
= \tilde Q(u) - \tilde Z,
\end{array}\right.
\end{equation}
where $A$ is symmetric and $\tilde Q(u)$ is quadratic in $u$, i.e.:
$\tilde Q(0) =0$ and $ \tilde Q'(0)=0$. Observe that, after the
transformation, the source term is zero in the first component and the
second one is the sum of a quadratic term and of the dissipative term $-\tilde Z$.

It is proved in \cite{HaNa03} and \cite{BHN} that, under some additional conditions, usually induced by suitable entropy functions,  and for initial data which are small perturbations of constant equilibrium states and smooth in some suitable norms, the corresponding smooth solutions exist globally and their  $L^\infty$-norm decay,  for large times, as 
\begin{equation}\label{decaybase}
  u = O( t^{-1/2}),\quad \tilde Z=O( t^{-1}),
\end{equation}
and similar estimates are available for their space and time derivatives. Notice that the improved estimate for the unknown $\tilde Z$ can only be obtained in these new coordinates and does not hold for other combinations of the unknowns. 

The aim of this paper it to take advantage by these time decay estimates to build up more accurate numerical schemes. To be more specific, we  show in the following that for standard numerical schemes, for instance upwind schemes with the source term approximated pointwise by the standard Euler scheme, the local truncation error for the conservative-dissipative unknowns $(u,\tilde Z)$ has the following decay as $t\rightarrow +\infty$, for a fixed CFL ratio:
\[\label{trunc}\begin{array}{cc}
\Tcal_u(x,t)=   O(\Dx\ t^{-3/2}) , & 
\Tcal_{\tilde Z}(x,t)= O(\Dx\  t^{-3/2}).
\end{array}\]
It can be seen numerically that the corresponding absolute errors, for a fixed space step,  decays as 
\[ e_u(t)= O( t^{-1/2}), \ e_{\tilde Z}(t)= O( t^{-1}),\]
which implies, taking into account \eqref{decaybase},  that the relative error is essentially constant in time.

Here, our main goal is to improve the decay rate of the truncation error to achieve an effective decay in time of the relative error, both in $u$ and $\tilde Z$. Before presenting our strategy and our main results, let us review some different attempts to design effective numerical approximations for hyperbolic  equations with a source term.
Let us mention some families of schemes, sometimes overlapping: 
Well Balanced  \cite{GrLe96, Go00, Go02, GJ02, GT03, Ji01, bou}, Runge-Kutta IMEX  \cite{PR03}, upwinding source \cite{Roe, BV94, BOP07,DMR}, and asymptotic preserving \cite{Ji99, Ji10}. The main idea in all these schemes is to use some knowledge of the actual time behavior of the solutions to improve their numerical approximation, at least in some specific regimes. 

In particular, in \cite{DMR}, the linear version of the present
problem was considered and therefore, to approximate the solutions
around non constant asymptotic states, some  schemes were proposed,
which had the property to become higher order (in space) for large
times, thanks to the careful consideration of the analytical decay
rates of the solutions. A different attempt was given by the well
balanced schemes, see for instance \cite{GrLe96, Go00, bou}, namely
schemes which are exact when computed on stationary solutions of the
problem, even if up to now, the time decay rate of the unsteady
solutions has  not yet been explicitely considered. However, the Asymptotic Preserving properties of some Well Balanced schemes, can yield nice results for large times, as in \cite{GT03}, see Section \ref{Sec5} below.

In the present case, a striking difference with these previous works
lies in the fact that  the asymptotic equilibrium states are constant and therefore all the consistent schemes are {\em exact} on them. So, the goal of our work is a bit different. In this paper, we  design schemes which are able to improve their performance for large times, when the initial data are small perturbations of a given constant equilibrium state. To obtain these results, we use the estimates in \cite{BHN} to perform a detailed analysis of the behavior of the truncation error for a general class of schemes, which  generalize and improve those introduced in \cite{DMR}. Thanks to this analysis, we are able construct schemes such that the truncation order behaves as
\[\label{trunc2}\begin{array}{cc}
\Tcal_\ub(x,t)=   O(\Dx\ t^{-2}), & 
\Tcal_{\tilde Z}(x,t))= O(\Dx\ t^{-2}), 
\end{array}\]
for a fixed CFL ratio and such that their asymptotic numerical error, observed in the practical tests, improves of $t^{-1/2}$ on the previous schemes. 

The plan of the paper is the following. In Section \ref{Sec2}, we
introduce our analytical framework.  The main schemes are derived   in
Section \ref{Sec3}, where we show how to improve the time decay of
their local truncation error. Sections \ref{sec_monot2x2} and
\ref{3x3case} are devoted to the monotonicity conditions for the new
scheme in the $2\times 2$ and  $3\times 3$ cases respectively. Then we
present some remarks in the linear $2\times 2$ case, to allow a direct  comparison with other schemes. Section \ref{Sec5} presents some numerical tests which show the nice behavior of our new schemes both in the linear and the nonlinear cases.

\section{The analytical framework}\label{Sec2}

Let us observe that when transforming system \eqref{sysdiag} in system \eqref{syscd1}, we can always assume that the block $D_{11}$ and $D_{12}$ have the special form 
$$D_{11}=I_{k}, \quad D_{12}=(I_k I_k \cdots I_k)\in \R^{k\times m_2},$$
and setting $\Lambda=diag(\lambda_1 I_k ,...,\lambda_m I_k)$, we have that
$$A=\left(\begin{array}{cc} A_{11}& A_{12} \\ A_{21}& A_{22}\end{array}\right)=D\Lambda D^{-1}.$$
Therefore, we can rewrite our system in a more compact  form:
\begin{equation}\label{syscd}
\partial_t Z + A\partial_x Z = -Z + DM(u).
\end{equation}

To guarantee the existence of the matrix $D$ in \eqref{matrixD}, we can assume that our system is strictly entropy dissipative in the sense of \cite{HaNa03} and verifies the Shizuta-Kawashima condition \cite{ShKa84,HaNa03,BHN}. For instance, following Bouchut \cite{Bou99},  we may assume the following sufficient condition.

{\bf Condition ED (Entropy Dissipation condition)} {\it There exists an open set $\Omega \subseteq \R^k$ and a strictly convex function $\eta=\eta(u): \Omega\to \R$, such that the matrix ${M'_i}(u)^T\, \eta''(u)$ is symmetric and strictly positive defined for all $u\in \Omega$ and $i=1,\dots,m$. }\\

Under this condition, and using the results in \cite{Bou99, HaNa03, BHN}, it is possible to prove the existence of a matrix $D$ in \eqref{matrixD}, with all the properties mentioned above. The details of the derivation of the actual coefficients of this matrix, which in general is not unique, but depends on the entropy dissipative function, can be found in the general case in \cite{BHN}, and are not relevant for the following discussion, even if of course they are  relevant  to derive the final schemes. However, in the examples presented below, the matrix $D$ is always explicitly given.

\subsection{Algebraic conditions}

 Let us state now some general properties for the matrix $D$ that we shall use in the following, and which are easily derived under {\bf condition  ED}.

First of all, since $A=D\Lambda D^{-1}$ in \eqref{syscd} is symmetric, we have that the matrix  $H:=D^TD$ and the diagonal matrix $\Lambda$ commute, i.e.
\begin{equation}\label{commute}
 D^TD\Lambda = \Lambda D^TD,
\end{equation}
So, for the generic $ij$ block element of the matrix $H$, we have that
$ (\lambda_i-\lambda_j)H_{ij}=0$, hence
\[ H_{ij}=0 \mbox{ for } i\ne j \mbox{ once } \lambda_i\ne \lambda_j.\]
Since $\Lambda$ is a block diagonal matrix with $m$ distinct eigenvalues with multiplicity egual to $k$, we get that $H$ is a block diagonal matrix of the following form
\begin{equation}\label{matrixH}
 H=D^TD= diag(h_1,...,h_m),\ h_i\in\R^{k\times k}, \ i=1,..,m.
\end{equation}
Moreover, the  matrix $H=D^T D$ is symmetric and invertible and  
we can find the following expression for $D^{-1}$, 
\begin{equation}\label{Dinv}
 D^{-1} = H^{-1} D^T = \left(\begin{array}{cc} H_1^{-1} & H_1^{-1}D_{21}^T \\ H_2^{-1}D_{12}^T & H_2^{-1}D_{22}^T\end{array}\right),
\end{equation}
where $H_1=h_1$ and $H_2=diag(h_2,..,h_m)$. Finally, using  \eqref{matrixH}, the following relations hold:
\begin{equation}\label{Drelaz}
	D_{12} = - D_{21}^T D_{22}, \quad D_{21}^TD_{21}=H_1-I_k, \quad D_{12}^TD_{12}+D_{22}^TD_{22} = H_2.
\end{equation}
There is another important relation involving the matrix $H$, which will be of use in the following. Let $A_{11}$ be the top-left block of the symmetric matrix $A$ in system \eqref{syscd}. Therefore: $A_{11}=\Lambda_1 H_1^{-1}+D_{12}\Lambda_2H_2^{-1}D_{12}^T$, i.e.
\begin{equation}\label{S}
A_{11}=\lambda_1 h_1^{-1}+\ \Big(I_k I_k \cdots I_k\Big)\  diag(\lambda_2 h_2^{-1},...,\lambda_m h_m^{-1})\ \left(\begin{array}{c} I_k \\ I_k \\ \vdots\\ I_k\end{array}\right)=\Sum_{i=1}^{m} \lambda_i h_i^{-1}.
\end{equation}

From now on, we shall consider each matrix $N\in\R^{km \times km}$ to
be decomposed into blocks as follows:
\begin{equation}\label{blockDim}
N=\left(\begin{array}{cc} N_{11}& N_{12} \\ N_{21}& N_{22}\end{array}\right), 
\end{equation}
with $N_{11}\in\R^{k\times k},\ N_{12}\in\R^{k\times m_2},\ N_{21}\in\R^{m_2 \times k}, N_{22}\in\R^{m_2\times m_2}$.
For the particular case of a block diagonal matrix
$N=diag(n_1,...,n_m)$, $n_i\in\R^{k\times k}$ for $i=1,...,m$, we
shall use the following notation
\begin{equation}\label{blockDiagDim}
N=diag(N_1,N_2), \quad N_1=n_1 \mbox{ and } N_2=diag(n_2,..,n_m).
\end{equation}
Moreover, for a generic vector $V\in\R^{km}$ we shall write
\begin{equation}\label{vectDecDim}
 V = (V_1 ,\tilde V ), \hbox{ with} \quad V_1\in\R^{k}, \quad \tilde V\in\R^{m_2}.
\end{equation}
\begin{example}\label{example2x2}

Consider the special case of the $2\times 2$  hyperbolic Jin-Xin relaxation system \cite{JX95, Na96}:
\begin{equation}\label{basic}
\left\{\begin{array}{l}
\partial_t u+ \partial_x v=0, \\
\\
\partial_t v+ \lambda^2 \partial_x u=F(u)-v,
\end{array}\right.
\end{equation}
for $\lambda>0$, where the unknowns $u$ and $v$ are scalar and the function $F=F(u)$ is smooth, with $F(0)=0$. 
This case is obtained from \eqref{sysdiag} for $k=1$, $m=2$, and  $\lambda_2=-\lambda_1=\lambda$, by setting 
\[ u=f^1+f^2, \ v=\lambda(f^2-f^1), \ F(u)=\lambda(M_2-M_1).\]

Recall that for  bounded initial data, and for $\lambda>\tilde M$, where $\tilde M$ is a positive constant which depends on $F$ and on the initial data, there exists a global bounded solution to  the Cauchy problem \eqref{basic}, see \cite{Na96}. Under the weaker condition
\begin{equation}\label{sub}
\lambda>|F'(0)|, 
\end{equation}
the problem is  dissipative in the sense of \cite{HaNa03} and the Shizuta-Kawashima condition is verified, at least for small values of $u$, and so, at least for smooth and small initial data, there exists again a global smooth solution to the same problem.  To obtain the time decay rates of these solutions,  we need to rewrite the problem in the {\em conservative--dissipative} coordinates. Assume \eqref{sub} and set $a=F'(0)$  so that the quantity $\mu=(\lambda^2-a^2)^{-1/2}$ is  real and positive. Setting $\tilde Z=\mu (v-a u)$, the new unknown $(u, \tilde Z)$ solves the problem in form \eqref{syscd1}:
\begin{equation}\label{cv_sys}
\left\{ \begin{array}{l}
\partial_t u+\partial_x (au+\frac{1}{\mu}\tilde Z)=0,\\
\\
\partial_t \tilde Z+\partial_x (\frac{u}{\mu}-a\tilde Z)=\mu(F(u)-au)-\tilde Z.
\end{array}\right.
\end{equation}
In this case the matrix $D$ is given by 
\[ D =  \left(\begin{array}{cc}
	1 & 1 \\
	-\mu a_+ & \mu a_-
	\end{array}\right),
\]
where  $a_\pm=\lambda\pm a>0$, from assumption \eqref{sub}.
\end{example}

\begin{example}\label{example3x3}
Let us now compute the conservative-dissipative form for the following $3\times 3$ BGK model, 
\begin{equation}\label{BGK3}
\left\{
\begin{array}{l}
\partial_t f_1 - \lambda \partial_x f_1= M_1(u)-f_1 , \\
\partial_t f_2=M_2(u)-f_2 , \\
\partial_t f_3 + \lambda \partial_x f_3=M_3(u)-f_3 .
\end{array}
\right.
\end{equation}
Let $F=F(u)$ be a smooth scalar function such that $F(0)=0$ and let $\gamma$ be such that 
$\gamma^\prime(u)=|F^\prime(u)|$, with $\gamma(0)=0$. We choose our three maxwellian functions as follows, for $\beta\in]0,1[$ and $\lambda>0$ 
\begin{equation}\label{m3}
\begin{array}{l}
M_1(u)=\frac{1}{2}\left( \frac{\gamma(u)-F(u)}{\lambda}+\beta u\right), \  M_3(u)=\frac{1}{2}\left( \frac{\gamma(u)+F(u)}{\lambda}+\beta u\right),\\
\\ 
M_2(u)=u-M_1(u)-M_3(u)=(1-\beta)u- \frac{\gamma(u)}{\lambda}.
\end{array}
\end{equation}
The functions $M_i$, $i=1,2,3$, are strictly increasing if for any $u$ under consideration
\begin{equation}\label{mono3}
\lambda > \frac{|F^\prime(u)|}{1-\beta},
\end{equation}
and so {\bf condition ED} is verified. Let $a=F^\prime(0)$ and $\alpha=|a|+\beta \lambda$, following the results in \cite{Bou99, HaNa03, BHN} it is possible to compute the matrix $D$ for the change of variables \eqref{changeVarGen} as
\[
D=\left(
\begin{array}{ccc}
1 & 1 & 1  \\ \\
\frac{\alpha+a}{\alpha-a}\sqrt{\frac{\lambda(\alpha-a)}{\alpha(\alpha+a)}} & 0 & -\sqrt{\frac{\lambda(\alpha-a)}{\alpha(\alpha+a)}} \\ \\
-\sqrt{\frac{\lambda-\alpha}{\alpha}} & -\sqrt{\frac{\lambda-\alpha}{\alpha}}+ \frac{\lambda}{\sqrt{\alpha(\lambda-\alpha)}} & -\sqrt{\frac{\lambda-\alpha}{\alpha}}
\end{array}
\right).
\]
\end{example}

\vskip 10pt

\subsection{Time Decay Properties }\label{decays}
Here we report some time decay results which have been mainly proved in \cite{BHN}, and which will be useful in the following. 
\begin{teo}\label{teo_decay_u}
Let $Z=(u, \tilde Z)$ be the local smooth  solution to the Cauchy
problem for system \eqref{syscd1}. Let
$E_s=\max{\{\|Z(0)\|_{L^1},\|Z(0)\|_{H^s}\}}$ a norm for the initial
data, which are taken in $H^k$ for $k$ large enough, and assume
$E_{2}$ small enough. Therefore, under the Condition ED, the solution
is global in time and the  following decay estimate holds, for all
$\beta \leq k$:
\begin{equation}\label{decay_u}
\| \bbeett  Z(t)\|_{L^\infty} \leq C \min{\{1,t^{-\frac{1}{2}-\frac{\bbtt}{2}}\}} E_{\bbtt+1},
\end{equation} 
with $C=C(E_{\bbtt+\sigma})$ for $\sigma$ large enough. For the dissipative part $\tilde Z$ we have, under the same conditions,  the more precise estimate
\begin{equation}\label{decay_z}
\| \bbeett  \tilde Z(t)\|_{L^\infty} \leq C \min{\{1,t^{-1-\frac{\bbtt}{2}}\}} E_{\bbtt+1},
\end{equation} 
for another constant  $C=C(E_{\bbtt+\sigma})$ as previously.
\end{teo}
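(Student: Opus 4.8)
The plan is to reduce the problem to known decay estimates for the linearized conservative–dissipative system and then absorb the quadratic source term $\tilde Q(u)$ by a bootstrap/continuation argument. First I would recall the structure of \eqref{syscd1}: the principal part is governed by the symmetric matrix $A$, and the dissipation acts only on $\tilde Z$, so the system is of the classical Shizuta–Kawashima dissipative hyperbolic type studied in \cite{HaNa03, BHN}. The starting point is the linear decay estimate for the semigroup $e^{tL}$ associated with $\partial_t Z + A\partial_x Z + \mathrm{diag}(0, I)Z = 0$: by Fourier analysis (splitting low and high frequencies, using that the dissipation together with the genuine coupling $A_{12}, A_{21} \neq 0$ produces an effective parabolic smoothing at low frequencies and an exponential decay at high frequencies), one gets, for $\beta$ derivatives,
\[
\|\partial_x^\beta e^{tL} Z_0\|_{L^\infty} \le C\,(1+t)^{-1/2-\beta/2}\big(\|Z_0\|_{L^1}+\|Z_0\|_{H^{s}}\big),
\]
and crucially the \emph{improved} bound on the $\tilde Z$-component,
\[
\|\partial_x^\beta (e^{tL} Z_0)_{\tilde Z}\|_{L^\infty} \le C\,(1+t)^{-1-\beta/2}\big(\|Z_0\|_{L^1}+\|Z_0\|_{H^{s}}\big),
\]
which reflects the fact that the dissipative variable carries one extra time-decay factor — this is exactly the point emphasized after \eqref{decaybase} in the excerpt and is the heart of the whole paper.

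Next I would set up the nonlinear argument via Duhamel's formula, $Z(t) = e^{tL}Z(0) + \int_0^t e^{(t-\tau)L}(0, \tilde Q(u(\tau)))^T\,d\tau$. Define the solution norm
\[
\mathcal{M}(t) := \sup_{0\le \tau \le t}\ \sum_{\beta \le k}\Big( (1+\tau)^{\frac12+\frac\beta2}\|\partial_x^\beta Z(\tau)\|_{L^\infty} + (1+\tau)^{1+\frac\beta2}\|\partial_x^\beta \tilde Z(\tau)\|_{L^\infty}\Big) + \text{(energy norms in }H^s),
\]
and also track $\|Z(\tau)\|_{L^1}$. Applying the linear estimates to Duhamel, the inhomogeneous term contributes $\int_0^t (1+t-\tau)^{-1/2-\beta/2}\,\|\tilde Q(u(\tau))\|_{L^1\cap H^s}\,d\tau$ to the $u$-component and $\int_0^t (1+t-\tau)^{-1-\beta/2}\,\|\tilde Q(u(\tau))\|_{L^1\cap H^s}\,d\tau$ to the $\tilde Z$-component. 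Since $\tilde Q(0)=0$ and $\tilde Q'(0)=0$, one has $|\tilde Q(u)| \lesssim |u|^2$, hence $\|\tilde Q(u(\tau))\|_{L^1} \lesssim \|u(\tau)\|_{L^2}^2$ and $\|\tilde Q(u(\tau))\|_{H^s}\lesssim \|u(\tau)\|_{L^\infty}\|u(\tau)\|_{H^s}$; using the decay $\|u(\tau)\|_{L^\infty}\lesssim (1+\tau)^{-1/2}\mathcal M(t)$ together with the uniform $L^2/H^s$-energy bound (also of size $\mathcal M(t)$), the source is controlled by $(1+\tau)^{-1/2}\mathcal M(t)^2$ or better. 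Plugging this into the two time-weighted integrals and carefully checking that $\int_0^t (1+t-\tau)^{-a}(1+\tau)^{-b}\,d\tau \lesssim (1+t)^{-\min(a,b,a+b-1)}$ yields exactly the claimed weights $(1+t)^{-1/2-\beta/2}$ and $(1+t)^{-1-\beta/2}$, so that $\mathcal M(t) \le C E_{\beta+1} + C\,\mathcal M(t)^2$. A standard continuity argument, combined with local well-posedness in $H^k$ and the smallness of $E_2$ (which controls $\mathcal M(0)$), then closes the bootstrap, giving global existence and the stated estimates \eqref{decay_u}–\eqref{decay_z}; the constants' dependence $C=C(E_{\beta+\sigma})$ appears because estimating the top $\beta$ derivatives of $\tilde Q(u)$ in $H^s$ requires slightly more regularity on $u$, borrowed from a higher energy norm.

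I expect the main obstacle to be the quadratic nonlinearity in the $\tilde Z$-equation interacting with the desired \emph{sharper} rate $(1+t)^{-1-\beta/2}$ for $\tilde Z$: one must verify that the convolution integral in Duhamel does not lose the extra half-power, i.e. that $\int_0^t (1+t-\tau)^{-1-\beta/2}(1+\tau)^{-1/2}\,d\tau$ really stays $O((1+t)^{-1-\beta/2})$ rather than degrading to $O((1+t)^{-1/2})$ or picking up a logarithm — this is where the $L^1$ control of $\tilde Q(u)$, which is $O((1+\tau)^{-1})$ rather than merely $O((1+\tau)^{-1/2})$ because it is \emph{quadratic} in the decaying variable $u$, is essential. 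The other technical point is handling the low-frequency part of the linear semigroup acting on a function that is only in $L^1$ (not $L^1$ with decaying weight), which forces the split of the Duhamel integral at $\tau = t/2$ and the use of the $L^1\to L^\infty$ versus $H^s\to L^\infty$ bounds on the two pieces separately. Since all of this is essentially carried out in \cite{BHN}, in the write-up I would state the linear estimates as a lemma, cite \cite{HaNa03,BHN} for their proof, and present only the Duhamel/bootstrap closure in detail.
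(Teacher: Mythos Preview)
The paper does not prove this theorem at all; it is stated as a result ``mainly proved in \cite{BHN}'' and is quoted without argument. Your proposal is essentially a sketch of the method of \cite{BHN} (linear Shizuta--Kawashima semigroup decay followed by a Duhamel/bootstrap closure using that $\tilde Q$ is quadratic), so in that sense it is aligned with what the paper invokes.

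There is, however, a genuine gap in your handling of what you correctly flag as the main obstacle. You assert that $\|\tilde Q(u(\tau))\|_{L^1}$ is $O((1+\tau)^{-1})$ ``because it is quadratic in the decaying variable $u$'', but in one space dimension $\|\tilde Q(u)\|_{L^1}\lesssim \|u\|_{L^2}^2$ and $\|u(\tau)\|_{L^2}$ decays only like $(1+\tau)^{-1/4}$, so this quantity is merely $O((1+\tau)^{-1/2})$. With that input the convolution $\int_0^t (1+t-\tau)^{-1}(1+\tau)^{-1/2}\,d\tau$ is of order $(1+t)^{-1/2}\log(1+t)$, not $(1+t)^{-1}$, and the bootstrap for the sharp $\tilde Z$-rate does not close as written. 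The way the extra half-power is actually recovered (see \cite{BHN}, and in a simplified setting Remark~\ref{remarkA0} of the present paper) is different: once the \emph{basic} rate $\|\partial_x^\beta Z\|_{L^\infty}\lesssim t^{-1/2-\beta/2}$ is established for all components, one goes back to the second line of \eqref{syscd1} and reads it as an ODE in $t$ for $\tilde Z$ with the exponential damping $-\tilde Z$ and source $-A_{21}\partial_x u - A_{22}\partial_x\tilde Z + \tilde Q(u)$. In $L^\infty$ this source already decays like $t^{-1}$ (since $\partial_x u \sim t^{-1}$ and $|\tilde Q(u)|\lesssim |u|^2\sim t^{-1}$ pointwise), and the scalar Duhamel integral $\int_0^t e^{-(t-s)}\min\{1,s^{-1}\}\,ds\lesssim \min\{1,t^{-1}\}$ then yields \eqref{decay_z} without any loss. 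So the improved $\tilde Z$-estimate comes from exploiting the \emph{pointwise} exponential relaxation in the dissipative block, not from the $L^1\to L^\infty$ smoothing of the full semigroup.
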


\begin{teo}\label{decay_t}
Under the assumptions of Theorem \ref{teo_decay_u}, we have the following decay estimates for the time derivatives of $Z$:
\begin{equation}\label{decay_ut}
\| \bbeett  Z_t(t)\|_{L^\infty} \leq C \min{\{1,t^{-1-\frac{\bbtt}{2}}\}} E_{\bbtt+2};
\end{equation} 
\begin{equation}\label{decay_zt}
\| \bbeett  \tilde Z_t(t)\|_{L^\infty} \leq C \min{\{1,t^{-\frac{3}{2}-\frac{\bbtt}{2}}\}} E_{\bbtt+3};
\end{equation} 
\begin{equation}\label{decay_utt}
\| \bbeett  Z_{tt}(t)\|_{L^\infty} \leq C \min{\{1,t^{-\frac{3}{2}-\frac{\bbtt}{2}}\}} E_{\bbtt+3};
\end{equation} 
with $C=C(E_{\bbtt+\sigma})$ for $\sigma$ large enough.
\end{teo}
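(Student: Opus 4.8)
The plan is to reduce everything to the spatial decay estimates \eqref{decay_u}--\eqref{decay_z} of Theorem \ref{teo_decay_u}, by using the system \eqref{syscd1} itself to trade each time derivative for one extra spatial derivative (modulo quadratic and purely dissipative remainders). For \eqref{decay_ut} this is immediate: from the first equation of \eqref{syscd1}, $\partial_t u=-A_{11}\partial_x u-A_{12}\partial_x\tilde Z$, so applying $\partial_x^\beta$ and \eqref{decay_u}--\eqref{decay_z} at order $\beta+1$ the term $\partial_x^{\beta+1}u$ dominates and gives $\|\partial_x^\beta\partial_t u\|_{L^\infty}\le C\min\{1,t^{-1-\beta/2}\}E_{\beta+2}$; from the second equation, $\partial_t\tilde Z=\tilde Q(u)-\tilde Z-A_{21}\partial_x u-A_{22}\partial_x\tilde Z$, where $\tilde Z$ and $A_{21}\partial_x u$ are again $O(t^{-1-\beta/2})$, $A_{22}\partial_x\tilde Z$ is faster, and $\partial_x^\beta\tilde Q(u)$ is $O(t^{-1-\beta/2})$ with a quadratic factor of $E$ by a Moser/Leibniz estimate (using $\tilde Q(0)=\tilde Q'(0)=0$). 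This proves \eqref{decay_ut}.

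The heart of the matter is \eqref{decay_zt}: the bound just found for $\partial_t\tilde Z$ is not sharp, because $\tilde Q(u)$, $\tilde Z$ and $A_{21}\partial_x u$ each decay only like $t^{-1}$, so the improvement to $t^{-3/2}$ must come from a cancellation among them. I would introduce the distance from the local (``parabolic'') equilibrium
\[ \tilde W:=\tilde Z-\tilde Q(u)+A_{21}\partial_x u, \]
check from the second equation of \eqref{syscd1} that $\partial_t\tilde Z=-\tilde W-A_{22}\partial_x\tilde Z$, and that, differentiating in $t$, $\tilde W$ solves the linear ODE in $t$ (pointwise in $x$) $\partial_t\tilde W+\tilde W=G$ with $G:=-A_{22}\partial_x\tilde Z-\tilde Q'(u)\partial_t u+A_{21}\partial_x\partial_t u$. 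Using \eqref{decay_z} and \eqref{decay_ut} at order $\beta+1$, together with a Leibniz estimate for $\tilde Q'(u)\partial_t u$ (using $\tilde Q'(0)=0$, \eqref{decay_u} and \eqref{decay_ut}), one gets $\|\partial_x^\beta G(t)\|_{L^\infty}\le C\min\{1,t^{-3/2-\beta/2}\}E_{\beta+3}$. Solving by Duhamel,
\[ \partial_x^\beta\tilde W(t)=e^{-t}\partial_x^\beta\tilde W(0)+\int_0^t e^{-(t-s)}\,\partial_x^\beta G(s)\,ds, \]
and noting that $\partial_x^\beta\tilde W(0)$ is bounded (Sobolev embedding and the quadratic structure of $\tilde Q$) while $e^{-t}$ beats any polynomial, the elementary estimate $\int_0^t e^{-(t-s)}\min\{1,s^{-\gamma}\}\,ds\le C\min\{1,t^{-\gamma}\}$ $(\gamma>0)$ gives $\|\partial_x^\beta\tilde W(t)\|_{L^\infty}\le C\min\{1,t^{-3/2-\beta/2}\}E_{\beta+3}$; since $A_{22}\partial_x^{\beta+1}\tilde Z$ is of the same order by \eqref{decay_z}, the identity $\partial_t\tilde Z=-\tilde W-A_{22}\partial_x\tilde Z$ yields \eqref{decay_zt}.

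Finally, for \eqref{decay_utt} I would differentiate the equations of \eqref{syscd1} once more in $t$ and substitute $\partial_t u$, $\partial_t\tilde Z$ back in. Then $\partial_t^2 u$ becomes a linear combination of $\partial_x^2 u$, $\partial_x^2\tilde Z$, $\partial_x\tilde Z$ and the quadratic term $\tilde Q'(u)\partial_x u$, each $O(t^{-3/2-\beta/2})$ after $\partial_x^\beta$ by \eqref{decay_u}--\eqref{decay_z}; in $\partial_t^2\tilde Z$ the same computation yields, besides terms of that kind, the combination decaying only like $t^{-1}$, which is nothing but $-\partial_t\tilde Z$ up to faster terms and is therefore $O(t^{-3/2-\beta/2})$ by the already-proved \eqref{decay_zt}. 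Hence \eqref{decay_utt}. I expect the cancellation step in \eqref{decay_zt} to be the real obstacle --- one has to recognise that $\tilde Z$ lies on the manifold $\tilde Q(u)-A_{21}\partial_x u$ up to a remainder that decays faster by one power of $t^{-1/2}$, which is precisely the mechanism behind the improved $t^{-1}$ decay of $\tilde Z$ in Theorem \ref{teo_decay_u} --- whereas keeping exact track of how many spatial derivatives of the initial datum are consumed at each reduction is a matter of bookkeeping that can be matched against the Green's function and energy estimates of \cite{BHN}.
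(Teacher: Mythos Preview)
Your argument is correct. For \eqref{decay_utt} your approach and the paper's coincide: both differentiate \eqref{syscd1} in time and feed in the already available bounds \eqref{decay_u}--\eqref{decay_zt}; the only cosmetic difference is that the paper keeps the term $-\partial_t\tilde Z$ in $\partial_{tt}\tilde Z$ and bounds it directly by \eqref{decay_zt}, instead of substituting and then re-recognising it as you do. The genuine difference is in scope: the paper does not prove \eqref{decay_ut} and \eqref{decay_zt} here at all, but simply cites \cite{BHN}, whereas you supply self-contained arguments. Your Duhamel computation for the auxiliary variable $\tilde W=\tilde Z-\tilde Q(u)+A_{21}\partial_x u$, solving $\partial_t\tilde W+\tilde W=G$ with $G=O(t^{-3/2-\beta/2})$, is exactly the ``distance to local equilibrium'' mechanism that produces the extra $t^{-1/2}$ gain, and is the natural way to recover what \cite{BHN} provides; it has the advantage of making the cancellation explicit rather than hidden behind a citation.
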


\begin{proof}[Proof of Theorem \eqref{decay_t}]
We have only to prove inequality \eqref{decay_utt}, since   the other ones are proved in \cite{BHN}. First we have that
\begin{equation}\label{decay_uttbis}
\partial_{tt} u =  A_{11}^2\partial_{xx} u + A_{11} A_{22} \partial_{xx} \tilde \Zb - A_{12} \partial_{tx} \tilde \Zb,
\end{equation}
and so, using inequalities  \eqref{decay_z}, \eqref{decay_ut} and \eqref{decay_zt}, we have \eqref{decay_utt} for the conservative part. Now, using the second part of system \eqref{syscd1}, we have 
\begin{equation}\label{decay_ztt}
\begin{array}{lcl}
\partial_{tt} \tilde \Zb &=& - A_{21}\partial_{tx}u-A_{22}\partial_{tx}\tilde \Zb+ \tilde Q'(u)\partial_{t}u-\partial_t \tilde\Zb,
\end{array}
\end{equation}
which yields the proof, using that the term $\tilde Q(u)$ is at least quadratic in $u$.
\end{proof}

\begin{remark}\label{remarkA0} Consider the special case of system \eqref{syscd1} when $ A_{11}=0$. We can improve the decay of the time derivative of the unknowns. Actually
\[\|  u_t(t)\|_{L^\infty} =\|  A_{12}\partial_x \tilde \Zb\|_{L^\infty} \leq C \min{\{1,t^{-\frac{3}{2}}\}} E_{ 3}.\]
Then, we  have
\begin{equation}\label{decay_utt_a0}
\|   \partial_{tt} u (t)\|_{L^\infty} =\|    A_{12} \partial_{tx} \tilde \Zb\|_{L^\infty} \leq C \min{\{1,t^{-2}\}} E_{4},
\end{equation}
\begin{equation}\label{decay_uxt_a0}
\|   \partial_{tx} u(t)\|_{L^\infty} =\|  A_{12} \partial_{xx} \tilde \Zb\|_{L^\infty} \leq C \min{\{1,t^{-2}\}} E_{4}.
\end{equation}
Finally, for the second derivative of the dissipative part, we have
\begin{equation}\label{ztta0}
\begin{array}{lcl}
\partial_{tt} \tilde \Zb &=& - A_{21}\partial_{tx}u-A_{22}\partial_{tx}\tilde \Zb+ \tilde Q'(u)\partial_{t}u-\partial_t \tilde\Zb.
\end{array}
\end{equation}
Therefore, we can apply the Duhamel's formula to obtain
$$\partial_t \tilde \Zb = e^{-t}\partial_t \tilde\Zb_0 -\Int_0^t
e^{-(t-s)}\left(A_{21}\partial_{tx}u(s)+A_{22}\partial_{tx}\tilde\Zb(s)+
  Q'(u) \partial_tu(s)\right)ds.$$
Now, since the function $Q$ is quadratic in $u$ and using the previous estimates, we find that
\[
\|   \partial_{t}\tilde \Zb \|_{L^\infty} \leq C \left(e^{-t}+\Int_0^t e^{-(t-s)} \min{\{1,s^{-2}\}} ds\right) \leq C \min{\{1,t^{-2}\}}.
\]
By plugging this last inequality in \eqref{ztta0} we obtain 
\begin{equation}\label{decay_ztt_a0}
\|  \partial_{tt} \tilde \Zb \|_{L^\infty}+\|  \partial_{t} \tilde \Zb \|_{L^\infty}\leq C \min{\{1,t^{-2}\}}.
\end{equation}
\end{remark}

\section{The Numerical Approximation}\label{Sec3}
In this section we first introduce general finite difference approximations for  
system (\ref{sysdiag}). Then, we compute the local truncation error of these schemes and we discuss 
its decays properties. The main result is given in Theorem \ref{prop_aho_gen}, where a class of TAHO schemes is fully  characterized. First, we approximate the differential part following 
the direction of the characteristic velocities, so  we
study  the methods for the system in diagonal form
(\ref{sysdiag}).

 We denote by $f=(f^1,...,f^m)$ the exact solution. Let  $\Dx$ the uniform mesh-length  and  $x_j=j\,\Dx$   the spatial grid points for all $j\in \Z$. The time levels $t_n$, with $t_0=0$, are also spaced uniformly with mesh-length $\Dt=t_{n+1}-t_n$
for $n\in\N$. We denote by $\rho$ the CFL ratio $\rho=\Dt/\Dx$, which is 
taken constant through all the paper. 

We consider the Cauchy problem for system 
(\ref{sysdiag}) possibly subjected to some stability conditions. The initial data $ f^0$ is supposed to be smooth
and  approximated by its node values.
The approximate solution $(f^1_{j,n},...,f^m_{j,n})^T$, $f^i_{j,n}\in\R^{m_1}$, $i=1,...,m$, for $j\in\Z$ and $n\in\N$, is given by
\begin{equation}\label{numdiag}
\begin{array}{l}
\Frac{f^i_{j,n+1}-f^i_{j,n}}{\Dt}+\Frac{\lambda_i}{2\Dx}\left(f^i_{j+1,n}-f^i_{j-1,n}\right)-\Frac{q_i}{2\Dx}\delta_x^2 f^i_{j,n}\\ \\\qquad=\Sum_{l=-1,0,1}\left(\Bcal^i_{l}(u_{j+l,n})-\beta^i_l f^i_{j+l,n}\right),
\\ \\
f^i_{j,0}= \fb^i_0(x_j), \quad  j\in\Z,
\end{array}
\end{equation} 
where $\delta_x^2 f_{j,n}=(f_{j+1,n}-2 f_{j,n}+f_{j+1,n})$, for all $i=1,...,m$. The artificial diffusion terms
$q_i$ are diagonal matrices in $\R^{{m_1}\times {m_1}}_+$. The source term approximation is defined, for $l=-1,0,1$, by 
the diagonal matrices $\beta^i_l\in\R^{{m_1}\times {m_1}}$ and by the vectors of functions $\Bcal^i_l(\cdot)\in\R^{m_1}$.

We assume the scheme (\ref{numdiag}) is consistent with system (\ref{sysdiag}),
i.e, for all $i=1,...,m$ 
\begin{equation}\label{consistency}
\begin{array}{l}
\beta^i_{-1}+\beta^i_{0}+\beta^i_{1}=I_{m_1}+\Dx C^i,\\
\\
\Bcal^i_{-1}(u)+\Bcal^i_0(u)+\Bcal^i_1(u) = M_i(u)+\Dx \Ccal_i(u),
\end{array}
\end{equation}
where $C^i =diag{(c^i_1,...,c^i_{m_1})}\in\R^{m_1\times m_1}$ and $\Ccal_i(u)$ are $m_1$ functions to be defined.

By applying the change of variables \eqref{changeVarGen}, the scheme applies to the system in the conservative-dissipative form \eqref{syscd1}, and it reads
\begin{equation}\label{numcd}
\begin{array}{l}
\Frac{Z_{j,n+1}- Z_{j,n}}{\Dt}+\Frac{A}{2\Dx}\left(Z_{j+1,n}-Z_{j-1,n}\right)
-\Frac{\bar Q}{2\Dx}\delta_x^2 Z_{j,n}\\
\\
=\Sum_{l=-1,0,1}\Big(\bar \Bcal_{l}(u_{j+l,n})- \bar b_l Z_{j+l,n}\Big),
\end{array}
\end{equation}
where $Q=diag(q_1,...,q_m)$, $\bar Q = D Q D^{-1}$ and for $l=-1,0,1$, 
\[
\begin{array}{l}
b_l =diag(\beta^{1,1}_l,...,\beta^{1,k}_l,...,\beta^{m,1}_l,...,\beta^{m,k}_l)\in\R^{km\times km},\quad  
\bar b_l=D b_l D^{-1}\in \R^{km\times km},\\
\\
\Bcal_l(u) = (\Bcal^{1,1}_l(u),...,\Bcal^{1,k}_l(u),...,\Bcal^{m,1}_l(u),...,\Bcal^{m,k}(u))^T\in\R^{km}, \\
\bar \Bcal_l =D\Bcal_l(u) \in \R^{km}.
\end{array}
\]
By separating the system with respect to the two variables $u$ and $\tilde Z$, we get
\begin{equation}\label{numcd_uz}
\begin{array}{l}
\Frac{u_{j,n+1}-u_{j,n}}{\Dt}+\Frac{A_{11}}{2\Dx}\left(u_{j+1,n}-u_{j-1,n}\right)+\Frac{A_{12}}{2\Dx}\left(\tilde Z_{j+1,n}-\tilde Z_{j-1,n}\right) \\
\\
-\Frac{\bar Q_{11}}{2\Dx}\delta_x^2 u_{j,n}-\Frac{\bar Q_{12}}{2\Dx}\delta_x^2 \tilde Z_{j,n}=\Sum_{l=-1,0,1}\left(\bar\Bcal^{1}_{l}(u_{j+l,n})-\bar b^{11}_l u_{j+l,n}-\bar b^{12}_l \tilde Z_{j+l,n}\right), \\
\\
\Frac{\tilde Z_{j,n+1}-\tilde Z_{j,n}}{\Dt}+\Frac{A_{21}}{2\Dx}\left(u_{j+1,n}-u_{j-1,n}\right)
+\Frac{A_{22}}{2\Dx}\left(\tilde Z_{j+1,n}-\tilde Z_{j-1,n}\right)\\
\\
-\Frac{\bar Q_{21}}{2\Dx}\delta_x^2 \tilde u_{j,n}-\Frac{\bar Q_{22}}{2\Dx}\delta_x^2 \tilde Z_{j,n}=\Sum_{l=-1,0,1}\Big(\widetilde{\bar\Bcal_{l}}(u_{j+l,n})-\bar b^{21}_l u_{j+l,n}-\bar b^{22}_l \tilde Z_{j+l,n}\Big).
\end{array}
\end{equation}
where, for $l=-1,0,1$, vector $\bar \Bcal_l(u) = (\bar\Bcal_l^1(u),\widetilde{\bar\Bcal_l}(u))^T$, follows the notation given in \eqref{vectDecDim} and the block decomposition of matrices $\bar b_l$ and  $\bar Q$ follows the definition given in \eqref{blockDim}.

\subsection{Decay properties of the local truncation error}\label{sec_decayTronc_gen}
In this section we  focus on the local truncation error for the general scheme \eqref{numcd}.
By applying the time decay properties given in Section \ref{decays}, we will show how it is possible to 
build up numerical schemes which are more accurate for large times.

Set, for $i=1,...,m$, 
\begin{equation}\label{scheme_param}
\begin{array}{l}
C = diag(C^i), \quad \bar C = D C D^{-1}, \quad \Ccal(u)= (\Ccal_i(u))^T, \quad \gamma^i = (\beta^i_1 -\beta^i_{-1}),\\
\\
\bar G = (\bar b_1 - \bar b_{-1}), \quad \Gamma = (\Bcal_1(u)-\Bcal_{-1}(u)), \quad \bar \Gamma = (\bar\Bcal_1(u)-\bar\Bcal_{-1}(u)).
\end{array}
\end{equation}
For $i=1,..,m$, we have
$$
\begin{array}{l}
\bar \Bcal_l(u_{j+l,n})=\bar \Bcal_l(u_{j,n})+l\, \Dx\, \bar\Bcal_l'(u_{j,n}) \partial_x u (x_j,t_n)+O(\Dx^2), 
\end{array}
$$
where $ \bar\Bcal_{l}'\in\R^{km\times k} $ is the Jacobian matrix of $\bar \Bcal_l$. Using the Taylor expansion and the consistency property \eqref{consistency},
the local truncation error for the scheme \eqref{numcd_uz} becomes
\begin{equation}\label{Tu_gen}
\begin{array}{ll}
\Tcal_\ub =& \Frac{\Dt}{2} \partial_{tt} \ub - \Dx \Frac{\bar Q_{11}}{2}\partial_{xx} \ub - \Dx \Frac{\bar Q_{12}}{2}\partial_{xx} \tilde\Zb\\ \\
&+ \Dx \Big[ T^u_0(\ub) + T^u_1 \partial_x \ub + S^u_0 \tilde \Zb +S^u_1 \partial_x\tilde Z\Big] + O(\Dt^2 +\Dx^2),
\end{array}
\end{equation}
and
\begin{equation}\label{Tz_gen}
\begin{array}{ll}
\Tcal_\zb= &\Frac{\Dt}{2} \partial_{tt}\tilde Z -\Dx \Frac{\bar Q_{21}}{2}\partial_{xx} \ub - \Dx \Frac{\bar Q_{22}}{2}\partial_{xx} \tilde\Zb\\ \\
&+\Dx \Big[ T^z_0(\ub) + T^z_1\partial_x\ub+S^z_0\tilde\Zb+S^z_1\partial_x\tilde\Zb\Big]+ O(\Dt^2 +\Dx^2),
\end{array}
\end{equation}
where
\begin{equation}
\begin{array}{cc}
T^u_0(\ub)= -\left(D_{11}\Ccal^1(\ub)+D_{12}\tilde\Ccal(\ub)-\bar C_{11} \ub\right) \in\R^{k}, &

T^u_1 = -\bar\Gamma_1' + \bar G_{11}\in \R^{k\times k},\\
\\
T^z_0(\ub) = -(D_{21}\Ccal^1(\ub)+D_{22}\tilde\Ccal(\ub)-\bar C_{21} \ub)\in\R^{m_2},
&
T^z_1 = (- \widetilde{\bar\Gamma}' +\bar G_{21})\in\R^{m_2\times k},\\
\\
S^u_0 = \bar C_{12}\in\R^{k\times m_2},
&
S^u_1 = \bar G_{12}\in\R^{k\times m_2},\\
\\

S^z_0 = \bar C_{22}\in\R^{m_2\times m_2},
&
S^z_1 = \bar G_{22}\in\R^{m_2\times m_2},
\end{array}
\end{equation}
where $\widetilde{\bar{\Gamma}}'$ is the $ m_2\times k$ jacobian matrix of $\tilde{\bar\Gamma}$. Clearly,  the scheme  \eqref{numcd} is at least consistent, which means it is formally of order $O(\Dx+\Dt)$. Taking $\Gamma_i=0$, $\gamma^i=0$, $C^i=0$ and $\Ccal_i=0$, for $i=1=,...,m$, we get the standard upwind scheme with the pointwise approximation for the source term and the local truncation error is just given by
$$\begin{array}{l}
\Tcal_\ub(x,t)=\Frac{\Dt}{2} \partial_{tt} \ub - \Dx \Frac{\bar Q_{11}}{2}\partial_{xx} \ub - \Dx \Frac{\bar Q_{12}}{2}\partial_{xx} \tilde\Zb + O(\Dx^2 +\Dt^2), \\
\\ 
\Tcal_\zb(x,t)=\Frac{\Dt}{2} \partial_{tt}\tilde Z -\Dx \Frac{\bar Q_{21}}{2}\partial_{xx} \ub - \Dx \Frac{\bar Q_{22}}{2}\partial_{xx} \tilde\Zb+ O(\Dx^2 + \Dt^2).
\end{array}$$

Using the time decay estimates in Theorems \ref{teo_decay_u} and \ref{decay_t}, we obtain for a general approximation the following estimates for the local truncation error, as $t\rightarrow +\infty$,
$$\begin{array}{cc}
\Tcal_\ub(x,t)=   O(\Dx\ t^{-3/2}) +  O(\Dt \ t^{-3/2}), & 
\Tcal_\zb(x,t)= O(\Dx\ t^{-3/2})+  O(\Dt \ t^{-3/2}).
\end{array}$$

Starting from the general scheme \eqref{numcd}, we would like to improve the decay property of this local truncation error to build up more accurate numerical schemes. 
The main idea is to chose the free parameters of the scheme to delete  the terms that decay more slowly in \eqref{Tu_gen}-\eqref{Tz_gen}, i.e. the terms which decays as $t^{-3/2}$.

Let   $g_i=diag(\gamma_{(i-1)m_1+1},...,\gamma_{i m_1})$ for $i=1,...,m$ and $G=diag(g_1,...,g_m)$.

\begin{teo}[Local Truncation Error]\label{prop_aho_gen}
Let $\Dt/\Dx = \rho$ be fixed and let $H=diag(h_1,\dots,h_m)$ be the block diagonal matrix given in \eqref{matrixH}. Recall that, by \eqref{S}, $ A_{11} = \sum_{i=1}^{m} \lambda_i h_i^{-1}$, and set 
$\quad P=\sum_{i=1}^{m} \lambda_i^2 h_i^{-1}.$
Assume $A_{11}\ne 0$ and that the following condition holds:
\begin{equation}\label{invert}
\hbox{the matrix } (\lambda_i I_k-A_{11}) \hbox{ is invertible for }i=1,...,m.
\end{equation}
If  we make the following choice for the coefficients of the scheme \eqref{numdiag}, 
\begin{equation}\label{coeffs_C}
 C = -\Frac{\rho}{2}I_{km}, \quad \Ccal = C M(u) = -\Frac{\rho}{2} M(u),
\end{equation}
\begin{equation}\label{coeffs_G}
g_i = -\left(\Frac{1}{2} q_i h_i^{-1} -\Frac{\rho}{2}h_i^{-1}\left(P-(\lambda_i I_k -A_{11})^2\right)\right)(\lambda_i I_k-A_{11})^{-1}h_i
\end{equation}
and
\begin{equation}\label{coeffs_Gamma}
\Gamma_i'(u) =  g_i M'_i(u) +\Frac{\rho}{2} \left((h_i^{-1}- M_i'(u))A_{11}+\lambda_i M_i'(u)-h_i^{-1}\Sum_{j=1}^m \lambda_j M'_j(u)\right),
\end{equation}
both for $i=1,...,m$, then the local truncation error of the scheme \eqref{numdiag} decays as 
\begin{equation}\label{TAHO_TD_gen}
\begin{array}{cc}
\Tcal_u(x,t)=   O\left( \Dx\ t^{-2}\right)+O\left( \Dx^2\ t^{-3/2}\right), & 
\Tcal_z(x,t)= O\left(\Dx\ t^{-2}\right)+O\left( \Dx^2\ t^{-3/2}\right).
\end{array}
\end{equation}
\end{teo}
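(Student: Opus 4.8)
The result is established by a direct verification: one inserts the prescribed coefficients \eqref{coeffs_C}, \eqref{coeffs_G}, \eqref{coeffs_Gamma} into the truncation-error expansions \eqref{Tu_gen}--\eqref{Tz_gen} and checks, using the decay estimates of Theorems \ref{teo_decay_u} and \ref{decay_t}, that every contribution that would decay only like $\Dx\,t^{-3/2}$ (or slower) is annihilated, so that what survives is $O(\Dx\,t^{-2})$ while the remainder $O(\Dx^2+\Dt^2)$ turns out to be $O(\Dx^2 t^{-3/2})$. The essential point is to carry out all the estimates \emph{after} rewriting the truncation error purely in terms of spatial derivatives of the solution, using the equations \eqref{syscd1}: this is where the sharp rates $\ub=O(t^{-1/2})$, $\partial_x\ub=O(t^{-1})$, $\partial_{xx}\ub=O(t^{-3/2})$, $\tilde\Zb=O(t^{-1})$, $\partial_x\tilde\Zb=O(t^{-3/2})$, $\partial_{xx}\tilde\Zb=O(t^{-2})$, and $\partial_{tx}\tilde\Zb=O(t^{-2})$ from \eqref{decay_zt}, become available and give each monomial of the reduced truncation error its correct order.

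First I would do the \emph{kinematic reduction}. From the first equation of \eqref{syscd1}, $\partial_t\ub=-A_{11}\partial_x\ub-A_{12}\partial_x\tilde\Zb$, hence $\partial_{tx}\ub=-A_{11}\partial_{xx}\ub-A_{12}\partial_{xx}\tilde\Zb$ and, iterating and using \eqref{decay_z}, $\partial_{tt}\ub=A_{11}^2\partial_{xx}\ub+O(t^{-2})$ (this is \eqref{decay_uttbis}, with $\partial_{tx}\tilde\Zb$ left untouched since it is already $O(t^{-2})$); from the second, $\tilde Q(\ub)-\tilde\Zb=\partial_t\tilde\Zb+A_{21}\partial_x\ub+A_{22}\partial_x\tilde\Zb$ and $\partial_{tt}\tilde\Zb+\partial_t\tilde\Zb=-A_{21}\partial_{tx}\ub-A_{22}\partial_{tx}\tilde\Zb+\tilde Q'(\ub)\partial_t\ub$, i.e.\ \eqref{decay_ztt}. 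Substituting these into \eqref{Tu_gen}--\eqref{Tz_gen} and collecting by decay rate, $\Dx^{-1}\Tcal_\ub$ and $\Dx^{-1}\Tcal_\zb$ become, up to $O(t^{-2})$, explicit linear combinations of $\ub$, $\tilde\Zb$, $\partial_x\ub$, $\partial_{xx}\ub$, $\partial_x\tilde\Zb$ and of the quadratic term $\tilde Q'(\ub)\,\partial_x\ub$, with coefficients built out of $A_{11},A_{12},A_{21},A_{22}$, $\bar Q$, $\bar G$ and $\bar\Gamma'$ — i.e.\ out of the scheme parameters conjugated by $D$. It then suffices to make the slowly-decaying part of each of these two combinations vanish.

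Next comes the \emph{algebraic core}. Using $\sum_iM_i(u)=u$ (hence $\sum_iM_i'(u)=I_k$), the normalisations $D_{11}=I_k$, $D_{12}=(I_k\cdots I_k)$, the identity $\sum_ih_i^{-1}=I_k$ (which follows from $(DD^{-1})_{11}=I_k$), and $\bar C=DCD^{-1}=-\tfrac{\rho}{2}I$, the choice \eqref{coeffs_C} annihilates the $\ub$- and $\tilde\Zb$-coefficients of $\Tcal_\ub$ ($T^u_0(\ub)\equiv0$, $S^u_0=\bar C_{12}=0$) and, in $\Tcal_\zb$, turns the slowly-decaying pair $T^z_0(\ub)+S^z_0\tilde\Zb$ into $\tfrac{\rho}{2}(\tilde Q(\ub)-\tilde\Zb)$, whose $\Dx$-multiple is, by \eqref{syscd1}, exactly the $\tfrac{\Dt}{2}(\partial_t\tilde\Zb+A_{21}\partial_x\ub+A_{22}\partial_x\tilde\Zb)$ needed to close $\tfrac{\Dt}{2}\partial_{tt}\tilde\Zb$ via \eqref{decay_ztt}. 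What then remains in $\Dx^{-1}\Tcal_\ub$ and $\Dx^{-1}\Tcal_\zb$ are the coefficients of $\partial_{xx}\ub$, $\partial_x\ub$, $\partial_x\tilde\Zb$ (and, in $\Tcal_\zb$, of $\tilde Q'(\ub)\partial_x\ub$); imposing their vanishing, converting every block product into the diagonal $h_i$-calculus through \eqref{Dinv} (so that $\bar G_{11}=\sum_ig_ih_i^{-1}$, $\bar Q_{11}=\sum_iq_ih_i^{-1}$, and the off-diagonal blocks are handled via $D_{21},D_{22}$), and using \eqref{commute}, \eqref{matrixH}, \eqref{Drelaz}, $A_{11}=\sum_i\lambda_ih_i^{-1}$ (eq.\ \eqref{S}) and $P=\sum_i\lambda_i^2h_i^{-1}$, one solves the resulting equations for $g_1,\dots,g_m$ — here the invertibility hypothesis \eqref{invert} is what makes the inversion legitimate — and recovers precisely \eqref{coeffs_G}; feeding \eqref{coeffs_G} back into the $\partial_x\ub$-equations and absorbing the quadratic correction carried by $\tilde Q'(\ub)$ (which is what forces the $M_i'(u)$-dependent term) yields \eqref{coeffs_Gamma}. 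With all these coefficients gone, $\Tcal_\ub=O(\Dx\,t^{-2})$ and $\Tcal_\zb=O(\Dx\,t^{-2})$; and the Taylor remainder $O(\Dx^2+\Dt^2)$, which is built solely from $\Dx^2\partial_{xxx}$-, $\Dt^2\partial_{ttt}$- and $\Dx^2\partial_{xx}$-type contributions of the solution (the last from the second-order terms of the stencils), is $O(\Dx^2 t^{-3/2})$ since its slowest ingredient $\partial_{xx}\ub$ is $O(t^{-3/2})$. This proves \eqref{TAHO_TD_gen}.

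I expect the algebraic core to be the main obstacle. The conservative and dissipative blocks are coupled through the off-diagonal blocks $\bar G_{12}$, $\bar G_{21}$, $\bar Q_{12}$, $\bar Q_{21}$, so $g_1,\dots,g_m$ must be determined simultaneously from both $\Tcal_\ub$ and $\Tcal_\zb$; the $u$-dependence of the Jacobians $M_i'(\ub)$ and of $\tilde Q'(\ub)$ has to be transported through the whole computation, which is precisely why \eqref{coeffs_Gamma} is a $u$-dependent and not a constant prescription; and the expansion has to be organised strictly by decay order — reducing $\partial_{tx}\ub$ via the equations but leaving $\partial_{tx}\tilde\Zb$ alone — so that no spurious $t^{-3/2}$ term is reintroduced. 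The identities $\sum_ih_i^{-1}=I_k$, $\sum_iM_i'(u)=I_k$, $A_{11}=\sum_i\lambda_ih_i^{-1}$ and $P=\sum_i\lambda_i^2h_i^{-1}$ are what make the system close. The two preliminary steps — the decay classification and the kinematic reduction via \eqref{syscd1} — are routine.
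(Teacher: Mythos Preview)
Your proposal is correct and follows the same route as the paper: reduce the truncation error via \eqref{syscd1}, classify each term by its decay rate, annihilate the $O(t^{-3/2})$ coefficients, and solve the resulting block-algebraic system in the diagonal $h_i$-variables using \eqref{Dinv}--\eqref{S}. One refinement worth noting: the coefficient of $\partial_x\tilde\Zb$ (namely $S^u_1=\bar G_{12}$, $S^z_1=\bar G_{22}$) is \emph{not} set to zero---doing so would over-constrain the system---but is instead eliminated by substituting the $x$-derivative of the dissipative equation, $\partial_x\tilde\Zb=\tilde Q'(\ub)\partial_x\ub-A_{21}\partial_{xx}\ub-A_{22}\partial_{xx}\tilde\Zb-\partial_{tx}\tilde\Zb$, so that $S^u_1,S^z_1$ feed into the $\partial_x\ub$- and $\partial_{xx}\ub$-conditions (this is the origin of the $S^u_1\tilde Q'(\ub)$ and $S^u_1A_{21}$ terms in the paper's equations \eqref{c3-c4}--\eqref{c6-c7}), while the residual $\partial_{tx}\tilde\Zb$- and $\partial_{xx}\tilde\Zb$-terms are already $O(t^{-2})$ and survive harmlessly.
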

\begin{proof}
Set
\begin{equation}\label{tildeTerms}
\begin{array}{l}
\tilde T_0 = -\tilde Q(\ub),\\
\tilde T_1 = -(A_{22}(\tilde Q'(\ub))+(\tilde Q'(\ub)) A_{11}-A_{21}),\
\tilde T_2 = A_{21}A_{11}+A_{22}A_{21}, \\
\tilde S_1 = 2 A_{22} -(\tilde Q'(\ub))A_{12}, \ 
\tilde S_2 = A_{21}A_{12}+A_{22}^2.
\end{array}
\end{equation} 
Replacing these expressions in \eqref{Tu_gen}-\eqref{Tz_gen}, and using the structure of the system, yields 
\begin{equation}\label{TuTz_proof}
\begin{array}{ll}
\Tcal_u &= \Dx\Big[ T^u_0(\ub) + \left(T^u_1+S^u_1(\tilde Q'(\ub))\right)\, \partial_x \ub + \left(\Frac{\rho}{2}A_{11}^2 -\Frac{\bar Q_{11}}{2} - S^u_1 A_{21}\right)\, \partial_{xx} \ub+ S^u_0\, \tilde\Zb\\
\\
& - \left(\Frac{\rho}{2}A_{12}+S^u_1\right)\, \partial_{tx} \tilde\Zb + \left(-\Frac{\bar Q_{12}}{2}+\Frac{\rho}{2}A_{11}A_{22}-S^u_1 A_{22}\right)\, \partial_{xx} \tilde\Zb\Big] +O(\Dx^2\ t^{-3/2}),\\
\\
\Tcal_z&= \Dx \Big[ \left(\Frac{\rho}{2} \tilde T_0(\ub) +T^z_0(\ub)\right) + \left(T^z_1+\Frac{\rho}{2}\tilde T_1 +(S^z_1+\Frac{\rho}{2}\tilde S_1)(\tilde Q'(\ub))\right)\, \partial_x \ub \\
\\
&+ \left(\Frac{\rho}{2}\tilde T_2 -\Frac{\bar Q_{21}}{2}-(S^z_1+\Frac{\rho}{2}\tilde S_1)A_{21}\right)\, \partial_{xx} \ub+ \left(\Frac{\rho}{2}\, I_{m_2} + S^z_0\right)\, \tilde\Zb - \left(S^z_1+\Frac{\rho}{2}\tilde S_1\right)\, \partial_{tx} \tilde\Zb \\
\\
&+ \left(\Frac{\rho}{2}\tilde S_2-\Frac{\bar Q_{22}}{2}-(S^z_1+\frac{\rho}{2}\tilde S_1)A_{22}\right)\, \partial_{xx} \tilde\Zb\Big] +O(\Dx^2\ t^{-3/2} ),
\end{array}
\end{equation}
Our choice of the coefficients $\Bcal_l^i(u)$, $\beta_l^i$, $C^i$ and $\Ccal_i(u)$ is made just to cancel the coefficients of the slowly terms, i.e. 
\begin{eqnarray}\label{coeffs_1}
T^\ub_0 = 0,  & S^u_0=0, \label{c1-c2}\\
\Frac{\rho}{2} \tilde T_0(\ub) +T^z_0(\ub) = 0, & \Frac{\rho}{2}\, I_{m_2} + S^z_0 =0, \label{c5-c8}\\
T^u_1+S^u_1\tilde Q'(\ub)=0, & \Frac{\rho}{2}A_{11}^2-\Frac{\bar Q_{11}}{2} - S^u_1 A_{21} = 0,\label{c3-c4}\\
T^z_1+\Frac{\rho}{2}\tilde T_1 +(S^z_1+\Frac{\rho}{2}\tilde S_1)\tilde Q'(\ub)=0, & \Frac{\rho}{2}\tilde T_2-\Frac{\bar Q_{21}}{2}-(S^z_1+\Frac{\rho}{2}\tilde S_1)A_{21}=0.\label{c6-c7}
\end{eqnarray}
Therefore, the local truncation error reduces to
\begin{equation}
\begin{array}{ll}
\Tcal_u =& \Dx\Big[ - \left(\Frac{\rho}{2}A_{12}+S^u_1\right)\, \partial_{tx} \tilde\Zb \\ \\&  + \left(-\Frac{\bar Q_{12}}{2}+\Frac{\rho}{2}A_{11}A_{22}-S^u_1 A_{22}\right)\, \partial_{xx} \tilde\Zb\Big] +O(\Dx^2\ t^{-3/2}),\\
\\
\Tcal_z= &\Dx \Big[ - \left(S^z_1+\Frac{\rho}{2}\tilde S_1\right)\, \partial_{tx} \tilde\Zb \\ \\ &+ \left(\Frac{\rho}{2}\tilde S_2-\Frac{\bar Q_{22}}{2}-(S^z_1+\frac{\rho}{2}\tilde S_1)A_{22}\right)\, \partial_{xx} \tilde\Zb\Big] +O(\Dx^2\ t^{-3/2}).
\end{array}
\end{equation}
and by the estimates given in \eqref{decay_u}-\eqref{decay_zt} the thesis is achieved.

We need now to show that system \eqref{c1-c2}-\eqref{c6-c7}  has always a solution given by the relations \eqref{coeffs_C}-\eqref{coeffs_Gamma}.
The terms given in \eqref{coeffs_C} for the $\Ccal$ and $C$ coefficients are obtained from \eqref{c1-c2} and \eqref{c5-c8}. Indeed, we get 
\begin{eqnarray*}
\left(-D_{11}\Ccal^1(\ub)-D_{12}\tilde\Ccal(\ub)+\bar C_{11} \ub\right)=0, & \bar C_{12}=0, \\
\left(-\Frac{\rho}{2} \tilde Q(\ub)  +(- D_{21}\Ccal^1(\ub)-D_{22}\tilde\Ccal(\ub)+\bar C_{21} \ub)\right) = 0, & \Frac{\rho}{2}\ I_{m_2} +\bar C_{22} =0,
\end{eqnarray*}
or in  a more compact form,
\begin{eqnarray*}
-\left(\begin{array}{cc}
D_{11} & D_{12}\\
D_{21} & D_{22}
\end{array}\right)
\left(\begin{array}{c}
\Ccal^1(\ub) \\ \tilde\Ccal(\ub)
\end{array}\right) + 
\left(\begin{array}{cc}
\bar C_{11} & \bar C_{12}\\
\bar C_{21} & \bar C_{22}
\end{array}\right) 
\left(\begin{array}{c}
\ub \\ \tilde Z
\end{array}\right) = 
\left(\begin{array}{c}
0 \\ \Frac{\rho}{2} \left(\tilde Q(\ub)-I_{m_2}\tilde Z\right)
\end{array}\right),
\end{eqnarray*}
which can be rewritten as
$$ -D \Ccal + D C D^{-1} Z = \left(\begin{array}{c}
0 \\ \Frac{\rho}{2} \left(\tilde Q(\ub)-\tilde Z\right)
\end{array}\right). $$
Now, we multiply on the left  by the matrix $D^{-1}$ to obtain
\begin{equation}\label{cglob}
-\Ccal + C f = \Frac{\rho}{2} D^{-1}  \left(\begin{array}{c}
0 \\ \tilde Q(\ub)-\tilde Z
\end{array}\right) = 
\Frac{\rho}{2} \left(-f + M(u)\right),
\end{equation}
that gives  \eqref{coeffs_C}.

We shall now focus on the derivation of the relations \eqref{coeffs_G} and \eqref{coeffs_Gamma}. From \eqref{c3-c4}-\eqref{c6-c7}, for the matrix of free coefficients $\bar G$ we have to impose
\begin{equation}\label{Gpassaggio}
\bar G_{12} A_{21} = -\Frac{\bar Q_{11}}{2} + \Frac{\rho}{2}A_{11}^2, \
\bar G_{22} A_{21} = -\Frac{\bar Q_{21}}{2} + \Frac{\rho}{2}\left(\tilde T_2 -\tilde S_1A_{21}\right).
\end{equation}

Now, using the notation \eqref{blockDiagDim} for the two diagonal matrices $G$ and $Q$ and  
 by applying relations \eqref{matrixH} and \eqref{Dinv}, we have that for $\bar G=D G D^{-1}$ and $\bar Q = D Q D^{-1}$
\begin{eqnarray*}
\bar G_{12}= G_1 H_1^{-1}D_{21}^T + D_{12}G_2H_2^{-1}D_{22}^T, \\
\bar G_{22}= D_{21}G_1H_1^{-1}D_{21}^T+D_{22}G_2H_2^{-1}D_{22}^T,\\
\bar Q_{11} = Q_1 H_1^{-1} +D_{12}Q_2 H_2^{-1}D_{12}^T,\\
\bar Q_{21} = D_{21}Q_1H_1^{-1} + D_{22}Q_2  H_2^{-1}D_{12}^T.
\end{eqnarray*} 
Since $D_{11}=I_k$, we have that \eqref{Gpassaggio} becomes
\begin{equation}\label{G1}
\left(
\begin{array}{c}
G_{1}  H_1^{-1}D_{21}^T A_{21}\\ 
G_{2} H_2^{-1}D_{22}^T A_{21}
\end{array}
\right) = 
-\Frac{1}{2}
\left(\begin{array}{c}
Q_{1}H_1^{-1} \\
Q_{2} H_2^{-1}D_{12}^T 
\end{array}
\right)
+ \Frac{\rho}{2} D^{-1}
\left(\begin{array}{c}
A_{11}^2 \\
A_{21}A_{11} - A_{22}A_{21}
\end{array}
\right),
\end{equation}
where we used relations \eqref{tildeTerms} to sort out the term $\tilde T_2 -\tilde S_1A_{21} = A_{21}A_{11} - A_{22}A_{21}$. Here we used the fact that, neglecting the terms with a faster decay, we can replace  $\tilde S_1$ with $2A_{22}$. Actually, since the term $\tilde S_1$ in \eqref{TuTz_proof} multiplies only terms that decay faster than $t^{-3/2}$, it is possible to write 
$$ \tilde S_1 = 2 A_{22} -\tilde Q'(u) A_{12}= 2 A_{22} + O(\Delta x t^{-2}).$$
From \eqref{G1} we get
\begin{equation}\label{G2}
\begin{array}{l}
G_{1}H_1^{-1}  D_{21}^T A_{21} = -\Frac{1}{2} Q_{1}H_1^{-1} + \Frac{\rho}{2} H_1^{-1}\left(A_{11}^2+ D_{21}^T\left(A_{21}A_{11} - A_{22}A_{21}\right)\right)\\ 
\\
G_{2} H_2^{-1} D_{22}^T A_{21} = -\Frac{1}{2}Q_{2} H_2^{-1} D_{12}^T + \Frac{\rho}{2} H_2^{-1} \left(D_{12}^TA_{11}^2+ D_{22}^T\left(A_{21}A_{11} - A_{22}A_{21}\right)\right),
\end{array}
\end{equation}

Using  the specific form of $D$ and relations \eqref{Drelaz}, by algebraic considerations we get
$$  A_{11}^2+ D_{21}^T\left(A_{21}A_{11} - A_{22}A_{21}\right)= P-(\lambda_1 I_k -A_{11})^2, $$
and
$$ D_{12}^TA_{11}^2+ D_{22}^T\left(A_{21}A_{11} - A_{22}A_{21}\right)= D^T_{12}P-D^T_{12}A_{11}^2-\Lambda_2^2D^T_{12}+2\Lambda_2D^T_{12}A_{11}.$$

Then, we get for $i=1,..,m$ 
\begin{equation}
g_i h_i^{-1}(\lambda_i I_k -A_{11}) = -\Frac{1}{2} q_i h_i^{-1} +\Frac{\rho}{2}h_i^{-1}\left(P-(\lambda_i I_k -A_{11})^2\right).
\end{equation}
Assuming for $i=1,...,m$, $(\lambda_i I_k -A_{11})$ to be invertible, we obtain relations \eqref{coeffs_G}.
 
Finally, we need to compute the vector function $\Gamma(u)$. From \eqref{c6-c7} we obtain the two following relations
\begin{equation}\label{GG1}
-\bar\Gamma'_1+\bar G_{11}+\bar G_{12}\tilde Q'(\ub)=0,  \
\\
-\tilde{\bar\Gamma}'+\bar G_{21} + \bar G_{22} \tilde Q'(\ub) = -\frac{\rho}{2}\left(\tilde T_1 +\tilde S_1\tilde Q'(\ub)\right).
\end{equation}
Since
$$ M(u) = D^{-1}\left(\begin{array}{c} u \\ \tilde Q(u) \end{array}\right) \Rightarrow M'(u) = D^{-1}\left(\begin{array}{c} I_k \\ \tilde Q'(u) \end{array}\right),$$
equations \eqref{GG1} reduce to 
\begin{equation}\label{GG2}
-\Gamma'(u) + G M'(u) = -\Frac{\rho}{2} D^{-1} \left(\begin{array}{c} 0 \\  \tilde T_1 +\tilde S_1\tilde Q'(\ub)\end{array}\right),
\end{equation}
where $\tilde T_1 +\tilde S_1\tilde Q'(\ub)=A_{22}\tilde Q'(u)-\tilde Q'(u) A_{11}+A_{21}$. 
Since $ \tilde Q'(\ub) = D_{21}M'_1(u)+D_{22}\tilde M'(u)$  and  $M'_1(u)+D_{12}\tilde M'(u)=I_k$,
by using relations \eqref{Drelaz}, we obtain that the right side of \eqref{GG1} is equal to
$$
 -\Frac{\rho}{2} \left(\begin{array}{c} H_1^{-1} \left((I_k-H_1 M'_1(u))A_{11}+\Lambda_1H_1M'_1(u)-\Sum_{i=1}^m \lambda_i M'_i(u)\right) \\  H_2^{-1}\left(D^T_{12}A_{11} - H_2 \tilde M'(u)A_{11}+\Lambda_2 H_2 \tilde M'(u)-D^T_{12}\Sum_{i=1}^m \lambda_i M'_i(u) \right)\end{array}\right).
$$
Therefore, we get for every vector $\Gamma_i(u)\in\R^k$, $i=1,...,m$, that 
\begin{equation}
-\Gamma_i'(u) + g_i M'_i(u) = -\Frac{\rho}{2} \left((h_i^{-1}- M_i'(u))A_{11}+\lambda_i M_i'(u)-h_i^{-1}\Sum_{i=1}^m \lambda_i M'_i(u)\right).
\end{equation}
\end{proof}

\begin{remark}\label{remarkSchemeA0}
As previously observed in Remark \ref{remarkA0}, when $A_{11}=0$, we have, for the second-order time derivatives,
$$ \partial_{tt} \ub \sim t^{-2}, \quad \partial_{tt} \tilde Z  \sim t^{-2}.$$
Therefore, in dealing with \eqref{Tu_gen}-\eqref{Tz_gen}, we do not need to delete the second-order time derivatives $\ub_{tt}$ and $\tilde Z_{tt}$, and relations \eqref{coeffs_C}-\eqref{coeffs_Gamma} reduces to
\begin{equation}\label{coeffs_C_A0}
 C = 0, \quad \Ccal = 0.
\end{equation}
Besides, for each $i=1,...,m$ such that $\lambda_i\ne 0$, we can choose
\begin{equation}\label{coeffs_GGamma_A0}
g_i=\beta^i_1 -\beta^i_{-1} = -\Frac{1}{2\lambda_i} q_i, \quad   \Gamma_i(u)=\Bcal^i_{1}-\Bcal^i_{-1} =  g_i M_i(u).
\end{equation}
\end{remark}
Then,  we can select  $q_i=|\lambda_i|$,for $i=1,...,m$, and we are free to choose
\begin{equation}
\beta^i_0 = \Frac{1}{2}, \quad \Bcal^i_0(u) = \Frac{M_i(u)}{2}.
\end{equation}
Therefore, we obtain  an upwind scheme for  system \eqref{sysdiag}, with the classical Roe upwinding approximation for the source term \cite{Roe}, namely
\begin{equation}\label{roe_gen}
\begin{array}{ll}
\Frac{f^i_{j,n+1}-f^i_{j,n}}{\Dt}+\Frac{\lambda_i}{2\Dx}\left(f^i_{j+1,n}-f^i_{j-1,n}\right)-\Frac{|\lambda_i|}{2\Dx}\delta_x^2 f^i_{j,n}\\ \\ \qquad =\Frac{M_i(u^n_{j-1})+2M_i(u^n_j)+M_i(u^n_{j+1})}{4}
+\frac{sgn(\lambda_i)}{4}(M_i(u^n_{j-1})-M_i(u^n_{j+1}))\\ \\  \qquad\  -\Frac{f^i_{j-1,n}+2f^i_{j,n}+f^i_{j+1,n}}{4}- \frac{sgn(\lambda_i)}{4}(f^i_{j-1,n}-f^i_{j+1,n}),
\end{array}
\end{equation} 
From now on, we shall refer to scheme \eqref{roe_gen} as the ROE scheme. 
Then, we have  just proved the following result.
\begin{prop} For $A_{11}=0$, the local truncation error of the ROE scheme \eqref{roe_gen} verifies the time asymptotic estimate \eqref{TAHO_TD_gen}.
\end{prop}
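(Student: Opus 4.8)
The proof carries out, for the degenerate case $A_{11}=0$, the same truncation-error reduction as in the proof of Theorem \ref{prop_aho_gen}, with the simplification pointed out in Remark \ref{remarkSchemeA0}. As already shown above the statement, the ROE scheme \eqref{roe_gen} is the member of the family \eqref{numdiag} obtained from the coefficients \eqref{coeffs_C_A0}--\eqref{coeffs_GGamma_A0} — that is, $C=0$, $\Ccal=0$, $g_i=\beta^i_1-\beta^i_{-1}=-\tfrac{1}{2\lambda_i}q_i$ and $\Gamma_i(u)=\Bcal^i_1(u)-\Bcal^i_{-1}(u)=g_iM_i(u)$ for each $i$ with $\lambda_i\neq0$ (for $\lambda_i=0$ one takes $q_i=g_i=0$, $\Gamma_i=0$, and that block plays no role below) — together with the admissible choices $q_i=|\lambda_i|$, $\beta^i_0=\tfrac12$, $\Bcal^i_0(u)=\tfrac12 M_i(u)$. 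In particular $\bar C=0$, $Q=\mathrm{diag}(q_1,\dots,q_m)$, $G=\mathrm{diag}(g_1,\dots,g_m)$ and $\Gamma(u)=GM(u)$. The key observation is that, since $A_{11}=0$, Remark \ref{remarkA0} gives $\partial_{tt}\ub=O(t^{-2})$ and $\partial_{tt}\tilde Z=O(t^{-2})$ (estimates \eqref{decay_utt_a0}, \eqref{decay_ztt_a0}), so the terms $\tfrac{\Dt}{2}\partial_{tt}\ub$ and $\tfrac{\Dt}{2}\partial_{tt}\tilde Z$ appearing in \eqref{Tu_gen}--\eqref{Tz_gen} are already $O(\Dx\,t^{-2})$ and need not be cancelled.

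First I would evaluate the coefficients in the truncation errors \eqref{Tu_gen}--\eqref{Tz_gen} for these choices. Since $\Ccal=0$ and $\bar C=0$ we get $T^u_0=T^z_0=0$ and $S^u_0=S^z_0=0$. Since $M'(u)=D^{-1}(I_k,\tilde Q'(u))^T$, one has $\bar\Gamma'(u)=D\Gamma'(u)=DGM'(u)=\bar G\,(I_k,\tilde Q'(u))^T$, whence $\bar\Gamma'_1=\bar G_{11}+\bar G_{12}\tilde Q'(u)$ and $\widetilde{\bar\Gamma}'=\bar G_{21}+\bar G_{22}\tilde Q'(u)$, so that $T^u_1=-\bar G_{12}\tilde Q'(u)$, $S^u_1=\bar G_{12}$, $T^z_1=-\bar G_{22}\tilde Q'(u)$, $S^z_1=\bar G_{22}$. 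Substituting into \eqref{Tu_gen}--\eqref{Tz_gen}, the $O(\Dx)$ first-order terms collapse into $\Dx\,\bar G_{12}\,(\partial_x\tilde Z-\tilde Q'(u)\partial_x\ub)$ and $\Dx\,\bar G_{22}\,(\partial_x\tilde Z-\tilde Q'(u)\partial_x\ub)$, respectively; using the second equation of \eqref{syscd1} to write $\partial_x(\tilde Z-\tilde Q(u))=-\partial_{tx}\tilde Z-A_{21}\partial_{xx}\ub-A_{22}\partial_{xx}\tilde Z$, the truncation errors become
\[
\Tcal_\ub=\tfrac{\Dt}{2}\partial_{tt}\ub-\Dx\Bigl(\tfrac{\bar Q_{11}}{2}+\bar G_{12}A_{21}\Bigr)\partial_{xx}\ub-\Dx\Bigl(\tfrac{\bar Q_{12}}{2}+\bar G_{12}A_{22}\Bigr)\partial_{xx}\tilde Z-\Dx\,\bar G_{12}\,\partial_{tx}\tilde Z+O(\Dx^2),
\]
\[
\Tcal_\zb=\tfrac{\Dt}{2}\partial_{tt}\tilde Z-\Dx\Bigl(\tfrac{\bar Q_{21}}{2}+\bar G_{22}A_{21}\Bigr)\partial_{xx}\ub-\Dx\Bigl(\tfrac{\bar Q_{22}}{2}+\bar G_{22}A_{22}\Bigr)\partial_{xx}\tilde Z-\Dx\,\bar G_{22}\,\partial_{tx}\tilde Z+O(\Dx^2),
\]
where, exactly as in \eqref{TuTz_proof}, the $O(\Dx^2)$ remainder decays like $\Dx^2 t^{-3/2}$, its slowest piece being the second-order Taylor term of the source, which carries $\partial_{xx}\ub=O(t^{-3/2})$.

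The decisive step is to show that the $\partial_{xx}\ub$ coefficients vanish:
\[
\tfrac{\bar Q_{11}}{2}+\bar G_{12}A_{21}=0,\qquad \tfrac{\bar Q_{21}}{2}+\bar G_{22}A_{21}=0.
\]
Two structural facts are used. First, $2\lambda_ig_i=-q_i$ for every $i$, i.e. $2\Lambda G=-Q$, so $2A\bar G=2D\Lambda GD^{-1}=-DQD^{-1}=-\bar Q$; taking the $(1,1)$ and $(2,1)$ blocks of $-\bar Q=2A\bar G$ and using $A_{11}=0$ yields $\tfrac{\bar Q_{11}}{2}=-A_{12}\bar G_{21}$ and $\tfrac{\bar Q_{21}}{2}=-(A_{21}\bar G_{11}+A_{22}\bar G_{21})$. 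Second, $\Lambda$ and $G$ commute (both are block diagonal with blocks that are scalar multiples of $I_k$ or commute with the corresponding $h_i$), so $A\bar G=D\Lambda GD^{-1}=DG\Lambda D^{-1}=\bar GA$; taking the $(1,1)$ and $(2,1)$ blocks of $A\bar G=\bar GA$, again with $A_{11}=0$, gives $A_{12}\bar G_{21}=\bar G_{12}A_{21}$ and $A_{21}\bar G_{11}+A_{22}\bar G_{21}=\bar G_{22}A_{21}$. Combining the two pairs of relations gives the claimed identities. (When $A_{11}=0$, these play the role of the $\partial_{xx}\ub$–cancellations effected by \eqref{c3-c4}--\eqref{c6-c7} in the proof of Theorem \ref{prop_aho_gen}.)

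With the $\partial_{xx}\ub$ terms thus removed, the only surviving terms in $\Tcal_\ub,\Tcal_\zb$ are $\tfrac{\Dt}{2}\partial_{tt}\ub$, $\tfrac{\Dt}{2}\partial_{tt}\tilde Z$, $\Dx\,\partial_{xx}\tilde Z$, $\Dx\,\partial_{tx}\tilde Z$ and the $O(\Dx^2)$ remainder. Since $\Dt=\rho\Dx$, estimates \eqref{decay_utt_a0}, \eqref{decay_ztt_a0} bound the first two by $O(\Dx\,t^{-2})$, while \eqref{decay_z} with $\beta=2$ and \eqref{decay_zt} with $\beta=1$ bound $\Dx\,\partial_{xx}\tilde Z$ and $\Dx\,\partial_{tx}\tilde Z$ by $O(\Dx\,t^{-2})$; the remainder is $O(\Dx^2 t^{-3/2})$. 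Hence $\Tcal_\ub,\Tcal_\zb=O(\Dx\,t^{-2})+O(\Dx^2 t^{-3/2})$, which is precisely \eqref{TAHO_TD_gen}. I expect the only delicate point to be the block-matrix bookkeeping of the previous paragraph — isolating the proportionality $2A\bar G=-\bar Q$ and the commutation $A\bar G=\bar GA$ and checking that, together with $A_{11}=0$, they deliver exactly the two required cancellations; everything else is a mechanical substitution in \eqref{Tu_gen}--\eqref{Tz_gen}, together with a routine check that the listed coefficients reproduce \eqref{roe_gen} and are consistent in the sense of \eqref{consistency}.
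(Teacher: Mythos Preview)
Your proof is correct and follows essentially the same route as the paper, which treats the proposition as an immediate consequence of Remark~\ref{remarkSchemeA0}: once $A_{11}=0$ gives $\partial_{tt}\ub,\partial_{tt}\tilde Z=O(t^{-2})$, the only slow terms left in \eqref{Tu_gen}--\eqref{Tz_gen} are those in $\partial_x\ub$ and $\partial_{xx}\ub$, and the simplified cancellation conditions \eqref{coeffs_C_A0}--\eqref{coeffs_GGamma_A0} are precisely the ROE coefficients. Your derivation is more explicit than the paper's: where the paper would have one rerun the computations leading to \eqref{Gpassaggio}--\eqref{G2} with the $\rho$--terms dropped, you verify the two $\partial_{xx}\ub$ cancellations directly via the clean block identities $2A\bar G=-\bar Q$ (from $2\lambda_i g_i=-q_i$) and $A\bar G=\bar GA$ (from $[\Lambda,G]=0$), which is a neat shortcut.
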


\section{A monotone Time-AHO scheme for the $2\times 2$ case}\label{sec_monot2x2}
In this section we  apply the general result stated in Theorem \ref{prop_aho_gen} for the $2\times 2$ case described in the Example \eqref{example2x2}.
For the artificial viscosity, we  assume $q_1=q_2=q$, $q\in\R^+$.
\\
Hence, by applying the results given in Theorem \ref{prop_aho_gen}, we get for the scheme parameters the following expressions:
\begin{equation}\label{Cc}
 C_1=C_2=-\Frac{\rho}{2}, \quad\quad  \Ccal_1(u) = -\Frac{\rho}{2}M_1(u), \quad\quad \Ccal_2(u) = -\Frac{\rho}{2}M_2(u)\end{equation}
\begin{equation}\label{gamma}
 \gamma_1= \Frac{q}{2 a_+}+\Frac{a\rho}{2 a_+}\left(2\lambda + a\right),\quad\quad \gamma_2= -\Frac{q}{2 a_-}+\Frac{a\rho}{2 a_-}\left(2\lambda-a\right), 
\end{equation}
\begin{equation}\label{Gamma}
 \Gamma_1(u) = \Frac{q-a^2\rho}{2 a_+} M_1(u)-\Frac{a_-^2}{4\lambda}\rho u, \quad\quad
\Gamma_2(u) = -\Frac{q-a^2\rho}{2 a_-} M_2(u)+\Frac{a_+^2}{4\lambda}\rho u.
 \end{equation}
To complete the definition of scheme \eqref{numcd} it is still 
necessary to choose four more free parameters, such as $\Bcal^{1,2}_0(\cdot)$ and $\beta^{1,2}_0$. For the $2\times 2$ case such parameters can be defined by applying the monotonicity conditions.

Starting from the scheme written in its diagonal form \eqref{numdiag}, the monotonicity conditions are given by the following relations, see \cite{DMR}:
\begin{equation}\label{monotone}
\begin{array}{l}  \Bcal_{1;2,l}'(\cdot)\geq 0 \quad \forall \, l=-1,0,1,\\
1-\rho q+\Dt\left(  \Bcal_{1;2,0}'(u) -\beta^{1;2}_0\right)\geq 0,\\
 \left\{ \begin{array}{l}
\Frac{\rho \lambda_{1;2}}{2}+\Frac{\rho
  q}{2}+\Dt \left(\Bcal_{{1;2},-1}'(u) -\beta^{1;2}_{-1}\right)\geq 0,\\ \\
-\Frac{\rho \lambda_{1;2}}{2}+\Frac{\rho q}{2}+\Dt \left( \Bcal_{{1;2},1}'(u) -\beta^{1;2}_{1}\right)\geq 0.
\end{array} \right.
\end{array} \end{equation}

\begin{prop}[Monotonicity]\label{prop_monot}
Assume $a>0$ and $\lambda \geq \|F'(u)\|_\infty$. Under the assumptions of Theorem \ref{prop_aho_gen}, the scheme \eqref{numcd} for the $2\times 2$ case  verifies the monotonicity conditions \eqref{monotone} for the choices:
\begin{equation}\label{B0}
\begin{array}{ll}
\Bcal^1_{0}(u)= M_1(u) -|\Gamma_1(u)| + \Dx\ C^1(u), &
\Bcal^2_{0}(u)= M_2(u) -|\Gamma_2(u)| + \Dx\ C^2(u).
\end{array}
\end{equation}
\begin{equation}\label{b0}
\begin{array}{ll}
\beta^1_0=1-\gamma_1 + \Dx\ c_1, &
\beta^2_0=1+\gamma_2 + \Dx\ c_2, 
\end{array}\end{equation}
under the CFL conditions
\begin{equation}\label{cfl_Dt}
  \Dt\leq \min{\left(\Frac{1-\lambda\rho}{1+\gamma_1},\Frac{1-\lambda\rho}{1-\gamma_2}\right)}, 
\end{equation}
and, for $\lambda > 2 a$
\begin{equation}\label{cfl}
\begin{array}{ll}
\Dx\leq \Frac{a^2}{\lambda+a}, & 
\rho \leq \min{\left(\Frac{1}{\lambda},\Frac{2\lambda^2 m_-}{2a^2\lambda m_-+a_+a_-^2},\Frac{2\lambda^2 m_+}{2a^2\lambda m_++a_+^2a_-}\right)},
\end{array}
\end{equation}
where $ m_{1;2} = \min_{u}(M'_{1;2}(u))> 0 $.
Otherwise, if $a< \lambda < 2 a$, we get the supplementary requirement
$$ \rho \geq \Frac{|\lambda-2 a|}{a^2-\Dx\ a_-}. $$
\end{prop}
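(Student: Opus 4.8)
The proof is a direct, if lengthy, verification; the plan is to arrange the bookkeeping so that each of the four groups of inequalities in \eqref{monotone} becomes a transparent algebraic statement. \emph{Step 1: reconstruct every scheme coefficient.} The quantities $\gamma_i$ and $\Gamma_i(u)$ are already fixed by Theorem~\ref{prop_aho_gen} through \eqref{gamma}--\eqref{Gamma}, and by definition they equal $\beta^i_1-\beta^i_{-1}$ and $\Bcal^i_1-\Bcal^i_{-1}$; feeding this, the choices $C^i,\Ccal_i$ of \eqref{Cc}, and the prescribed $\beta^i_0,\Bcal^i_0$ of \eqref{b0}--\eqref{B0} into the consistency relations \eqref{consistency} and solving the resulting $2\times2$ systems yields the ``upwind split'' form
\begin{gather*}
\beta^1_{-1}=\beta^2_{1}=0,\qquad \beta^1_{1}=\gamma_1,\qquad \beta^2_{-1}=-\gamma_2,\\
\Bcal^i_{1}=\Gamma_i^{+}(u),\qquad \Bcal^i_{-1}=\Gamma_i^{-}(u)\quad(i=1,2),
\end{gather*}
with $\Gamma_i^{\pm}$ the positive and negative parts of $\Gamma_i$. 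After this, every term in \eqref{monotone} is an explicit function of $u,q,\lambda,a,\rho,\Dx$.

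\emph{Step 2: the sign analysis, which settles the first line of \eqref{monotone}.} Under $\lambda\ge\|F'\|_\infty$ the maxwellians are nondecreasing, $M_1',M_2'\ge0$; since $a>0$ one gets $\gamma_1>0$, and using $\rho\le1/\lambda$ together with $a_+a_-=\lambda^2-a^2$ one checks $\gamma_2\le0$, so $\beta^1_{-1},\beta^2_{-1}\ge0$. For $(\Bcal^i_{\pm1})'=(\Gamma_i^{\pm})'\ge0$ one verifies that $\Gamma_i$ is nondecreasing on the (small) range of $u$ — this is where $\lambda\ge\|F'\|_\infty$, the smallness of the data and $\rho\le1/\lambda$ are used jointly, since it amounts to $M_i'\ge\beta/\alpha$ with $\alpha,\beta$ the coefficients in $\Gamma_i'=\alpha M_i'-\beta$ — so that its positive and negative parts are nondecreasing as well. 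Finally the $l=0$ requirement $(\Bcal^i_0)'=M_i'(1-\tfrac{\Dx\rho}{2})-\mathrm{sgn}(\Gamma_i)\,\Gamma_i'\ge0$ follows from the previous step together with $\Dx\le a^2/(\lambda+a)$, which forces $1-\tfrac{\Dx\rho}{2}-\alpha>0$.

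\emph{Step 3: the remaining three lines of \eqref{monotone}.} Substitute $\Dt=\rho\Dx$, insert the explicit coefficients and $\lambda_1=-\lambda$, $\lambda_2=\lambda$. The $l=0$ line collapses to $\Dt(1+\gamma_1)\le1-\lambda\rho$ and $\Dt(1-\gamma_2)\le1-\lambda\rho$, i.e.\ exactly \eqref{cfl_Dt}. In the $l=\pm1$ lines, after discarding the nonnegative terms $\Dt(\Gamma_i^{\pm})'$ (by Step~2) and inserting $\beta^1_1=\gamma_1$, $\beta^2_{-1}=-\gamma_2$ and the formulas \eqref{gamma}, one is left with a handful of inequalities in $\rho,\Dx,q,\lambda,a$ which, simplified with $a_\pm=\lambda\pm a$ and the constants $m_{1;2}$, are seen to be implied by \eqref{cfl}. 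The dichotomy enters precisely here: one of the $l=-1$ inequalities for the second family carries a coefficient that is positive iff $\lambda>2a$; when $a<\lambda<2a$ it reverses, turning that constraint into the lower bound $\rho\ge|\lambda-2a|/(a^2-\Dx a_-)$, which is the supplementary requirement (and is compatible with $\Dx<a^2/(\lambda-a)$, a consequence of $\Dx\le a^2/(\lambda+a)$).

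The main obstacle is not any single inequality but the combinatorics of the signs: one must simultaneously track the signs of $q-a^2\rho$, $2\lambda\pm a$, $a^2-\Dx a_-$ and of $\Gamma_i$ itself, establish the monotonicity of $\Gamma_i$ (the one place where the small-data hypothesis really bites), and then check that the finitely many constraints produced in Step~3 are \emph{jointly} compatible — not mutually exclusive — under \eqref{cfl} and \eqref{cfl_Dt}. This last verification rests on the explicit structure of the matrix $D$ of Example~\ref{example2x2}, in particular $h_1=2\lambda/a_-$, $h_2=2\lambda/a_+$, $A_{11}=a$ and $\sum_i\lambda_i^2h_i^{-1}=\lambda^2$, which is what makes the various bounds in \eqref{cfl} line up.
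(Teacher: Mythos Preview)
Your overall strategy---reconstruct the $\pm1$ coefficients from consistency and the prescribed $\beta^i_0,\Bcal^i_0$, then run through the lines of \eqref{monotone}---is precisely the paper's, and your Step~1 identities ($\beta^1_{-1}=\beta^2_1=0$, $\beta^1_1=\gamma_1$, $\beta^2_{-1}=-\gamma_2$, $\Bcal^i_{\pm1}=\Gamma_i^{\pm}$) are correct and reproduce the upwind form \eqref{aho}.

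Two points in the execution are wrong. In Step~2 you assert that $\Gamma_i$ is nondecreasing for \emph{both} $i$, and then that ``its positive and negative parts are nondecreasing as well''. Neither is true. From \eqref{Gamma} the coefficient of $M_2'$ in $\Gamma_2'$ is $-(q-a^2\rho)/(2a_-)<0$ (take $q=\lambda$, $\rho\le1/\lambda$), so your ``$\alpha>0$'' parametrization fails for $i=2$; what one actually gets under the last two $\rho$-bounds in \eqref{cfl} is $\Gamma_1'\ge0$ \emph{and} $\Gamma_2'\le0$---those two bounds are exactly the conditions $m_1\ge\beta_1/\alpha_1$ and $m_2\ge\beta_2/\alpha_2$ written out, and you never make that connection. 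And the negative part of a nondecreasing function is non\emph{increasing}, not nondecreasing. What really makes the $l=\pm1$ sign checks work is that the derivative signs together with $\Gamma_i(0)=0$ force $\Gamma_1\ge0$ and $\Gamma_2\le0$ on the relevant range, so $\Bcal^1_{-1}\equiv\Bcal^2_{1}\equiv0$ and only the $l=0$ cases $(\Bcal^i_0)'\ge0$ remain nontrivial in the first line of \eqref{monotone}.

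In Step~3 you place the $\lambda\gtrless2a$ dichotomy in the $l=\pm1$ inequalities. With the choices \eqref{b0} and $q\ge\lambda$, those four inequalities are essentially automatic (two reduce to $q\ge\lambda$, the other two are dominated by $\rho\lambda$). Both the dichotomy and the restriction $\Dx\le a^2/(\lambda+a)$ come instead from the \emph{first} line of \eqref{monotone}, namely $(\Bcal^i_0)'\ge0$: expanding $(\Bcal^2_0)'=M_2'+\Gamma_2'+\Dx\,\Ccal_2'$ produces the factor $\bigl(1-\tfrac{\lambda}{2a_-}+\tfrac{\rho}{2}(\tfrac{a^2}{a_-}-\Dx)\bigr)$ in front of $M_2'$, and the sign of $1-\tfrac{\lambda}{2a_-}$ flips exactly at $\lambda=2a$. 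When $a<\lambda<2a$ that coefficient is negative and the inequality turns into the stated lower bound on $\rho$.
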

\begin{proof}
From consistency \eqref{consistency}, we write
$$
\begin{array}{l}
\Bcal^{1;2}_{-1}(u)=\Frac{1}{2}\Big(M_{1;2}(u)-\Bcal^{1;2}_{0}(u)-\Gamma_{1;2}(u)+\Dx\ C_{1;2}(u)\Big), \\ \\
\Bcal^{1;2}_{1}(u)=\Frac{1}{2}\Big(M_{1;2}(u)-\Bcal^{1;2}_{0}(u)+\Gamma_{1;2}(u)+ \Dx\ C_{1;2}(u)\Big),\\
\\
\beta^{1;2}_{-1}=\Frac{1}{2}\Big(1-\beta^{1;2}_{0}-\gamma_{1;2}+\Dx\ c_{1;2}\Big), \ 
\beta^{1;2}_{1}=\Frac{1}{2}\Big(1-\beta^{1;2}_{0}+\gamma_{1;2} + \Dx\ c_{1;2}\Big).
\end{array}
$$
The first condition in \eqref{monotone} is equivalent to 
\begin{equation}\label{monot_1_1}
 M'_{1;2}-|\Gamma'_{1;2}| + \Dx\ C'_{1;2} \geq \Bcal'_{0,{1;2}}\geq 0,
\end{equation}
and the third condition in \eqref{monotone} is equivalent to
\begin{equation}\label{monot_3_1}
\left\{\begin{array}{l}
 \Frac{\rho}{2}(\pm \lambda+q)-\Frac{\Dt}{2}\Big(1-\beta^2_{0}\mp\gamma_2 + \Dx\ c_+\Big)\geq 0,\\
\\
 \Frac{\rho}{2}(\mp \lambda+q)-\Frac{\Dt}{2}\Big(1-\beta^1_{0}\mp\gamma_1 + \Dx\ c_- \Big)\geq 0.
\end{array}\right.
\end{equation}

It is natural to assume that the CFL ratio $\rho$ verifies the standard hyperbolic condition 
\begin{equation}\label{hyp_cfl}
\rho \leq \Frac{1}{\lambda}.
\end{equation}
Then, from relations \eqref{gamma}, we have $\gamma^+\leq 0$ and $\gamma^-\geq 0$ and 
for $q\geq\lambda$, we get monotonicity by choosing in \eqref{monot_3_1},
\begin{equation}\label{monotb0}\begin{array}{ll}
\beta^1_0=1-\gamma_1 + \Dx\ c_-, &
\beta^2_0=1+\gamma_2 + \Dx\ c_+, 
\end{array}\end{equation}
under the limitation required in the second condition in  \eqref{monotone}, 
\begin{equation}
  \Dt\leq \min{\left(\Frac{1-\lambda\rho}{1+\gamma_1},\Frac{1-\lambda\rho}{1-\gamma_2}\right)}.
\end{equation}
To verify the request \eqref{monot_1_1},  we set first
\begin{equation}\label{monotB0}
\begin{array}{ll}
\Bcal'_{1,0}(u)= M'_1 -|\Gamma'_1| + \Dx\ C'_1, &
\Bcal'_{2,0}(u)= M'_2 -|\Gamma'_2| + \Dx\ C'_2.
\end{array}\end{equation}
For 
$$
\rho \leq \min{\left(\Frac{2\lambda^2 m_-}{2a^2\lambda m_-+a_+a_-^2},\Frac{2\lambda^2 m_+}{2a^2\lambda m_++a_+^2a_-}\right)}
$$
we get 
$$ \Gamma'_2 \leq 0, \quad \Gamma'_1\geq 0,$$
then conditions \eqref{monot_1_1} become
$$\begin{array}{l} 
\left(1-\Frac{\lambda}{2a_-}+\Frac{\rho}{2}\left(\frac{a^2}{a_-}-\Dx\right)\right)M'_2 +\Frac{a^2_+}{4\lambda}\rho\geq 0, \\
\\
\left(1-\Frac{\lambda}{2a_+}+\Frac{\rho}{2}\left(\frac{a^2}{a_+}-\Dx\right)\right)M'_1 +\Frac{a^2_-}{4\lambda}\rho\geq 0,
\end{array}
$$
that are verified under the following limitations, 
\begin{equation}\label{monot_noLin}
\lambda > \max{\left(\|F'(u)\|_\infty, 2 F'(0)\right)}, 
\quad \Dx\leq \Frac{a^2}{\lambda+a}.
\end{equation}
If we choose 
$$ a<\lambda<2 a, $$
we get a limitation from the bottom for $\rho$,
$$ \rho \geq \Frac{|\lambda-2 a|}{a^2-\Dx\ a_-}. $$
\end{proof}

The monotone Time-AHO scheme has then the following expression,
\begin{equation}\label{aho}
\begin{array}{l}
\Frac{f^1_{j,n+1}-f^1_{j,n}}{\Dt}-\frac{\lambda}{\Dx}\left(f^1_{j+1,n}-f^1_{j,n}\right)\\ 
\\
\begin{array}{ll}
=&\left(1-\frac{\lambda-a^2\rho}{2 a_+}\right)M_1(u_{j,n})+\frac{\lambda-a^2\rho}{2 a_+}M_1(u_{j+1,n})-\left((1-\gamma^-)f^1_{j,n}+\gamma^- f^1_{j+1,n}\right)\\
\\
&-\frac{\rho a_-^2}{4\lambda}(u_{j+1,n}-u_{j,n})+\frac{\rho\Dx}{2}\left(f^1_{j,n}-M_1(u_{j,n})\right).\\
\end{array}\\
\\
\Frac{f^2_{j,n+1}-f^2_{j,n}}{\Dt}+\frac{\lambda}{\Dx}\left(f^2_{j,n}-f^2_{j-1,n}\right)\\ 
\\
\begin{array}{ll}
=&\frac{\lambda-a^2\rho}{2 a_-}M_2(u_{j-1,n})+\left(1-\frac{\lambda-a^2\rho}{2 a_-}\right)M_2(u_{j,n})-\left(|\gamma^+| f^2_{j-1,n}+(1-|\gamma^+|)f^2_{j,n}\right)\\
\\
&+\frac{\rho a_+^2}{4\lambda}(u_{j,n}-u_{j-1,n})+\frac{\rho\Dx}{2}\left(f^2_{j,n}-M_2(u_{j,n})\right).
\end{array}
\end{array}
\end{equation}
Notice that, the approximation of the source terms involve only the values of the solutions on the "upwinding-nodes", i.e. $(x_{j},x_{j+1})$ and $(x_{j-1},x_j)$ respectively for the first and the second equations.
\\
The scheme may be then considered as an extension of the known approximation with the upwinding of the source term \eqref{roe_gen} described in Remark \ref{remarkSchemeA0}. 
Let us stress on the fact that, when $F'(0)=0$, the local truncation error of the ROE scheme \eqref{roe_gen} verifies the decay properties obtained by the Time-AHO schemes in \eqref{TAHO_TD_gen}.

\section{A monotone Time-AHO scheme for the $3\times 3$ case}\label{3x3case}
In this section we compute a TAHO approximation for the $3\times 3$ case of example \eqref{example3x3}. In particular, we study the case where $F(u)=a(u-u^2)$, with $a>0$. Then we have
$F^\prime(0)=a>0$. 

We recall that $\alpha=|a|+\beta \lambda \in ]a,\lambda[$ and 
\begin{equation}\label{mono-M}
\lambda-\alpha+|a| > |F^\prime(u)|.
\end{equation}
According to Proposition \ref{prop_aho_gen}, we have $ A_{11}=a$, and $P= \lambda \alpha$,
and the coefficients $h_i$ are
\[ h_1= \frac{2 \lambda}{\alpha -a} , \quad h_2=
\frac{\lambda}{\lambda-\alpha}  ,\quad h_3= \frac{2 \lambda}{\alpha
  +a}. \]
We choose to take $q_1=q_3=\lambda$, $q_2=0$. Consequently we find
\begin{equation}\label{gi33}
g_1=\frac{\lambda(1-\rho \alpha)}{2(\lambda+a)}+\frac{\rho(\lambda+a)}{2}
,\quad g_2=-\frac{\rho(\lambda \alpha - a^2)}{2a}, \quad g_3=-
\frac{\lambda(1-\rho \alpha)}{2(\lambda-a)}-\frac{\rho(\lambda-a)}{2}.
\end{equation}
We have
\begin{equation}\label{GAMMA-1}
\left\{ \begin{array}{l}
\Gamma_1(u)=\displaystyle \frac{\lambda(1-\rho
  \alpha)}{2(\lambda+a)}M_1(u)+\displaystyle \frac{\rho (\alpha-a)}{4
  \lambda}(au -F(u)), \\ \\
\Gamma_2(u)=- \displaystyle \frac{\rho \lambda
  \alpha}{2a}M_2(u)+\displaystyle \frac{\rho(\lambda-\alpha)}{2
  \lambda}( au -F(u)), \\ \\
\Gamma_3(u)=-\displaystyle \frac{\lambda(1-\rho
  \alpha)}{2(\lambda-a)}M_3(u)+\displaystyle \frac{\rho (\alpha+a)}{4
  \lambda}(au -F(u)) .
\end{array}
\right.
\end{equation}
For $1 \leq i \leq 3$ we have
\[ 
\beta^i_{\pm 1}=\frac{1}{2}\left( 1-\frac{\Dt}{2} \pm g_i - \beta^i_0\right),\quad \Bcal^i_{\pm 1}=\frac{1}{2}\left( M_i(u)(1-\frac{\Dt}{2}) \pm
  \Gamma_i(u) - \Bcal^i_0(u) \right),
\]
so it remains to determine the $\beta^i_0$ and $\Bcal^i_0(u)$. As for
the $2\times2$ case we use monotonicity criteria for those choices. From now
on, we use the fact that $a >0$ and we consider only $u \in [0,1]$, as
this will be satisfied in our numerical test. We have then 
\begin{eqnarray}
-a \leq F^\prime(u) \leq a, &\label{esti-Ap} \\
0<1-\displaystyle \frac{\alpha}{\lambda}\leq M^\prime_2 (u) \leq 1-\frac{\alpha-a}{\lambda}< 1,&
\label{esti-MP2} \\ 
0 < \frac{\alpha-a}{2\lambda} \leq M^\prime_i (u) \leq \frac{\alpha+a}{2\lambda} < 1, & i=1, \; i=3. \label{esti-MP}
\end{eqnarray}
Hereafter we study the monotonicity conditions for
  each of the three equations obtained by applying the general
  numerical scheme \eqref{numdiag} to this particular $3\times3$
  case. We denote 
  \[ 
  \mu=(\Dx+2a)(\lambda+a)-\lambda \alpha,
  \]
  and
  \[\nu_1=\frac{\lambda}{2(\lambda-a)}\left(1-\frac{\alpha-a}{2\lambda}\right), \quad \nu_2=- \frac{\lambda \alpha}{\lambda-a}\left(1-\frac{\alpha-a}{2\lambda}\right)+\frac{a(\alpha+a)}{\lambda}+\lambda-a.
  \]
\begin{prop}(Monotonicity).\label{mono-33}
Assume $a >0$, $\lambda> 2a$. We suppose that the following conditions
are satisfied:
\begin{eqnarray}
if \; \; \nu_2 >0  \quad then & \quad  \Dx \leq \min \left(\displaystyle \frac{\lambda \alpha}{\lambda+a},\frac{\lambda}{\nu_1+\frac{\nu_2}{\alpha}} \right),\\
if \; \; \nu_2 \leq 0  \quad then& \quad  \Dx \leq \min \left(\displaystyle \frac{\lambda \alpha}{\lambda+a},\displaystyle \frac{\lambda}{\nu_1}\right) ,
\end{eqnarray}
\begin{equation}\label{rho33}
\rho \leq \min\left(\displaystyle \frac{1}{\alpha}, \displaystyle \frac{1}{\lambda + \Dx },\displaystyle  \frac{2a  }{a \Dx + \lambda \alpha},\displaystyle  \frac{2a (\lambda-\alpha) }{a(\lambda-\alpha) \Dx +\alpha( \lambda^2+\lambda-2a^2)}
\right),
\end{equation}
\begin{equation}\label{rho331}
if \; \; \mu >0  \quad then \quad  \rho \leq \displaystyle \frac{\lambda+2a}{\mu},
\end{equation}
\begin{equation}\label{dx33}
\Dt \leq \displaystyle \frac{1}{1+\displaystyle \max_{u \in [0,1]}|\Gamma^\prime_2(u)-g_2|}.
\end{equation}
Under the assumptions of Theorem \ref{prop_aho_gen}, scheme
\eqref{numcd} for the considered $3\times 3$ case is monotone for the choices:
\begin{equation}
\beta^1_{0}=1-\frac{\Dt}{2}-g_1, \quad \beta^2_0=1-\frac{\Dt}{2}+g_2,  \quad \beta^3_{0}=1-\frac{\Dt}{2}+g_3,
\end{equation}
\begin{eqnarray}
\Bcal_0^1(u)=M_1(u)(1-\frac{\Dt}{2})-\Gamma_1(u), \quad \Bcal_0^3(u)=
M_3(u)(1-\frac{\Dt}{2})+\Gamma_3(u), 
\end{eqnarray}
and $\Bcal^2_{0}$ is a continuous function such that
 \begin{equation}\label{mono332}
  -(1-M_2^\prime(u))(1-\frac{\Dt}{2})- |\Gamma_2^\prime(u)-g_2| = (\Bcal^2_{0})^\prime(u)-\beta^2_{0}.
 \end{equation}
 Such a function exists.
\end{prop}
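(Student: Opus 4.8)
The strategy is to translate the general monotonicity conditions \eqref{monotone} into explicit inequalities for each of the three equations of the $3\times 3$ scheme, using the formulas \eqref{gi33}, \eqref{GAMMA-1} for $g_i$ and $\Gamma_i$, and then to verify that every one of these inequalities holds under the stated bounds on $\Dx$, $\rho$ and $\Dt$. As in the $2\times 2$ case (Proposition~\ref{prop_monot}), we first express $\beta^i_{\pm 1}$ and $\Bcal^i_{\pm1}(u)$ via consistency in terms of the still-free parameters $\beta^i_0$, $\Bcal^i_0(u)$; the proposed choices of $\beta^i_0$ and $\Bcal^i_0(u)$ for $i=1,3$, together with the defining relation \eqref{mono332} for $(\Bcal^2_0)'$, are made precisely to saturate the delicate sign conditions, leaving only the CFL-type inequalities to check.

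First I would handle the first condition in \eqref{monotone}, namely $(\Bcal^i_{l})'(u)\ge 0$ for $l=-1,0,1$ and the second condition $1-\rho q_i+\Dt((\Bcal^i_0)'(u)-\beta^i_0)\ge 0$. For $i=1,3$ the choice of $\Bcal^i_0$ gives $(\Bcal^i_0)'=M_i'(1-\Dt/2)\mp\Gamma_i'$, and using the bounds \eqref{esti-Ap}--\eqref{esti-MP} one checks positivity; the bound $\Dx\le\lambda\alpha/(\lambda+a)$ ensures the relevant coefficients keep the right sign (this is where $\nu_1,\nu_2$ enter, controlling $(\Bcal^3_{-1})'$ through the combination $M_3'$ versus $\Gamma_3'$). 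For $i=2$, the relation \eqref{mono332} is exactly what forces the awkward term $(\Bcal^2_0)'-\beta^2_0$ to equal $-(1-M_2')(1-\Dt/2)-|\Gamma_2'-g_2|$, and then $(\Bcal^2_{\pm1})'\ge0$ reduces to $M_2'(1-\Dt/2)\mp(\Gamma_2'-g_2)\cdot(\pm1)\ge\ldots$, i.e. to a sign condition on $\Gamma_2'-g_2$ that is absorbed by the absolute value; the second condition for $i=2$ becomes $1-\rho q_2+\Dt(-1-|\Gamma_2'-g_2|)\ge0$ with $q_2=0$, which is precisely \eqref{dx33}. Existence of a continuous $\Bcal^2_0$ satisfying \eqref{mono332} is immediate by integrating the right-hand side, which is continuous in $u$.

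Next I would treat the third condition in \eqref{monotone}, the two inequalities involving $\rho\lambda_i/2\pm\rho q_i/2+\Dt((\Bcal^i_{\mp1})'-\beta^i_{\mp1})$. Substituting the consistency expressions and the chosen $\beta^i_0$, these collapse to the requirement that certain combinations of $\rho$, $\Dx$, $\alpha$, $a$, $\lambda$ be nonnegative; the four terms in the minimum defining \eqref{rho33} are exactly these four thresholds (coming from the $\pm$ branches of the $i=1$ and $i=3$ equations, with $\lambda_1=-\lambda$, $\lambda_3=\lambda$, $q_1=q_3=\lambda$), while \eqref{rho331} with the quantity $\mu=(\Dx+2a)(\lambda+a)-\lambda\alpha$ is the one branch that can change sign, hence the conditional form. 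For the $i=2$ equation, since $\lambda_2=0$ and $q_2=0$, the third condition reads simply $\Dt((\Bcal^2_{\mp1})'-\beta^2_{\mp1})\ge0$, which again follows from \eqref{mono332} and the definitions.

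\textbf{The main obstacle} I anticipate is bookkeeping rather than conceptual: carefully carrying out the algebra that turns each raw inequality from \eqref{monotone} into one of the clean thresholds appearing in \eqref{rho33}--\eqref{dx33}, keeping track of the signs of $g_i$ and $\Gamma_i'(u)$ over $u\in[0,1]$ (using $F(u)=a(u-u^2)$, so $F'(u)=a(1-2u)\in[-a,a]$ and $au-F(u)=au^2\in[0,a]$), and in particular identifying which intermediate quantities can vanish or change sign — this is what produces the case distinctions on $\nu_2$ and on $\mu$. The delicate point is the third equation ($i=3$): because $\lambda_3=+\lambda$ and $g_3<0$, the branch with coefficient $-\rho\lambda/2+\rho q_3/2$ is the tight one, and showing that the chosen $\beta^3_0=1-\Dt/2+g_3$ together with the bound on $\Dx$ through $\nu_1,\nu_2$ still yields nonnegativity requires the most careful estimate. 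Once all the elementary inequalities are verified under the hypotheses, the eight monotonicity conditions per equation hold and the scheme is monotone, completing the proof.
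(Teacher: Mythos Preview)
Your overall plan matches the paper's approach: handle $i=1,3$ along the lines of the $2\times 2$ case (the paper explicitly says this and does not detail it), and treat $i=2$ separately via the defining relation \eqref{mono332}. Your observation that \eqref{mono332} saturates \eqref{mono-21} and that \eqref{dx33} then yields \eqref{mono-22} is exactly right, and your argument for the existence of $\Bcal^2_0$ by integrating the continuous right-hand side of \eqref{mono332} is fine (the paper carries out this integration explicitly, splitting according to the sign of $\Gamma_2'(u)-g_2$, which changes twice on $[0,1]$).

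There is, however, a genuine gap. You never verify $(\Bcal^2_0)'(u)\ge 0$, and this is precisely the step the paper singles out as ``it remains to satisfy''. From \eqref{mono332} and $\beta^2_0=1-\tfrac{\Dt}{2}+g_2$ one gets
\[
(\Bcal^2_0)'(u)=M_2'(u)\Bigl(1-\tfrac{\Dt}{2}\Bigr)+g_2-|\Gamma_2'(u)-g_2|,
\]
and since $g_2<0$ this is far from automatic. The paper handles it by a case split on the sign of $\Gamma_2'(u)-g_2$: in the second case one needs $1-\tfrac{\Dt}{2}-\tfrac{\rho\lambda\alpha}{2a}\ge 0$, which is exactly the third bound in \eqref{rho33}; in the first case one obtains a lower bound $\tfrac{\lambda-\alpha}{\lambda}(1-\sigma\rho)$ with $\sigma>0$, and $\rho\le 1/\sigma$ is the fourth bound in \eqref{rho33}. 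Consequently your assertion that ``the four terms in the minimum defining \eqref{rho33} are exactly these four thresholds coming from the $\pm$ branches of the $i=1$ and $i=3$ equations'' is incorrect: two of those bounds are there specifically for the $i=2$ equation, and the paper treats $i=2$ (not $i=3$) as the delicate case.
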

\begin{proof}
We do not detail the proof for equations 1 and 3, as it follows the lines of the $2\times 2$ case. Actually, under the above assumptions we have 
\[ g_1 >0, \quad \Gamma^\prime_1(u) \geq 0, \quad g_3<0, \quad \Gamma^\prime_3(u)\leq 0. \]
Then we obtain a monotone TAHO scheme on those equations.

The treatment of the second equation is quite different. We first note
that $g_2<0$ but the sign of $\Gamma^\prime_2(u)$ does not depend on
the parameters of the discretization. The monotonicity conditions are:
\begin{eqnarray}
(\Bcal ^2_{l})^\prime(u) \geq 0,& l= -1, 0 , 1,  \label{mono-20} \\
- \beta^2_{\pm 1 } +  (\Bcal^2_{ \pm 1})^\prime(u) \geq 0,& \label{mono-21} \\ 
1- \Dt (\beta^2_{0} -(\Bcal^2_{0})^\prime(u) ) \geq
0.&  \label{mono-22} 
\end{eqnarray}
The inequality (\ref{mono-21}) can be written as
\begin{equation}
-(1-M_2^\prime(u))(1-\frac{\Dt}{2})- |\Gamma_2^\prime(u)-g_2| \geq (\Bcal^2_{0})^\prime(u)-\beta^2_{0},
\end{equation}
which is implied by equality (\ref{mono332}).
In the case under consideration, it is straightforward to determine the sign of $\Gamma^\prime_2(u)-g_2$ with respect to $u$. In our numerical experiment for example, we have $u_0 \in ]0, 0.5[$ and $u_1 \in ]0.5,1[$ such that 
\begin{eqnarray}
\Gamma^\prime_2(u)-g_2 > 0 &\quad {\rm in } \quad &[0,u_0[ \cup ]u_1,1], \nonumber \\
 \Gamma^\prime_2(u)-g_2 < 0 &\quad {\rm in } \quad &]u_0,u_1[ . \nonumber 
 \end{eqnarray}
We can therefore construct a continuous function $\Bcal^2_{0}$ such that
\begin{eqnarray}
( \Bcal^2_{0})^\prime(u)=M_2^\prime(u)(1-\frac{\Dt}{2}) - \Gamma^\prime_2(u)+2g_2 & \quad {\rm when  } \quad &   \Gamma^\prime_2(u)-g_2 > 0,\nonumber \\
( \Bcal^2_{0})^\prime(u)=M_2^\prime(u)(1-\frac{\Dt}{2}) + \Gamma^\prime_2(u) & \quad {\rm when } \quad &\Gamma^\prime_2(u)-g_2 <0 . \nonumber 
 \end{eqnarray}
 We set 
 \[ \beta^2_{-1}=-g_2,\quad \beta^2_0=1-\frac{\Dt}{2}+g_2,  \quad \beta^2_1=0.\]
 Therefore 
 \[ -(1-M_2^\prime(u))(1-\frac{\Dt}{2})- |\Gamma_2^\prime(u)-g_2| = (\Bcal^2_{0})^\prime(u)-\beta^2_{0}<0.\]
 Consequently we have (\ref{mono-21}), (\ref{mono-22}) by (\ref{dx33}), and (\ref{mono-20}) for $l=\pm 1$. It remains to satisfy 
 \begin{eqnarray}
(\Bcal ^2_{0})^\prime(u) \geq 0. \label{mono-220}
\end{eqnarray}
{\bf First case:} $\Gamma^\prime_2(u)-g_2 >0$.

By (\ref{esti-Ap})-(\ref{esti-MP2}) we have
\begin{eqnarray}
(\Bcal ^2_{0})^\prime(u) &\geq &\displaystyle \frac{\lambda-\alpha}{\lambda} \left( 1 -\frac{\Dt}{2}+\frac{\rho \lambda \alpha}{2a}\right)-\frac{\rho \alpha}{a \lambda}(\lambda^2-a^2) \nonumber \\
& \geq & \displaystyle \frac{\lambda-\alpha}{\lambda} \left( 1 - \sigma \rho \right). \nonumber
 \end{eqnarray}
One can prove that  $\sigma > 0$. We obtain (\ref{mono-220}) by (\ref{rho33}). 

 {\bf Second case:} $\Gamma^\prime_2(u)-g_2 <0$.
\begin{eqnarray}
(\Bcal ^2_{0})^\prime(u) &\geq & M^\prime_2(u)\left( 1 -\frac{\Dt}{2}-\frac{\rho \lambda \alpha}{2a}\right) \geq0\nonumber \end{eqnarray}
by (\ref{rho33}).
\end{proof}

\section{The linear case}\label{sec_lin}
\subsection{Numerical schemes}
Here we want to apply the argumentation of the above sections to the linear case, first considered in work \cite{DMR}.
We shall focus on problem \eqref{cv_sys} for $F(u)=a u$. We set $\alpha=\mu a$ and $\beta=\mu$ and we
study the following problem
\begin{equation}\label{probtest}
\left\{\begin{array}{l}
u_t+ \alpha u_x+ z_x=0,\\
\\
z_t+ u_x-\alpha z_x=-\beta z.
\end{array}\right.
\end{equation}
 
The numerical approximation given in \eqref{numcd}, here becomes
\begin{equation}\label{schemeU}
\begin{array}{l}
\Frac{U^{n+1}_j-U^n_j}{\Dt}+\Frac{A}{2\Dx}\left(U_{j+1}^n-U_{j-1}^n\right)
-\Frac{Q}{2\Dx}\left(U_{j+1}^n-2U_j^n+U_{j-1}^n\right)\\
\\ \quad
=\Bcal_{-1} U_{j-1}^n+\Bcal_0 U_j^n+\Bcal_1 U_{j+1}^n,
\end{array}
\end{equation} 
where $U=(u,z)^T$, for $\pm\xi=\pm\sqrt{1+\alpha^2}$ be the eigenvalues of matrix $A$, $Q=diag{(\xi, \xi)}$ and 
$\Bcal_{-1,0,1}=(\beta_{ij}^{-1,0,1})_{i,j=1,2}$ are the matrix of constant coefficients for the source term approximation.

First of all we shall analyze the decay properties of the local truncation error for the numerical approximations described in \cite{DMR}. 

By Taylor expansion, the local truncation error for the numerical approximation (\ref{schemeU}) is 
given by
\begin{equation}\label{trunc_lin}
\left\{\begin{array}{rl}
\Tcal_u=&\frac{\Dt}{2}u_{tt} -\Dx\left[\frac{\xi}{2}u_{xx}
+(\beta_{11}^1-\beta_{11}^{-1})u_x
+(\beta_{12}^1-\beta_{12}^{-1})z_x+c_{11}u+c_{12}z\right]\\ \\ &+\mathcal{O}(\Dx^2+\Dt^2),\\
\\
\Tcal_z=&\frac{\Dt}{2}z_{tt}-\Dx\left[\frac{\xi}{2}z_{xx}
+(\beta_{21}^1-\beta_{21}^{-1})u_x
+(\beta_{22}^1-\beta_{22}^{-1})z_x+c_{21}u+c_{22}z\right]\\ \\ &+\mathcal{O}(\Dx^2+\Dt^2),
\end{array}\right.
\end{equation}
where the $(c_{ij})_{i,j=1,2}$ constants were defined in \cite{DMR}.
Let $\Dt/\Dx=\rho$ be fixed. By relations
$$ u_{tt} = \alpha^2 u_{xx}-(z_t-\alpha z_x)_x, \quad z_{tt} = \xi^2 z_{xx} -\beta (z_t+\alpha z_x),
$$
we get, for some popular schemes, the following expansions.
\\
\begin{itemize}
\item[(UP)] for the source term point wise approximation, we have
\begin{equation}\label{up_lin}
\Bcal_1-\Bcal_{-1}=0, \quad C=0,
\end{equation}
\begin{equation}\label{trunc_up}
\left\{\begin{array}{rl}
\Tcal_u=&\frac{\Dx}{2}\left[ (\rho\alpha^2-\xi) u_{xx} - \rho  (z_t-\alpha z_x)_x \right]+\mathcal{O}(\Dx^2)\ \sim \ \Dx\ t^{-3/2},\\
\\
\Tcal_z=&\frac{\Dx}{2}\left[ \xi(\rho\xi -1) z_{xx} - \rho\beta  (z_t+\alpha z_x) \right]+\mathcal{O}(\Dx^2)\ \sim \ \Dx\ t^{-3/2}.\\
\end{array}\right.
\end{equation}
\item[(ROE)] for the upwinding of the source term,  we have
\begin{equation}\label{g_roe_lin}
 \Bcal_1-\Bcal_{-1}=\Frac{\beta}{2\xi}\left(\begin{array}{cc}
	0 & 1\\
	0  & -\alpha
\end{array}\right), \quad C=0,
\end{equation}
\begin{equation}
\left\{\begin{array}{rl}
\Tcal_u=&\frac{\Dx}{2}\left[(\rho\alpha^2-\frac{(\xi^2-1)}{\xi})u_{xx}+(\frac{1}{\xi}-\rho)\left(z_{t}-\alpha z_{x}\right)_x\right]+\mathcal{O}(\Dx^2)\ \sim \ \Dx\ t^{-3/2},\\
\\
\Tcal_z=&\frac{\Dx}{2}\left[\xi(\rho\xi-1)z_{xx}-\beta(\rho z_t+\alpha(\rho-\frac{1}{\xi})z_x)\right]+\mathcal{O}(\Dx^2)\ \sim \ \Dx\ t^{-3/2}.
\end{array}\right.
\end{equation}
\item[(AHO2p)] for the Asymptotic High Order scheme given in \cite{DMR},  we have
\begin{equation}\label{AHO2p}
 \Bcal_1-\Bcal_{-1}=\Frac{\beta \xi}{2}\left(\begin{array}{cc}
	0 & 1\\
	1  & -2\alpha
\end{array}\right), \quad C=\Frac{\beta^2 \xi}{2}\left(\begin{array}{cc}
	0 & 0\\
	0  & 1
\end{array}\right),
\end{equation}
\begin{equation}
\left\{\begin{array}{rl}
\Tcal_u=&\frac{\Dx}{2}\left[\rho\alpha^2 u_{xx} +(\xi-\rho)\left(z_{t}-\alpha z_{x}\right)_x\right]+\mathcal{O}(\Dx^2)\ \sim \ \Dx\ t^{-3/2},\\
\\
\Tcal_z=&\frac{\Dx}{2}\left[\xi(\rho\xi-1)z_{xx}-\beta(\rho+\xi)(z_t+\alpha z_x)\right]+\mathcal{O}(\Dx^2)\ \sim \ \Dx\ t^{-3/2}.
\end{array}\right.
\end{equation}
\end{itemize}

Let us now go back to the Time-AHO schemes.
According to the discussion of section \ref{sec_decayTronc_gen}, conditions given in propositions \ref{prop_aho_gen}-\ref{prop_monot}, here give rise to the following choice,  
\begin{equation}\label{td_lin}
 \Bcal_1-\Bcal_{-1}=\Frac{\beta\rho}{2}\left(\begin{array}{cc}
	0 & \frac{\xi}{\rho}-\alpha^2\\
	1  & -2\alpha
\end{array}\right),\quad\quad C=\Frac{\beta^2 \rho}{2}\left(\begin{array}{cc}
	0 & 0\\
	0  & 1
\end{array}\right).
\end{equation}
For the local truncation error it then holds,
\begin{equation}\label{decayfinal}
\left\{\begin{array}{rl}
\Tcal_u=&-\Frac{\Dx}{2}\left[\xi(1-\rho\xi)\left(-z_{xt}+\alpha z_{xx}\right)\right]+\mathcal{O}(\Dx^2)\ \sim \ \Dx\ t^{-2}
,\\
\\
\Tcal_z=&-\Frac{\Dx}{2}\left[\xi(1-\rho\xi) z_{xx}\right]+\mathcal{O}(\Dx^2)\ \sim \ \Dx\ t^{-2}.
\end{array}\right.
\end{equation}

\begin{remark}\label{remark-lin-a0}
 As for the non-linear case, for $\alpha=0$, the Time-AHO scheme
 reduces to the Roe approximation, which in that case decays like in \eqref{decayfinal}.
\end{remark}

\subsection{Modified equation}\label{ahoParab}
Here we wish to better understand the qualitative behavior of the numerical methods described in the previous section by applying the {\it modified equation method} (see for instance \cite{LeVbook92}). From Taylor expansion stated in \eqref{trunc_lin}, for all numerical schemes described in Section \ref{sec_lin}, we get the following modified system
\begin{equation}\label{modif_eq}
\left\{\begin{array}{rl}
u_t+\alpha u_x+z_x=&-\frac{\Dt}{2}u_{tt} +\Dx\left[\frac{\xi}{2}u_{xx}
+\gamma_{11} u_x+\gamma_{12}z_x+c_{11}u+c_{12}z\right],\\
z_t+u_x-\alpha z_x+\beta z=&-\frac{\Dt}{2}z_{tt}+\Dx\left[\frac{\xi}{2}z_{xx}
+\gamma_{21} u_x
+\gamma_{22} z_x+c_{21}u+c_{22}z\right].
\end{array}\right.
\end{equation}
Since we are  interested in long time simulations, we apply the decay rates results given in the analytical Section \ref{decays} to the derivatives terms of problem \eqref{modif_eq}. As time increases, we can then take into account as modified equation the following {\it asymptotic modified system},   
\begin{equation}
\left\{\begin{array}{rl}
u_t+\alpha u_x+z_x=&-\frac{\Dt}{2}u_{tt} +\Dx\left[\frac{\xi}{2}u_{xx}
+\gamma_{11} u_x+\gamma_{12}z_x+c_{11}u+c_{12}z\right],\\
u_x+\beta z=&\Dx\left[\gamma_{21} u_x +c_{21}u+c_{22}z\right].
\end{array}\right.
\end{equation}
For all schemes described in Section \ref{sec_lin}, we get in the second equation $\left[\gamma_{21} u_x +c_{21}u+c_{22}z\right]=0$ and then, for all of them the asymptotic modified problem becomes
\begin{equation}\label{modif_eq_asympt}
\left\{\begin{array}{rl}
u_t+\alpha u_x=&\left(\frac{1}{\beta}+\Dx \Dcal\right) u_{xx} + O(\Dx^2),\\
z =& -\frac{1}{\beta}u_x + O(\Dx^2),
\end{array}\right.
\end{equation}
where the constant $\Dcal$ depends on the selected scheme.
\\
Specifically, for UP, ROE and AHO2p we obtain a perturbation of order $O(\Dx)$ on the diffusion term with
$$ \Dcal_{UP} = -\Frac{1}{2}\left(\rho\alpha^2-\xi\right), \quad \Dcal_{ROE} = -\Frac{1}{2}\left(\rho\alpha^2-\Frac{\xi^2-1}{\xi}\right), \quad \Dcal_{AHO2p} = -\Frac{\rho\alpha^2}{2};$$
while, for TAHO we get 
$$\Dcal_{TAHO}=0.$$

The TAHO approximation is then second order accurate on the asymptotic diffusion (Chapman-Enskog) limit
\begin{equation}\label{parabtest}
\left\{\begin{array}{l}
\hat u_t+ \alpha \hat u_x=\frac{1}{\beta}\hat u_{xx},\\
\hat z=-\frac{1}{\beta}\hat  u_x.
\end{array}\right.
\end{equation}

\section{Numerical tests}\label{Sec5}
In this section we show how, for large time simulations,
Time-AHO schemes give better numerical results than standard approximations both for linear and non-linear cases.
\\
From now on we shall call STD the following standard pointwise upwind approximation, for $i=1,...,m$
\begin{equation}\label{std_gen}
\begin{array}{l}
\Frac{f^i_{j,n+1}-f^i_{j,n}}{\Dt}+\Frac{\lambda_i}{2\Dx}\left(f^i_{j+1,n}-f^i_{j-1,n}\right)-\Frac{|\lambda_i|}{2\Dx}\delta_x^2 f^i_{j,n}=M_{i}(u_{j,n})-f^i_{j,n}.
\end{array}
\end{equation} 

For all tests, we 
focus our attention on the numerical error as a function of time:
denoting $(u,Z)$ the conservative-dissipative variables, 
we plot the errors $e_u(t)=\| (u^H-U^h)(t)\|_{L^\infty}$, $e_z(t)=\| (Z^H-Z^h)(t)\|_{L^\infty}$ as the time  $t=n\Dt$ increases, 
where  $(u^H,Z^H)$ is the reference solution obtained by the ROE scheme with  $\Dx=\mathcal{O}(10^{-4})$.

For all schemes, we fix the steps ratio $\rho$ to verify all the CFL conditions;
Since all schemes are of first order approximation, to emphasize the good behaviour of TAHO compared to the others schemes, we compute the numerical solutions $U^h$ by using a quite big grid step $\Dx=\mathcal{O}(10^{-1})$.

We then plot the different approximations of functions $u$ and $Z$ at final time $T=450$, focusing on the point of maximum value of the solution to highlight the differences of the approximations. 
Near it, we show the most interesting plot of the $l^\infty$ errors as a function of time.

Then, given different numerical approximations $U^h$, we look for
constant $C_u$, $\gamma_u$,  $C_z$, $\gamma_z$ which best fit the equality
\begin{equation}\label{decaytest}
e_u(t)=\| (u^H-U^h)(t)\|_{L^\infty}= C_u t^{-\gamma_u}, \quad e_z(t)=\| (Z^H-Z^h)(t)\|_{L^\infty}= C_zt^{-\gamma_z}.
\end{equation}
Given $N$ data points $(t_i,e(t_i))_{i=1,N}$, we shall define $\gamma$ and $C$ as the solution of the following least squares problem,
$$\min_{C,\gamma}\Sum_{i=1}^N |\ln(e(t_i))-\ln(Ct^{-\gamma}) |^2.$$

All numerical results we present show that for standard approximations, such as upwind \eqref{std_gen} and ROE \eqref{roe_gen}, the absolute error $e(t)$, for a fixed space step, decays as
\[ e_u(t)= O( t^{-1/2}), \ e_z(t)= O( t^{-1}),\]
while for the TAHO scheme, it improves of $t^{-1/2}$ on the previous schemes.

\subsection{Results for the linear case}
Let us consider  for system (\ref{probtest}),  the  constant equilibrium state $u=1$ and $z=0$.
As in our previous work \cite{DMR}, we fix $\beta=5$ and we consider a small compactly supported perturbation of this constant solution as initial data.
\begin{equation}
u_0 = \chi_{[-1,1]} \left(- x^2 + 2\right), \quad z_0 = \chi_{[-1,1]} \left(- x^2 + 1\right).
\end{equation}
We then compare the Time-AHO scheme \eqref{td_lin} with the following schemes: the AHO2p scheme \eqref{AHO2p}, the standard first-order point wise upwind scheme \eqref{up_lin} and the ROE scheme \eqref{g_roe_lin}.

\subsubsection{Test case with $\alpha = 1$}\label{linT2}
As expected by our asymptotic analysis, the numerical results show a better performance of the TAHO scheme. In Figure \ref{fig-lin1.1}-(a)-(b) we plot a zoom on the solutions $u$ and $z$ respectively, obtained by applying the different numerical schemes at final time $T$. The solution given by TAHO follows much better than the other the benchmark curve.
 
Moreover, always in Figure \ref{fig-lin1.1}, the two plots $(c)$ and $(d)$ show the time evolution of the $l^\infty$ errors $e_u(t)$ and $e_z(t)$ defined in \eqref{decaytest} for all schemes considered; They show how for the TAHO scheme, as time increases, both errors decay more quickly than the other. 
This result is also confirmed by Table \ref{tab-lin_1.1}, where the values of $\gamma$ and $C$ are computed. Looking at the different values of $\gamma$, it is clear that for the TAHO approximation the decay velocity of the absolute error improves of $t^{-1/2}$ on the previous schemes.

To stress on the good behavior of the TAHO scheme, in Figure \ref{fig-lin2}, we plot the solution $u$ obtained by different numerical approximations with decreasing space step $\Dx$. All schemes considered are of first order approximation, but looking at the numerical curves, it is clear how for large step the TAHO solution follows better the benchmark line than the other.

\begin{figure}[ht]
\scalebox{0.5}{
\begin{tabular}{cc}
\includegraphics{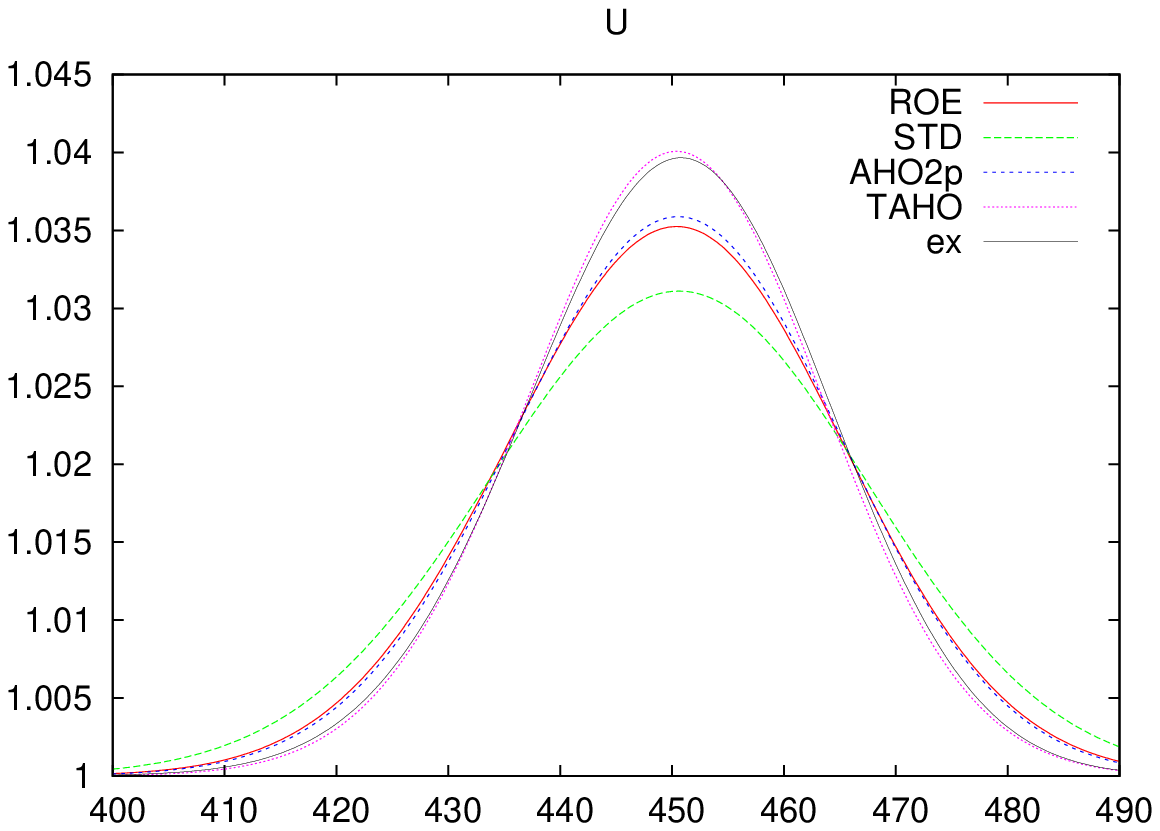}
& 
\includegraphics{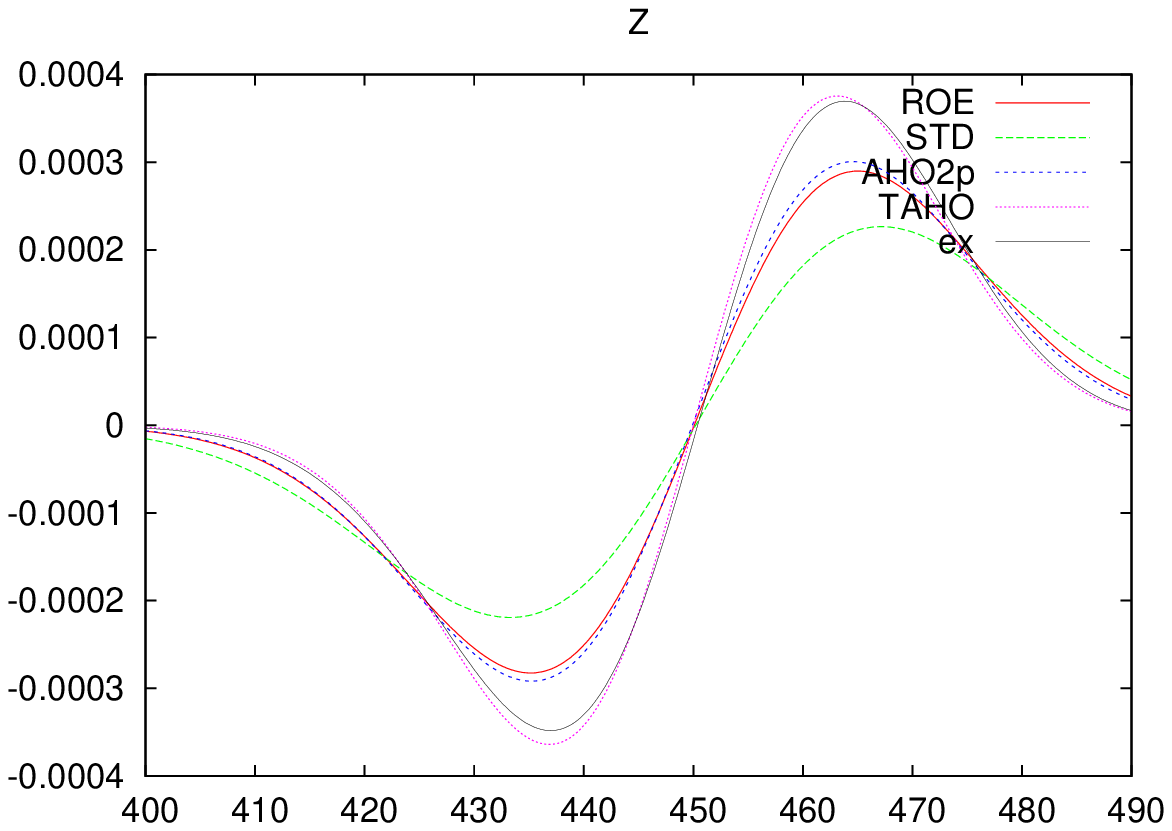}\\
(a) & (b) \\
\includegraphics{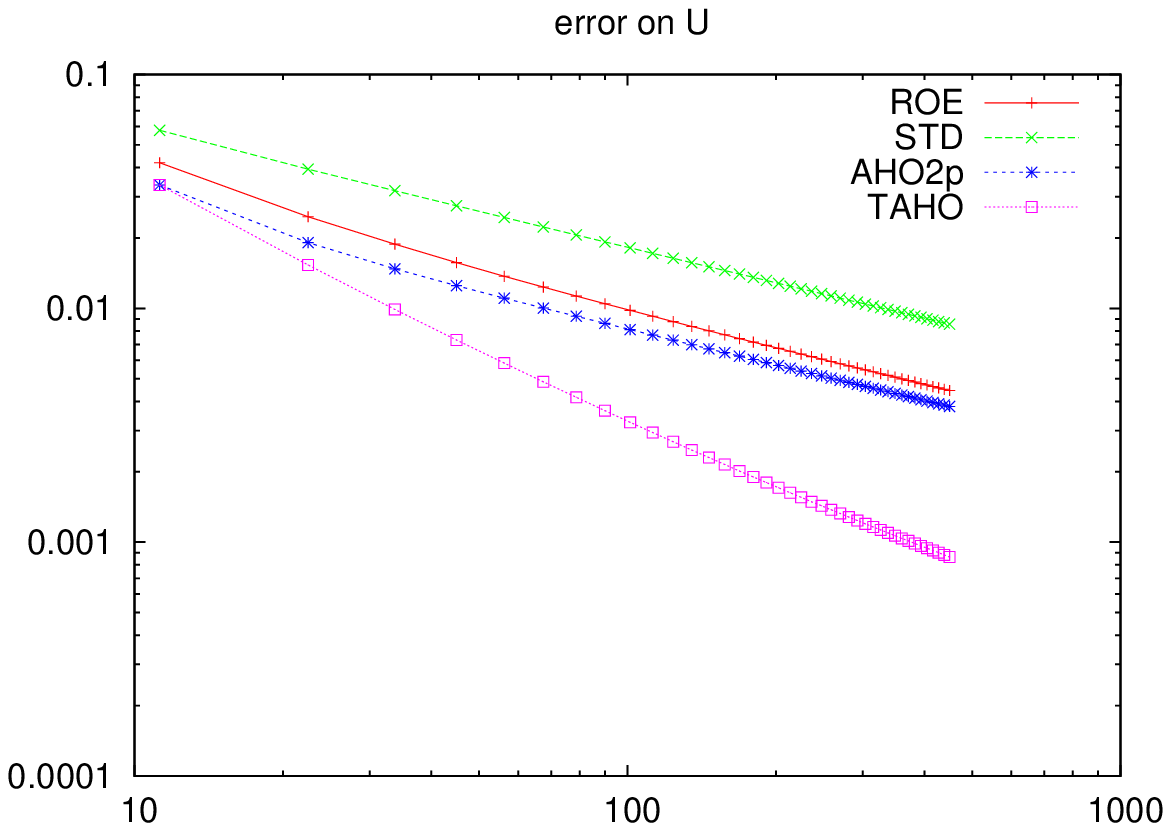}
&
\includegraphics{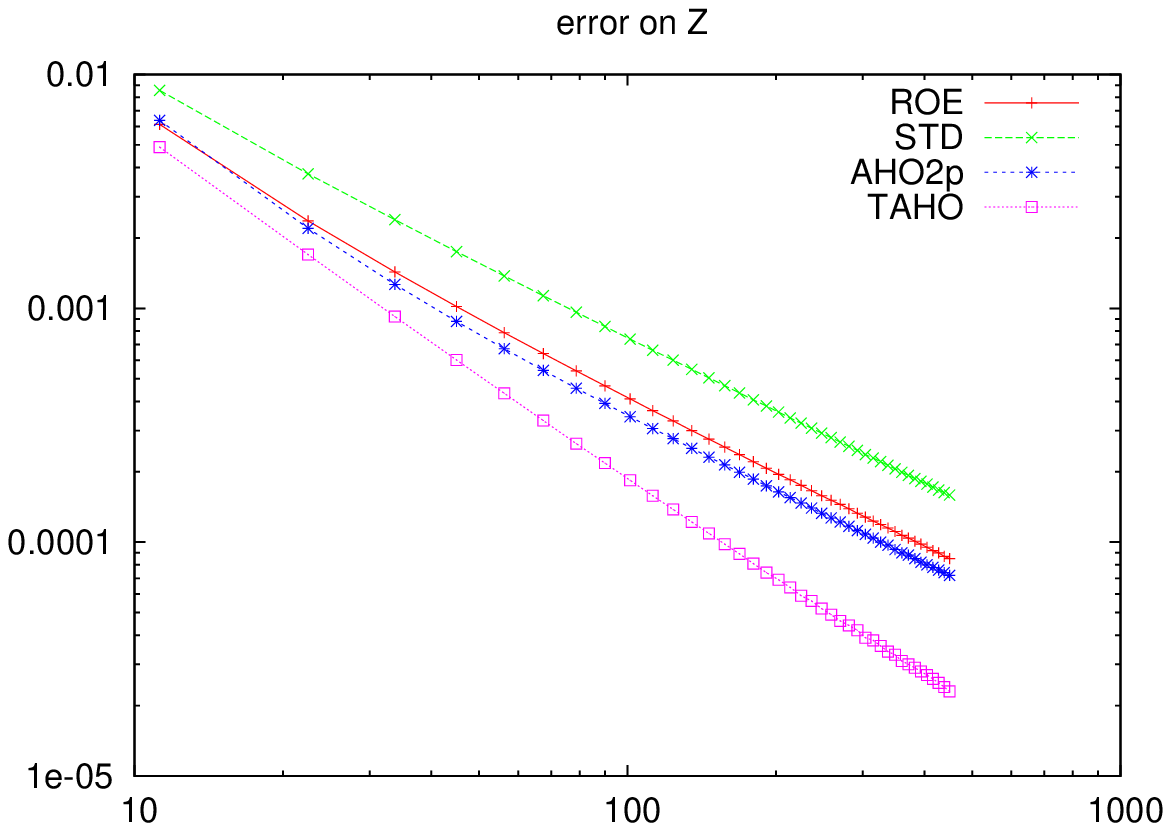}\\
(c) & (d)
\end{tabular}}
\caption{Linear Case Test, section \ref{linT2}. (a)-(b) zoom on the solutions $u$ and $z$ respectively obtained by the different schemes at final time $T$. The plot show a better performance of TAHO scheme. (c)-(d) time evolution of the $l^\infty$ errors $e_u(t)$ and $e_z(t)$ defined in \eqref{decaytest} for the different schemes. As expected by our asymptotic analysis, for the TAHO scheme the absolute errors $e_{u,z}(t)$ decay faster as the time increases.
}
\label{fig-lin1.1}
\end{figure}

\begin{table}[ht]
\begin{center}
\begin{tabular}{|c|c|c|c|c|}\hline
\mbox{scheme} & $C_u$ &  $\gamma_u$ & $C_z$ & $\gamma_z$ \\ \hline
STD & 0.192993 & 0.510798 & 0.100412 &  1.058824 \\\hline
ROE & 0.143847 & 0.573922 & 0.073287 &  1.112843 \\\hline
AHO2p & 0.104897 & 0.547166 & 0.072874 &  1.143070 \\\hline
TAHO & 0.289768 & 0.961310 & 0.141281 &  1.432194 \\\hline
\end{tabular}
\caption{Linear Case Test, section \ref{linT2}. Evaluation of constants $\gamma$ and $C$ for $e_u(t)= C_u t^{-\gamma_u}$ and $e_z(t)= C_z t^{-\gamma_z}$ defined in \eqref{decaytest}. For standard approximations, such as STD and ROE , the numerical results show that the absolute error decays as $e_u(t)= O( t^{-1/2})$ and $e_z(t)= O( t^{-1})$; while, for the TAHO scheme, it improves of $t^{-1/2}$ on the previous schemes.
}
\label{tab-lin_1.1}
\end{center}
\end{table}

\begin{figure}[ht]
\scalebox{0.5}{
\begin{tabular}{ll}
\includegraphics{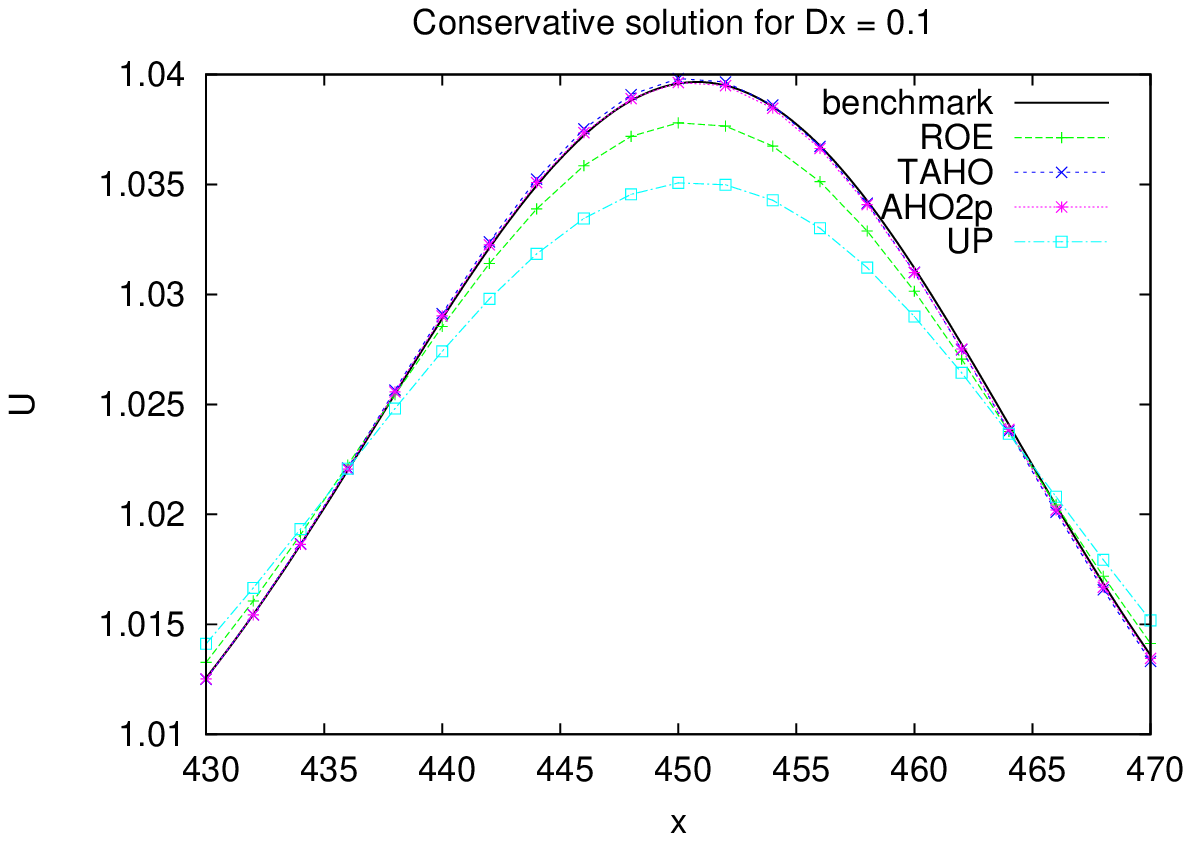}
&
\includegraphics{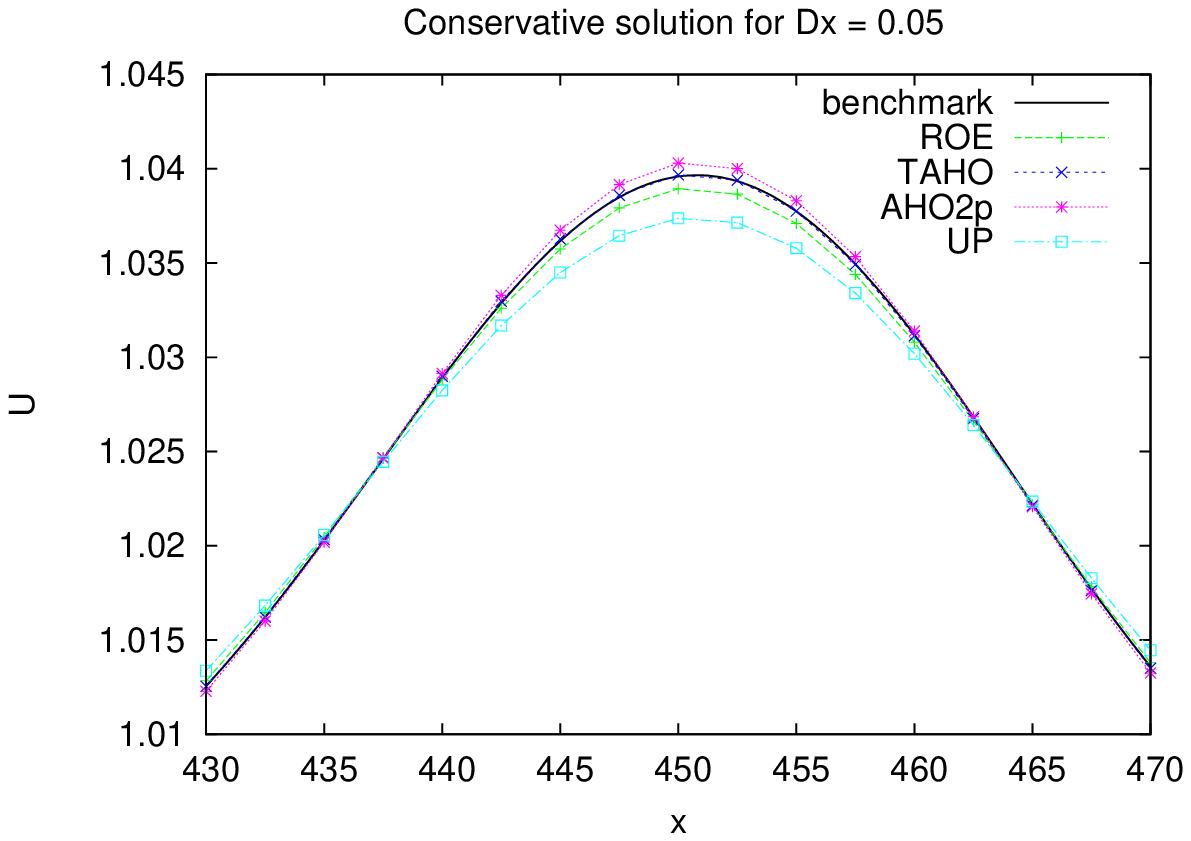}
\\
(a) & (b)\\
\includegraphics{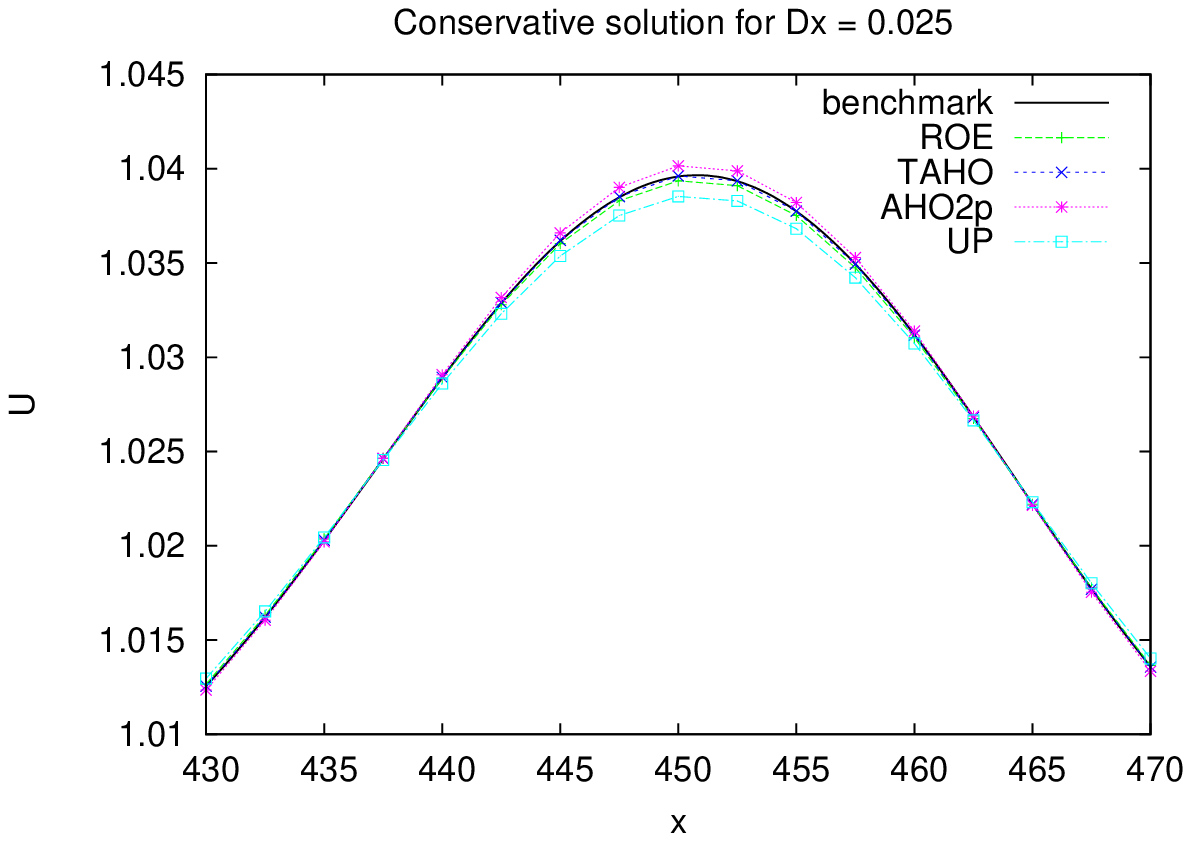}
&
\includegraphics{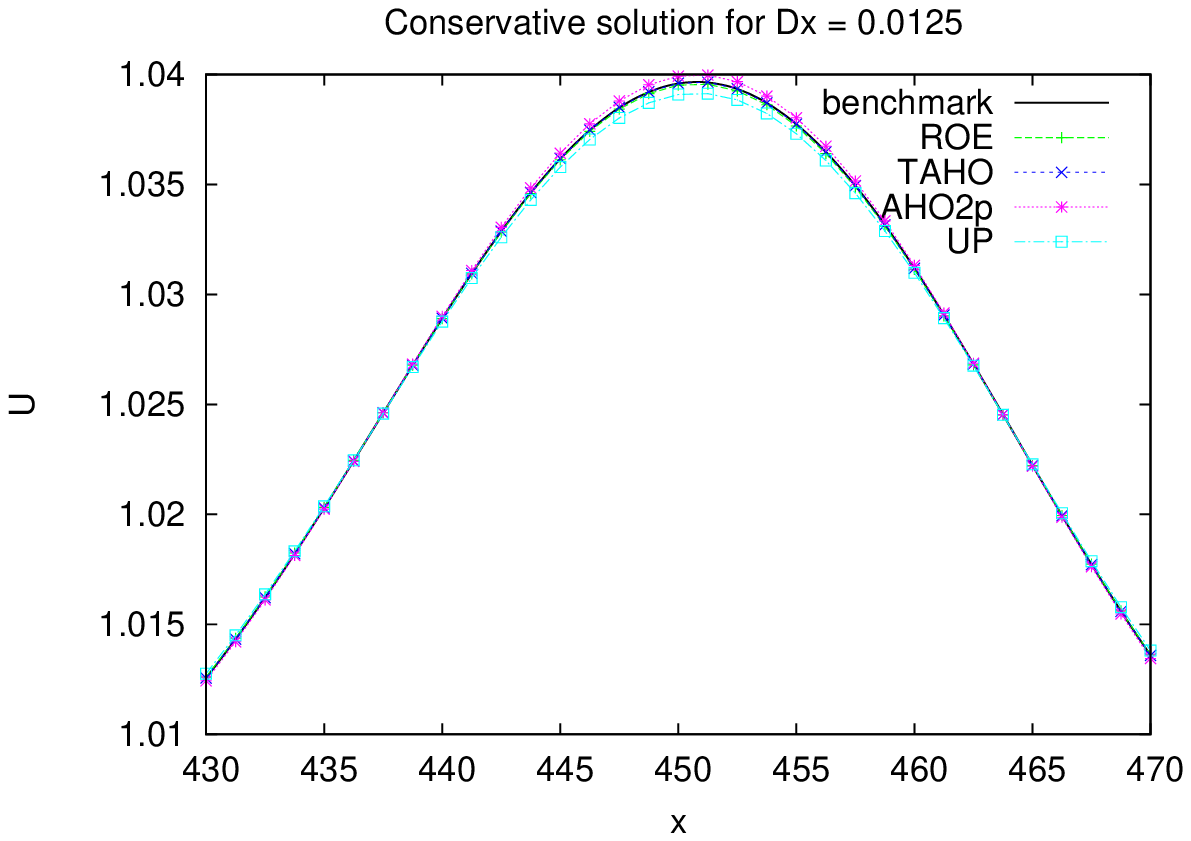}
\\
(c) & (d)
\end{tabular}}
\caption{Linear Case Test, section \ref{linT2}. Zoom on the solutions $U$ obtained by the different schemes at final time $T$ by applying decreasing values of $\Dx$: (a) $\Dx=0.1$, (b) $\Dx = 0.05$, (c) $\Dx = 0.025$ and (d) $\Dx=0.0125$. The numerical solution obtained by TAHO scheme follows better than the others the benchmark curve already with a quite big space step. 
}
\label{fig-lin2}
\end{figure}

\subsubsection{Test case with $\alpha = 0$}\label{linT1}
 As previously observed in Remark \ref{remark-lin-a0}, for $\alpha=0$ the ROE scheme \eqref{g_roe_lin} corresponds to our TAHO approximation. 

For this particular case we can compare the TAHO/ROE scheme with the well-balanced approximation proposed by Gosse and Toscani in  \cite{GT02}, which however is defined only in the case $\alpha=0$. From now on we shall refer to this scheme as WB-GT.

Figure \ref{fig-lin_a0} shows the performances of TAHO/ROE, STD and WB-GT scheme for problem 
\begin{equation}
\left\{\begin{array}{l}
u_t + z_x =0,\\
z_t + u_x =-\beta z,
\end{array}\right.
\end{equation}
with $\beta=5$, at final time $T=450$.

We observe that both TAHO/ROE and WB-GT give better performances than STD. The two numerical solutions obtained by TAHO/ROE and WB-GT showed respectively in Figure \ref{fig-lin_a0}-(a)-(b) are overlapping and the numerical errors in Figure \ref{fig-lin_a0}-(c)-(d) have a similar trend. 
\\
Indeed, according to the discussion of section \ref{sec_decayTronc_gen}, for the local truncation error of the WB-GT approximation it can be shown that 
$$ \Tcal_u \sim \ \Dx \ t^{-2}, \quad \Tcal_z \sim \ \Dx \ t^{-3/2}.$$ 
Then, we observe that the WB-GT scheme is Time-AHO with respect to the conservative variable $u$, while for the dissipative one it is not.
The good behavior of WB-GT scheme may be confirmed by analyzing its order of convergence with respect the asymptotic modified problem \eqref{modif_eq_asympt}. As done in section \ref{ahoParab},
when $t$ goes to $+\infty$, it is possible to show that the WB-GT scheme is second order accurate with respect the asymptotic modified problem
\begin{equation}
\left\{\begin{array}{l}
u_t =  \frac{1}{\beta}u_{xx} + \Frac{\beta\Dx^2}{4(1+\beta\Dx/2)} u_{xx}, \\
z= -\frac{1}{\beta}u_x,
\end{array}\right.
\end{equation}
namely it is of second order with respect the Chapman-Enskog limit. Notice that, for $\alpha\neq0$, it is possible to consider other Well Balanced scheme as in 
\cite{Go00}, but they are not THAO and their asymptotic performances are not of the same order (not shown). 

\begin{figure}[ht]
\scalebox{0.5}{
\begin{tabular}{cc}
\includegraphics{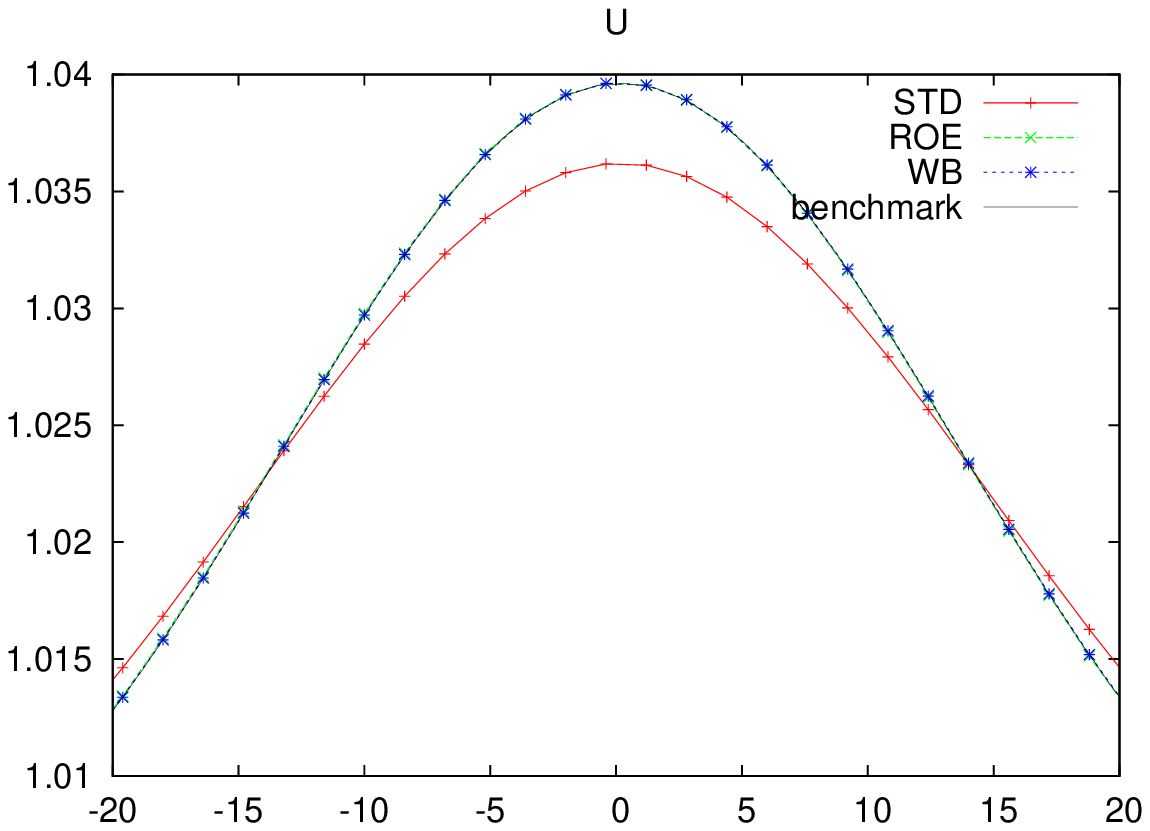}
& 
\includegraphics{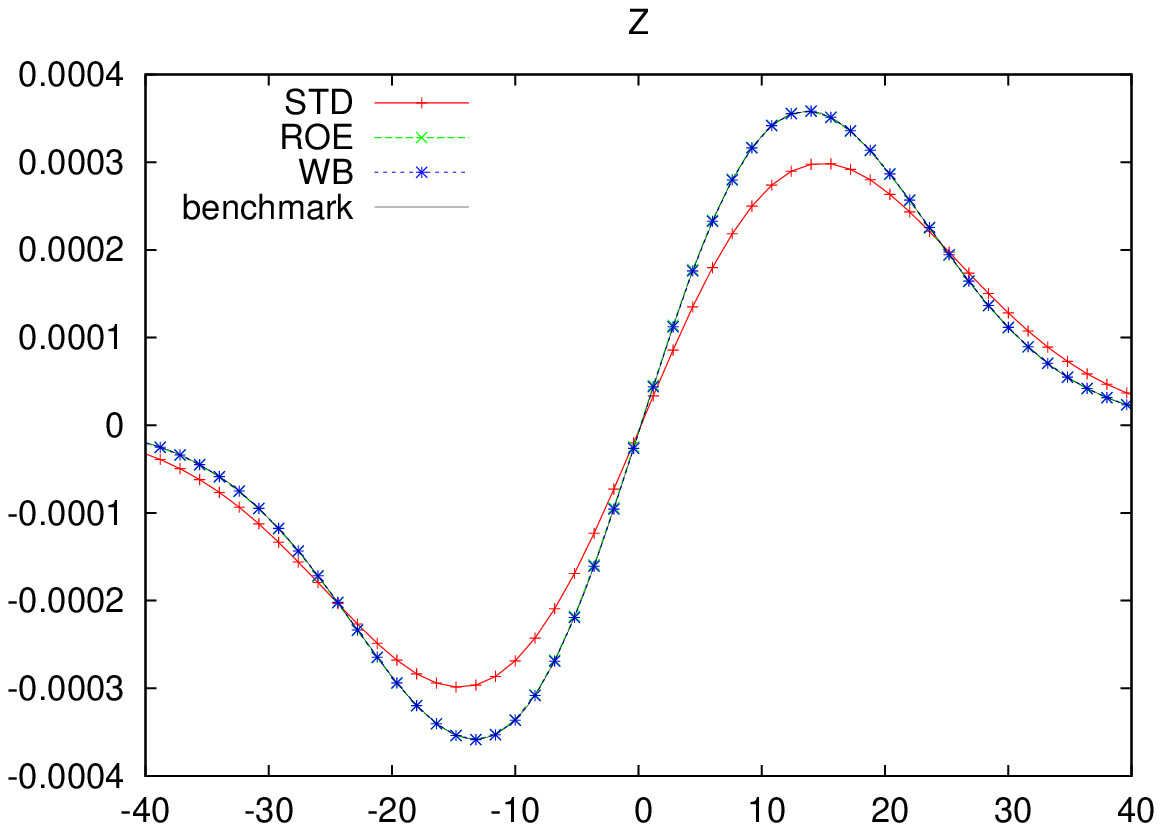}\\
(a) & (b) \\
\includegraphics{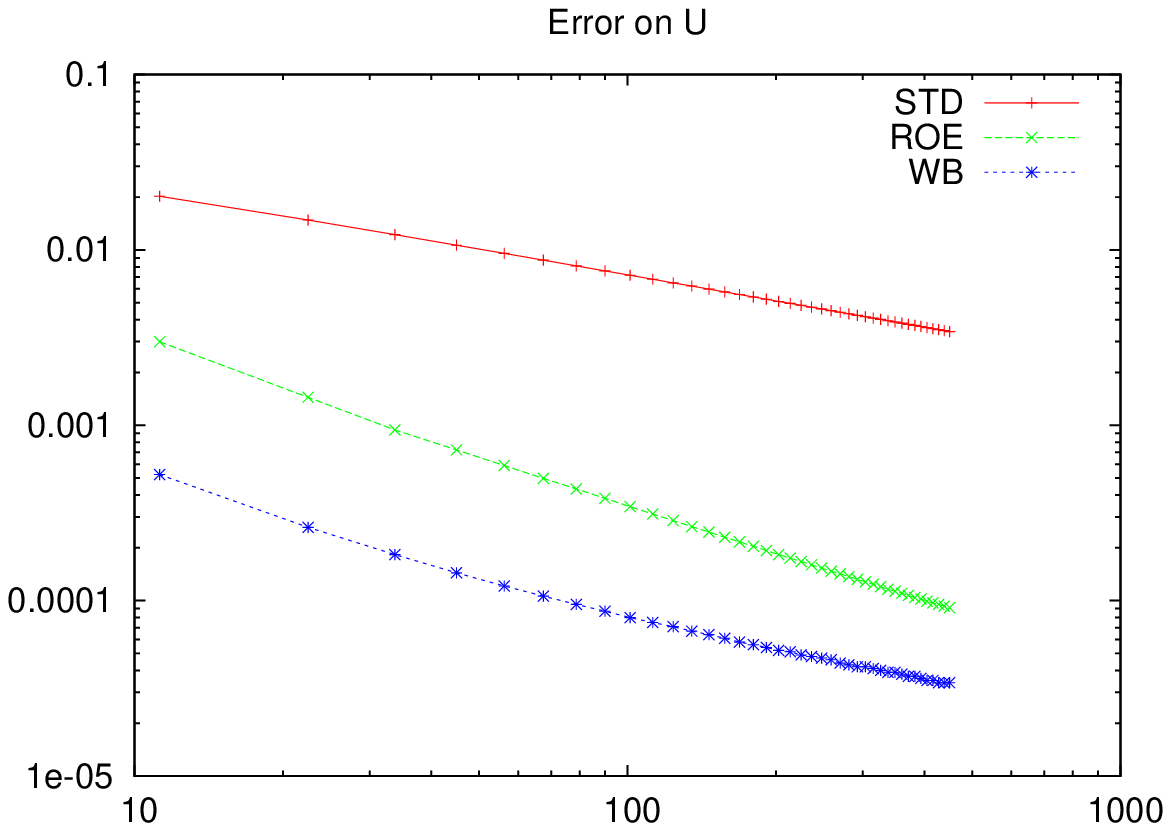}
&
\includegraphics{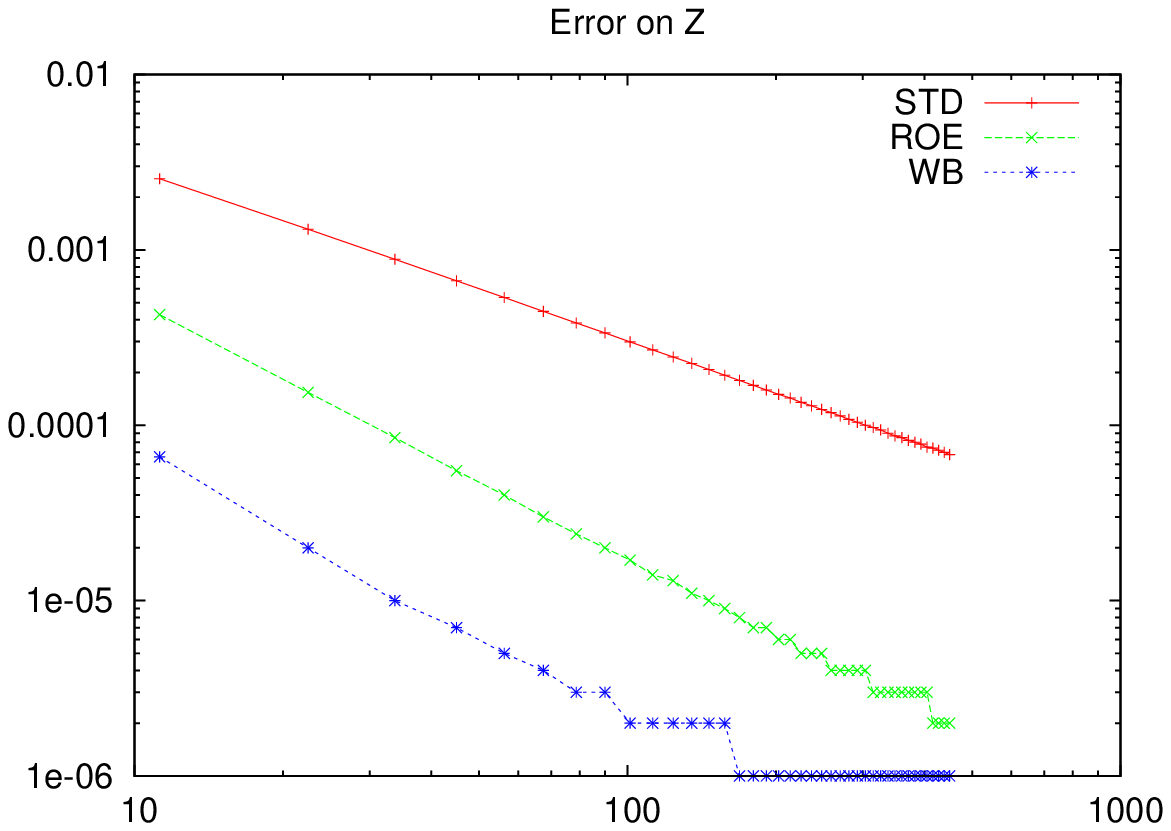}\\
(c) & (d)
\end{tabular}}
\caption{Linear Case Test for $\alpha=0$, section \ref{linT1}. (a)-(b) zoom on the solutions $u$ and $z$ respectively obtained by the different schemes at final time $T$ with $\Dx=0.08$. The plot show a better performance of TAHO/ROE and WB-GT scheme, with respect the STD approximation. As described in section \ref{linT1} WB-GT scheme is Time-AHO with respect the conservative variables and, as TAHO/ROE, it is of order $O(\Dx^2)$ with respect the parabolic asymptotic state. (c)-(d) time evolution of the $l^\infty$ errors $e_u(t)$ and $e_z(t)$ defined in \eqref{decaytest} for the different schemes. As expected by our asymptotic analysis, for the TAHO/ROE and WB-GT schemes, the absolute errors $e_{u,z}(t)$ decay faster as the time increases.
}
\label{fig-lin_a0}
\end{figure}

\subsection{Results for the non-linear $2\times2$ test case}
We fix $q=\lambda$ and we compare for the $2\times2$ case the scheme TAHO \eqref{aho} with ROE \eqref{roe_gen} and STD \eqref{std_gen}.
\\
We shall consider the two different cases $F'(0)\ne 0$ and $F'(0)=0$ and we select as initial datum the function 
\begin{equation}
u_0 = \chi_{[-1,1]} \left(- x^2 + 1\right), \quad Z_0 = \Frac{1}{\lambda} F(u_0(x)).
\end{equation}

\subsubsection{The case $F'(0)\ne 0$}\label{nLinT2}
Here we fix 
$$ F(u)= a \left(u - u^2\right) $$
and we compare our TAHO approximation \eqref{aho} with STD and ROE schemes, defined in \eqref{std_gen} and \eqref{roe_gen} respectively.

The numerical results show a better performance of the TAHO scheme. In Figure \ref{fig-nlin2}-(a)-(b) we plot a zoom on the solutions $u$ and $Z$ respectively, obtained by the different schemes at final time $T$. The solution given by applying the TAHO scheme follows much better than the other the benchmark curve. We stress on that the numerical solutions are computed with quite big step $\Dx=0.1$. 
\\
Moreover, always in Figure \ref{fig-nlin2}, the two plots $(c)$ and $(d)$ show the time evolution of the $l^\infty$ errors $e_u(t)$ and $e_z(t)$ defined in \eqref{decaytest} for all schemes considered; They show how for the TAHO scheme both errors decay as time increases more quickly than others. 
This result is also confirmed by Table \ref{tab-nlin_2}, where the values of $\gamma$ and $C$ are computed. Looking at the different values of $\gamma$, it is clear that for the TAHO approximation the decay velocity of the absolute error improves of $t^{-1/2}$ on the previous schemes.
\begin{figure}[ht]
\scalebox{0.5}{
\begin{tabular}{cc}
\includegraphics{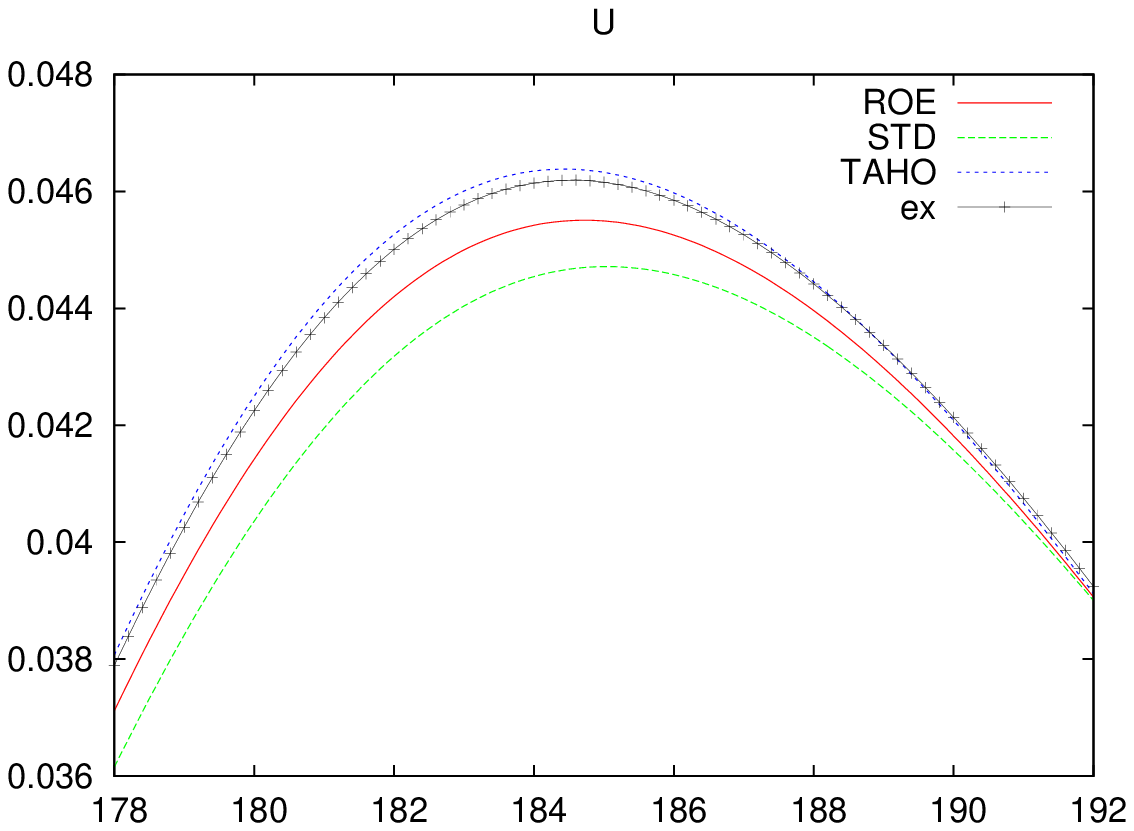}
&
\includegraphics{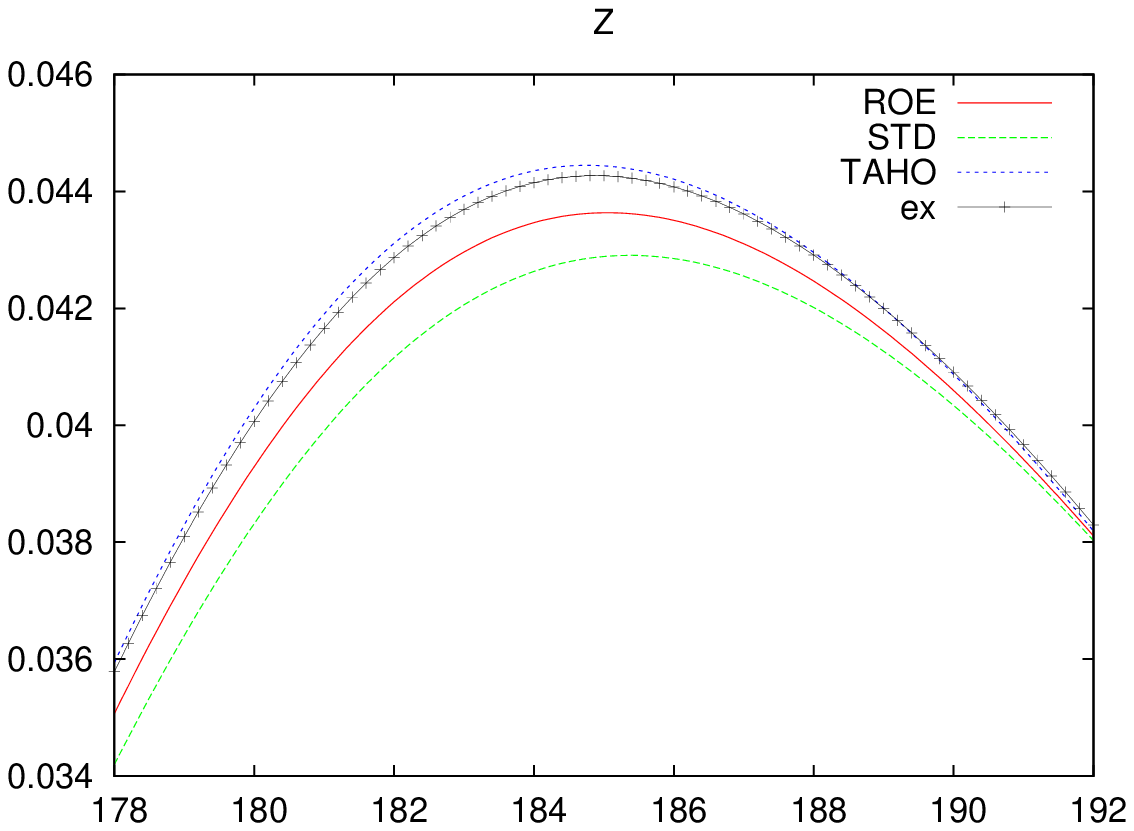}\\
(a) & (b) \\
\includegraphics{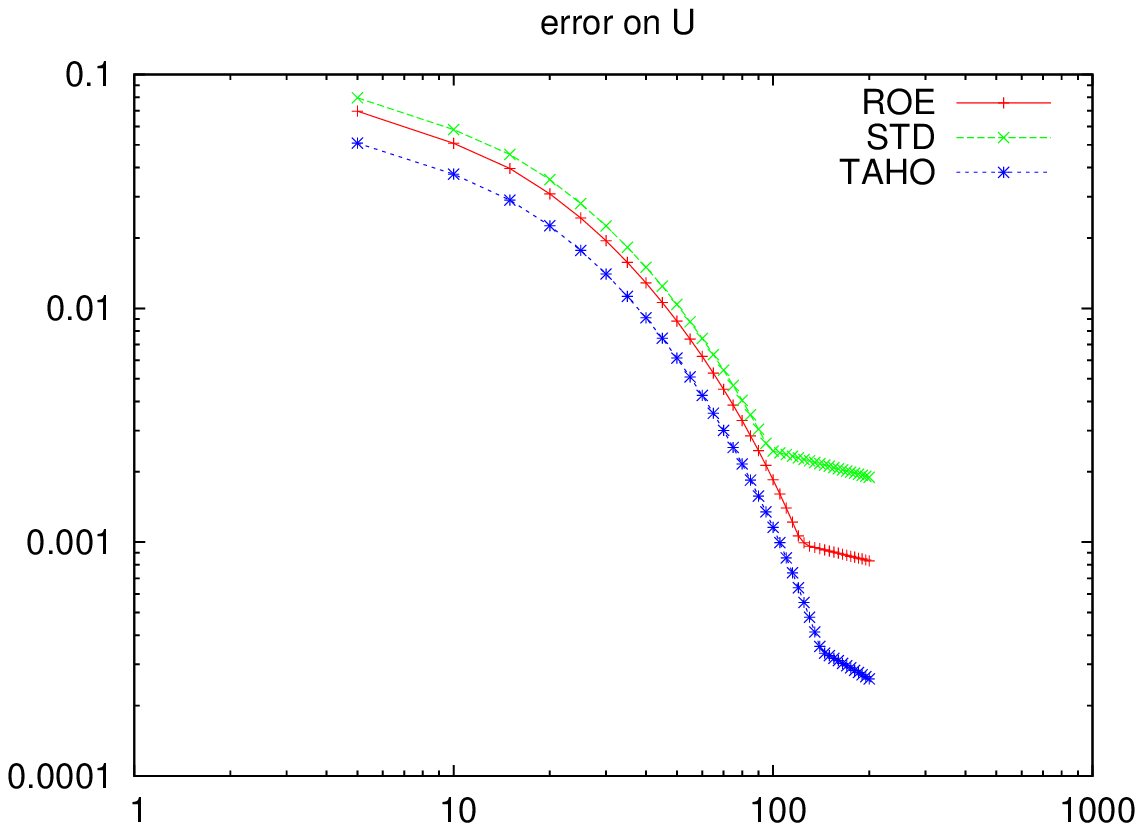}
&
\includegraphics{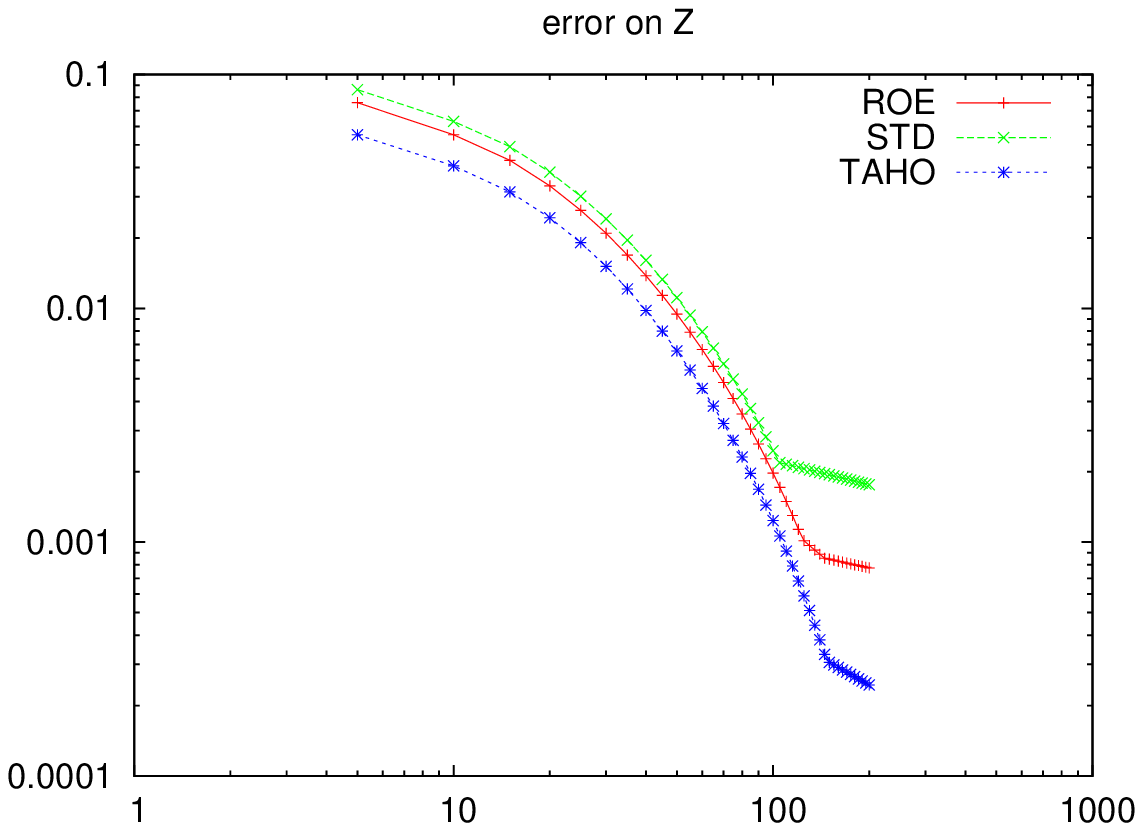}\\
(c) & (d)
\end{tabular}}
\caption{Non-Linear Case Test with $F'(0)\ne 0$, see section \ref{nLinT2}. (a)-(b) Zoom on the solutions $u$ and $Z$ respectively obtained by the different schemes at final time $T$. The plot show that TAHO scheme gives better results than others with a quite big step $\Dx=0.1$. (c)-(d) Time evolution of the $l^\infty$ errors $e_u(t)$ and $e_z(t)$ defined in \eqref{decaytest} for the different schemes. As expected by our asymptotic analysis, for the TAHO scheme the absolute errors $e_{u,z}(t)$ decay faster as the time increases. This result is confirmed in Table \ref{tab-nlin_2}, where we compute the decay parameters $\gamma$ of absolute errors previously plotted.
}
\label{fig-nlin2}
\end{figure}

\begin{table}[ht]
\begin{center}
\begin{tabular}{|c|c|c|c|c|c|c|c|c|}\hline
\mbox{scheme} & $C_u$ &  $\gamma_u$ & $C_z$ & $\gamma_z$ \\ \hline
 STD & 0.013797 & 0.374708 & 0.010744 &  0.341554 \\ \hline
 ROE & 0.004874 & 0.333634 & 0.007850 &  0.439996  \\ \hline
 TAHO & 0.111380 & 1.151517 & 0.495480 &  1.451030  \\ \hline
\end{tabular}
\caption{Non-Linear Case Test with $F'(0)\ne 0$, see section \ref{nLinT2}. Evaluation of constants $\gamma$ and $C$ for $e_u(t)= C_u t^{-\gamma_u}$ and $e_z(t)= C_z t^{-\gamma_z}$ defined in \eqref{decaytest}. For standard approximation STD and ROE, the absolute error decays as $e_{u,z}(t)\approx O(t^{-1/2})$; while, for the TAHO scheme it improves of $t^{-1/2}$.
}\label{tab-nlin_2}
\end{center}
\end{table}

\subsection{Results for the $3\times 3$ system.}\label{nLinT3x3}
As initial data, we take the smooth
function $u_0$ defined by
\[ u_0(x)=\chi_{[-1,1]} \, {\rm exp}\left(1- \frac{1}{1-x^2}\right).
\]
Then we set $f_0(x)=M(u_0(x))$. We know that in this case one has
$u(x,t) \in [0,1]$ for all $(x,t) \in \R \times [0,+\infty[$. We
choose $a=1$, $\lambda=2.1$, $\beta=(\alpha-a)/\lambda=0.1$. The discretization
parameters are $\Delta x=0.1$, $\rho=\displaystyle \frac{1}{2
  \lambda}$, which satisfy all the monotonicity requirements, see
proposition \ref{mono-33}.

The numerical results show a better performance of the TAHO scheme. In
Figures \ref{fig-nlin3-u}, \ref{fig-nlin3-z1}-(a)-(b),
\ref{fig-nlin3-z2}-(a)-(b),  we plot the solutions $u$, $Z_1$ and $Z_2$ respectively, obtained by the the STD, ROE and TAHO schemes at final time $T$, as
well as the exact (reference) solution. The solution given by applying the TAHO scheme follows much better than the other the benchmark curve. We stress on that the numerical solutions are computed with quite big step $\Dx=0.1$. 
\\
Then in Figure \ref{fig-nlin3-err}-(a)-(b), we plot the time evolution
of the $l^\infty$ errors $e_u(t)$ and $e_z(t)$. They show how for the TAHO scheme both errors decay as time increases more quickly than other. 
This result is also confirmed by Table \ref{tab-nlin3}, where the
values of $\gamma$ and $C$ are computed. Looking at the different
values of $\gamma$, it is clear that for the TAHO approximation the
decay velocity of the absolute error improves of $t^{-1/2}$ on the
previous schemes.
\begin{center}
\begin{table}[ht]
\begin{tabular}{|c|c|c|c|c|}\hline
\mbox{scheme} & $C_u$ &  $\gamma_u$ & $C_z$ & $\gamma_z$ \\ \hline
STD & 0.0052  &  0.54   & 0.0064 &  1.1 \\ \hline
ROE & 0.0027 & 0.66 & 0.0036  &  1.2 \\ \hline
TAHO & 0.006 & 1 &        0.012 & 1.62\\ \hline
\end{tabular}
\caption{The $3 \times 3$ system with $F^\prime(0)\ne0$, section
  \ref{nLinT3x3}. Evaluation of
  constants $\gamma$ and $C$ for $e_u(t)= C_u t^{-\gamma_u}$ and
  $e_z(t)= C_z t^{-\gamma_z}$ defined in \eqref{decaytest}. For STD and ROE
  approximations, the numerical results show that the
  absolute error decays as $e_u(t)= O( t^{-1/2})$ and $e_z(t)= O(
  t^{-1})$; while, for TAHO's it improves of $t^{-1/2}$.}
\label{tab-nlin3}
\end{table}
\end{center}    
\begin{figure}[ht]
\begin{tabular}{cc}
\includegraphics[height=6.0cm,angle=-90]{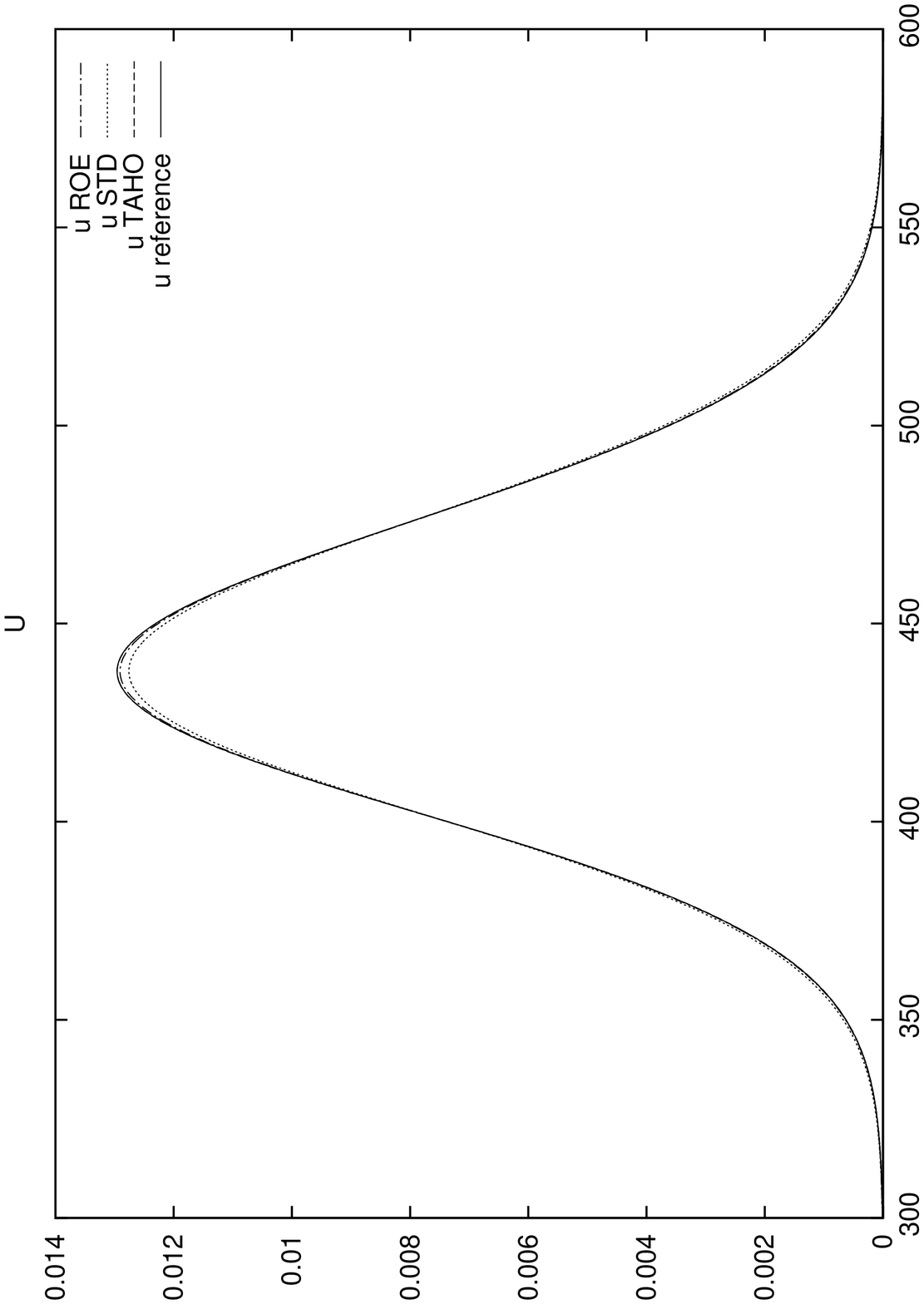}&
\includegraphics[height=6.0cm,angle=-90]{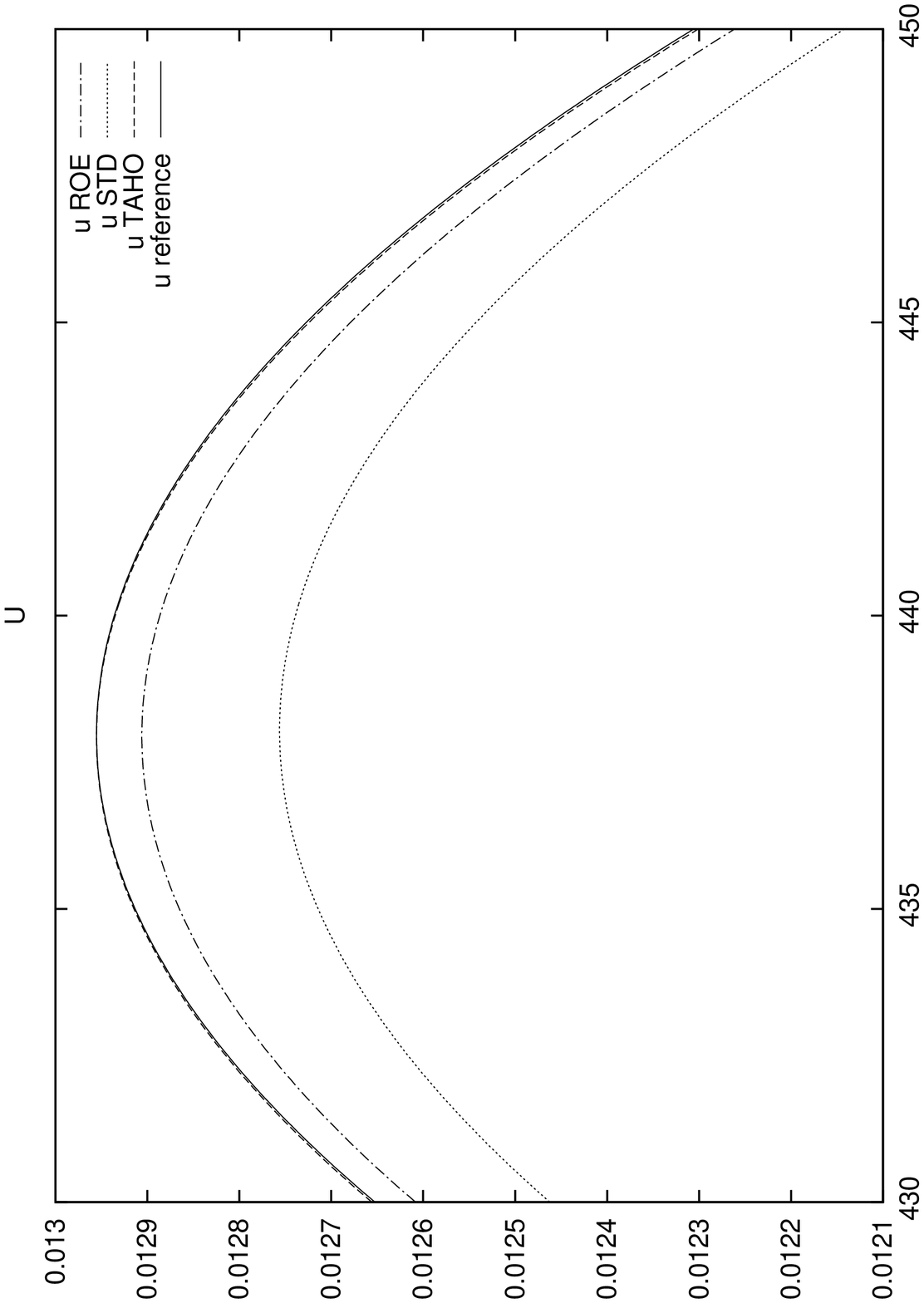}
\end{tabular}
\caption{The $3 \times 3$ system with $F^\prime(0)\not =0$. Left: $u$ component at
  final time $T$. Right: detail. The reference and the TAHO computed solutions cannot be distinguished.}
\label{fig-nlin3-u}
\end{figure}

\begin{figure}[ht]
\begin{tabular}{cc}
\includegraphics[height=6.0cm,angle=-90]{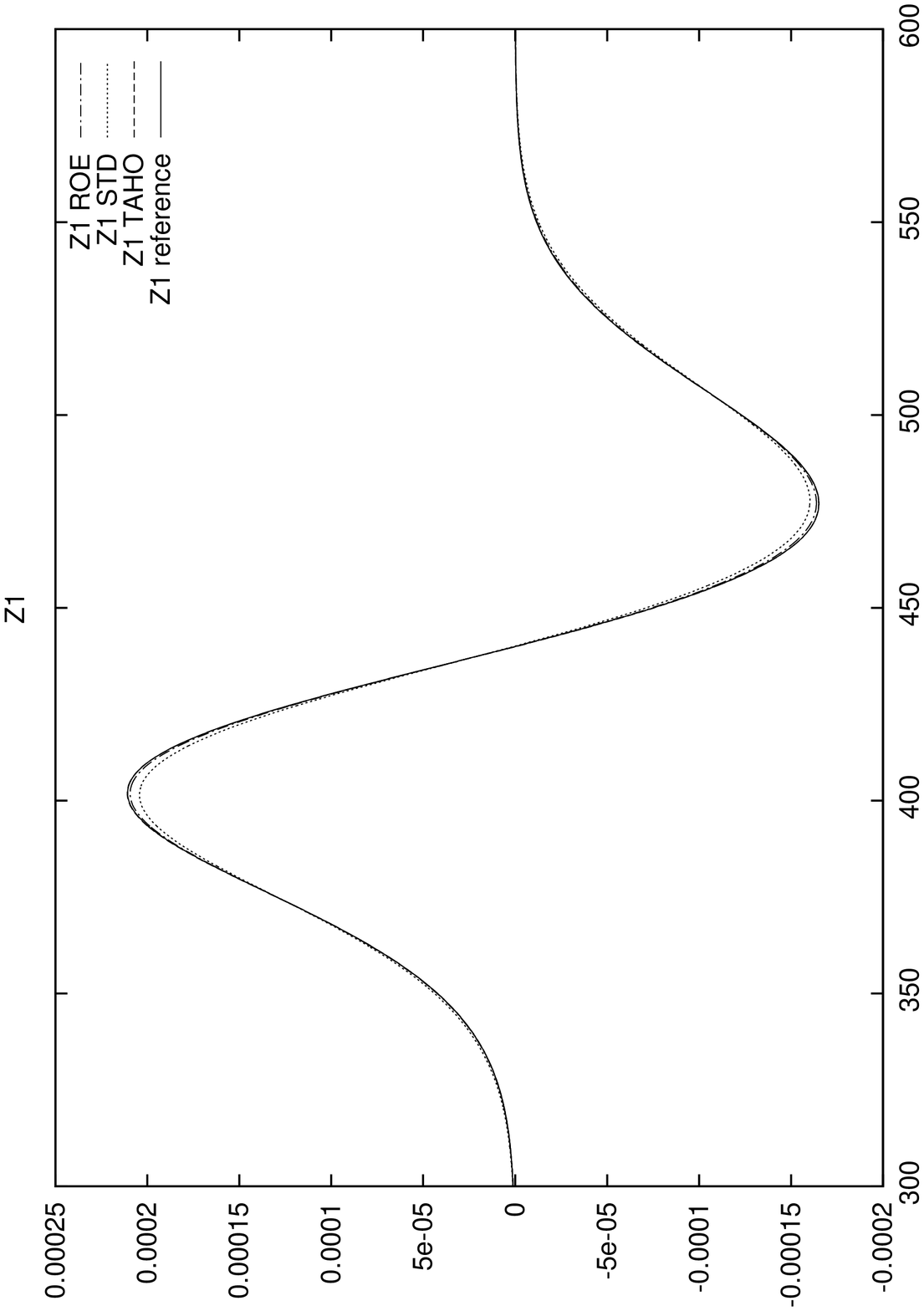}
&
\includegraphics[height=6.0cm,angle=-90]{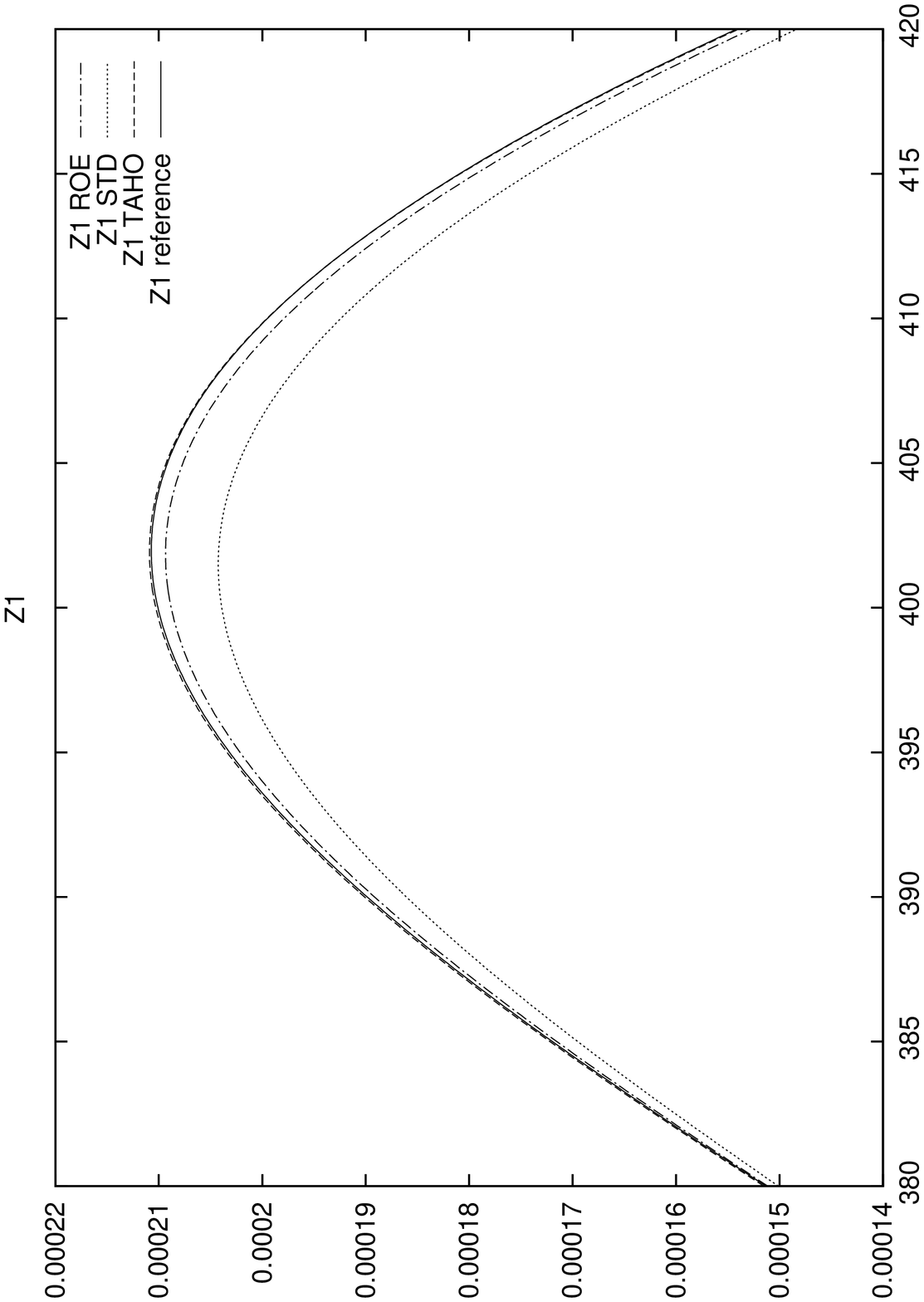}\\
\\[15pt] 
(a) & (b)
\end{tabular}
\caption{The $3 \times 3$ system with $F^\prime(0)\not =0$. Left: $Z_1$ component at
  final time $T$. Right: detail. The reference and the TAHO computed solutions cannot be distinguished.}
\label{fig-nlin3-z1}
\end{figure}

\begin{figure}[ht]
\begin{tabular}{cc}
\includegraphics[height=6.0cm,angle=-90]{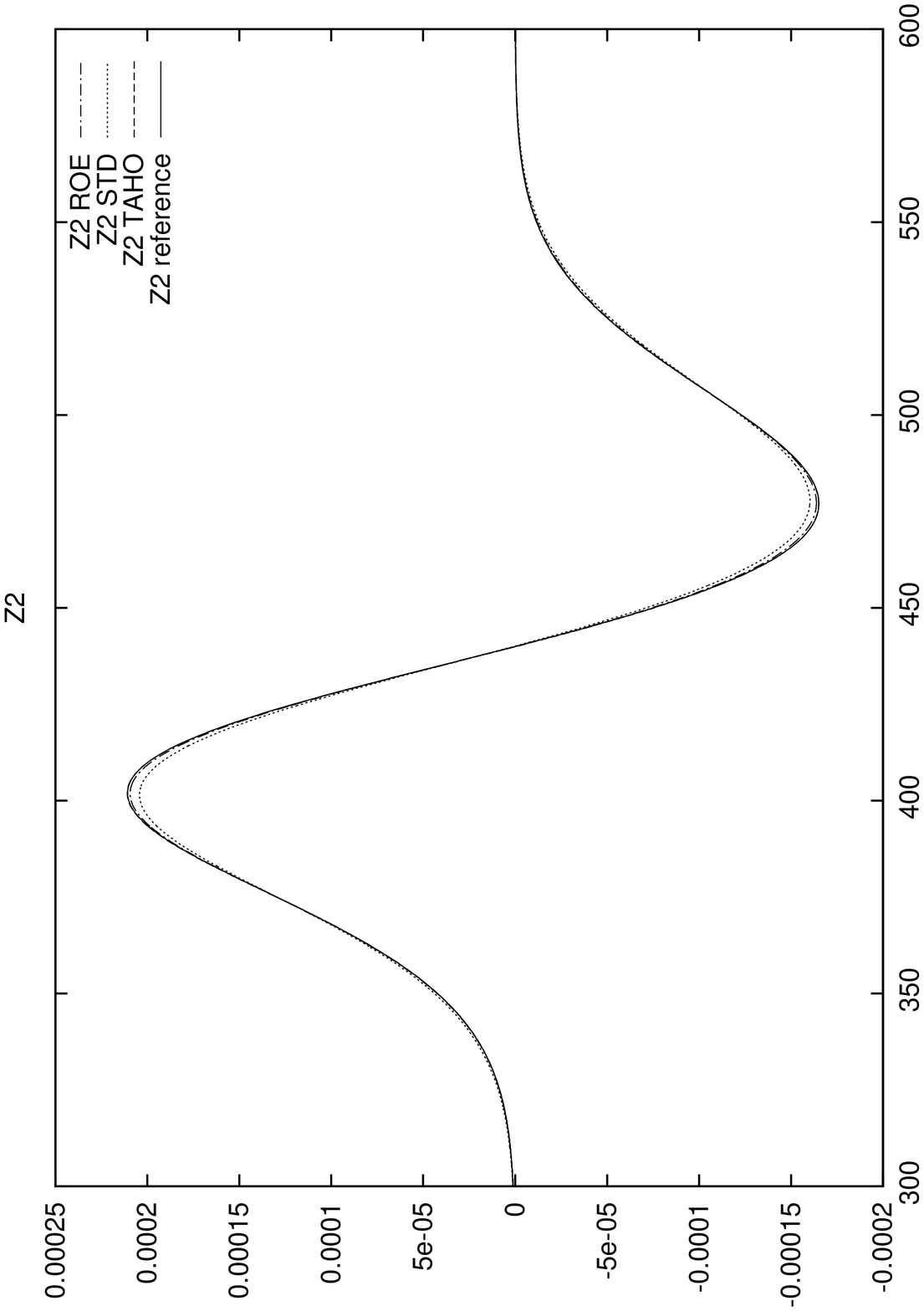}
&
\includegraphics[height=6.0cm,angle=-90]{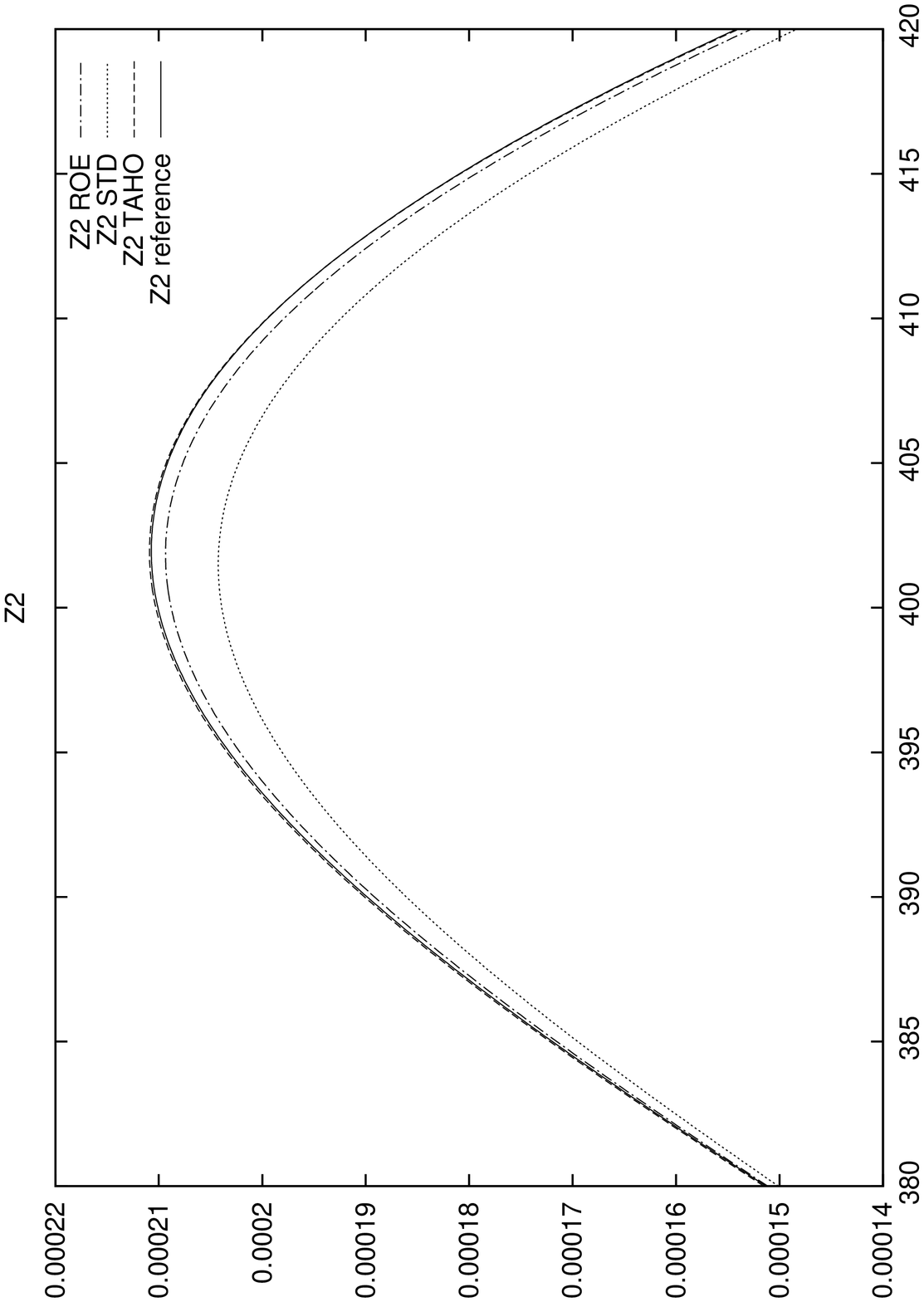}\\
\\[15pt] 
(a) & (b)
\end{tabular}
\caption{The $3 \times 3$ system with $F^\prime(0)\not =0$. Left: $Z_2$ component at
  final time $T$. Right: detail. The reference and the TAHO
  computed solutions cannot be distinguished.}
\label{fig-nlin3-z2}
\end{figure}

\begin{figure}[ht]
\begin{tabular}{cc}
\includegraphics[height=6.0cm,angle=-90]{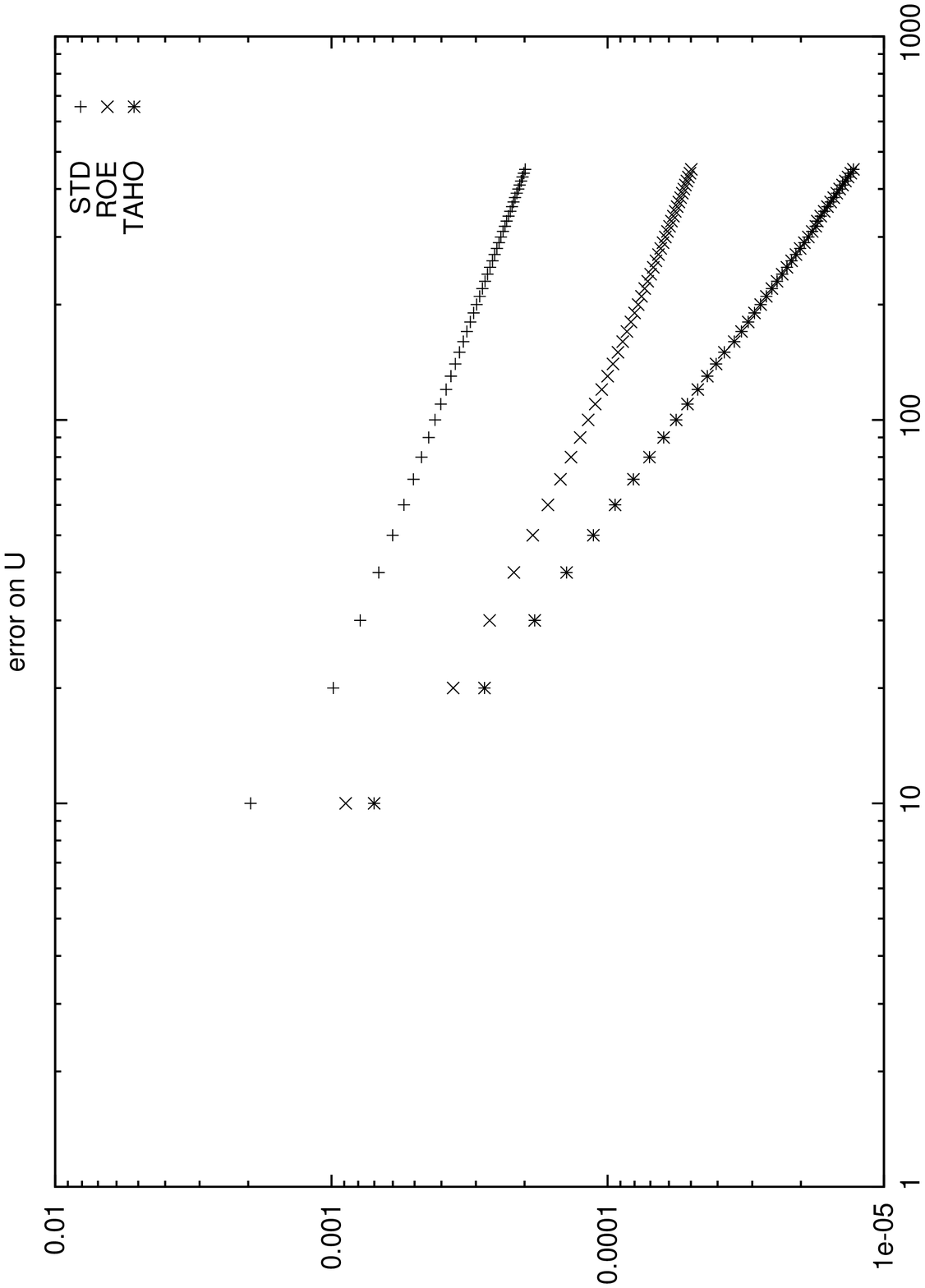}&
\includegraphics[height=6.0cm,angle=-90]{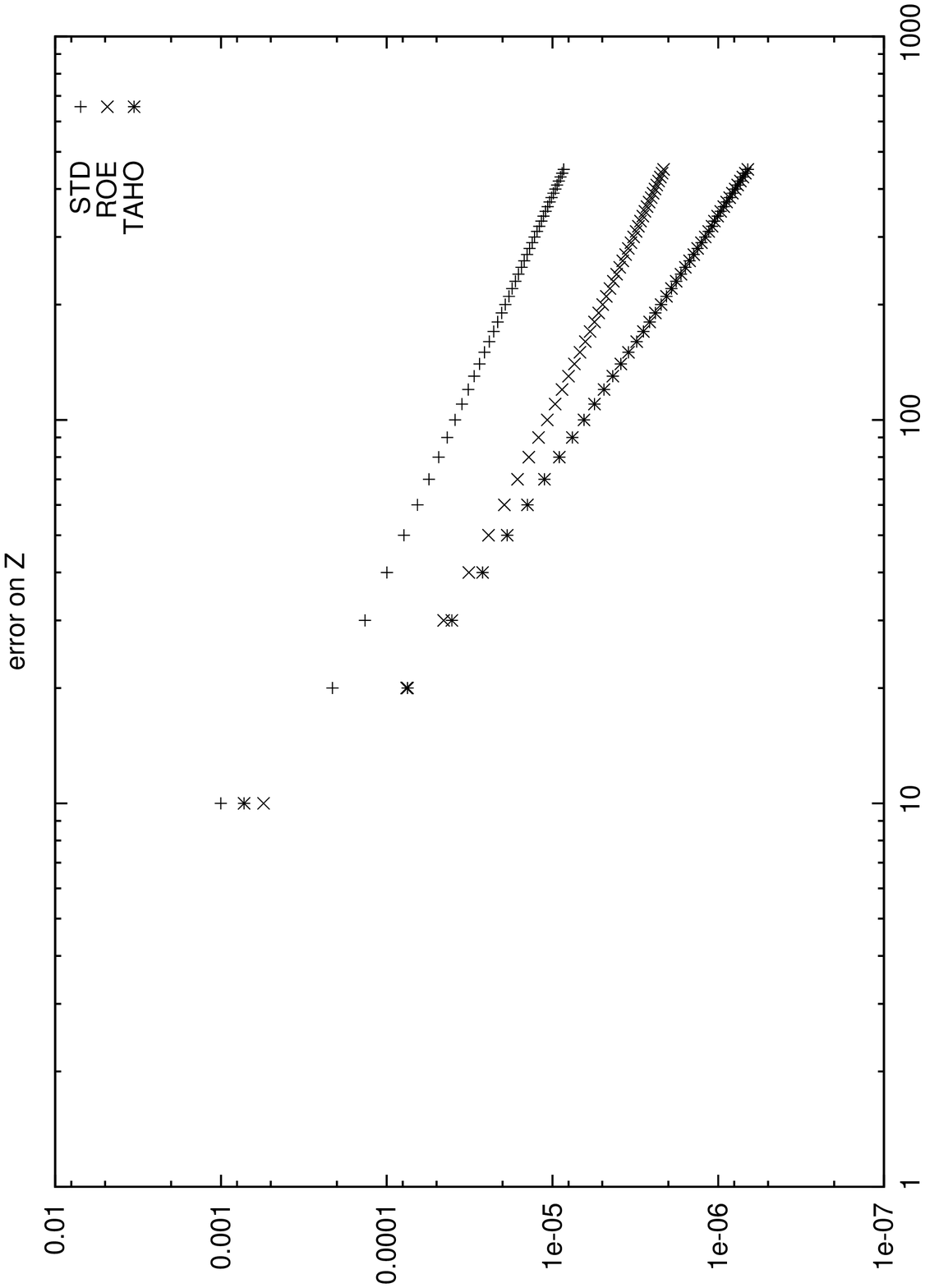}
\end{tabular}
\\[15pt] 
\caption{The $3 \times 3$ system with $F^\prime(0)\not =0$. Absolute
  errors for $u$ component (left) and $Z$ components (right) with
  respect to time.}
\label{fig-nlin3-err}
\end{figure}


\begin{thebibliography}{69}


\bibitem{DMR}  {\sc D. Aregba-Driollet, M. Briani, R. Natalini}, { \em Asymptotic high-order schemes for 2X2 dissipative hyperbolic systems},   SIAM J. Numer. Anal.   \textbf{46} (2008), no.2, 869--894. 


\bibitem{BV94}
{\sc A.~Bermudez and M.E. Vasquez},
{\em Upwind methods for hyperbolic conservation laws with source terms}.
 Comput. and Fluids, 23 (1994), pp.~1049--1071.

\bibitem{BHN} 
\newblock  {\sc S. Bianchini,  B. Hanouzet and  R. Natalini},
\newblock  {\em Asymptotic behavior of smooth solutions for partially
Dissipative hyperbolic systems with a convex entropy},
\newblock Communications Pure Appl. Math. \textbf{60} (2007), 1559--1622.

 \bibitem{Bou99} {\sc F.~Bouchut},
{\em Construction of BGK models with a family of kinetic entropies for a
given system of conservation law}, J. Statist. Phys.  95 (1999), 113--170.


\bibitem{bou}
{\sc F.~Bouchut},
{\em Nonlinear stability of finite volume methods for hyperbolic
  conservation laws and well-balanced schemes for sources}.
 {Birkh\" auser}, {Frontiers in Mathematics series Birkh\" auser,
  ISBN 3-7643-6665-6}, 2004.


\bibitem{BOP07} 
{\sc F.~Bouchut, H.~Ounaissa, and B.~Perthame},
{\em Upwinding of the source term at interfaces for Euler equations with high friction},
 Comput. Math. Appl.  53  (2007), pp.~361--375. 





\bibitem{Go00} 
{\sc L.~Gosse},
{\em A well-balanced flux-vector splitting scheme designed for hyperbolic systems of conservation laws with source terms},
 Comput. Math. Appl.  39  (2000), pp.~135--159. 

 \bibitem{Go02} 
{\sc L.~Gosse},
{\em Localization effects and measure source terms in numerical schemes for balance laws},
Math. Comp. 71 (2002), 553--582.
\bibitem{GJ02} 
{\sc L.~Gosse, F. James},
{\em Convergence results for an inhomogeneous system arising in various high frequency approximations}, Nmer. Math., 90 (2002), no 4, 721-753.

\bibitem{GT02} 
{\sc L.~Gosse, G. Toscani},
{\em An asymptotic-preserving well-balanced scheme for the hyperbolic heat equations}, C. R. Math. Acad. Sci. Paris  334  (2002),  no. 4, 337--342.


\bibitem{GT03} 
{\sc L.~Gosse, G. Toscani},
{\em Space localization and well-balanced scheme for discrete kinetic models in diﬀusive regimes}, SIAM J. Numer. Anal. 41 (2003) 641--658.



\bibitem{GrLe96} 
{\sc J.~M. Greenberg and A.-Y. Le~Roux},
{\em A well-balanced scheme for the numerical processing of source terms
  in hyperbolic equations},
 SIAM J. Numer. Anal. 33 (1996), pp.~1--16.

\bibitem{HaNa03}
{\sc B. Hanouzet, R. Natalini}, 
\emph{ Global existence of smooth solutions for partially dissipative 
	hyperbolic systems with a convex entropy},
Arch. Ration. Mech. Anal. \textbf{169} (2003), no. 2, 89--117.


\bibitem{Ji99} 
{\sc S.~Jin},
{\em Efficient asymptotic-preserving (AP) schemes for some multiscale kinetic equations}, 
SIAM J. Sci. Comput.  21  (1999), pp.~441--454.

\bibitem{Ji01} 
{\sc S.~Jin},
 {\em A steady-state capturing method for hyperbolic systems with
  geometrical source terms},
  Math. Model. Num. Anal.  35 (2001), pp.~631--646.


\bibitem{Ji10} 
{\sc Jin, Shi; Shi, Yingzhe},
{\em  A micro-macro decomposition-based asymptotic-preserving scheme for the multispecies Boltzmann equation},
 SIAM J. Sci. Comput.  31  (2009/10), pp.~4580--4606.

\bibitem{JX95} {\sc Jin, Shi; Xin, Zhou Ping}, {\em  The relaxation schemes for systems of conservation laws in arbitrary space dimensions}. Comm. Pure Appl. Math. 48 (1995), no. 3, 235--276. 

\bibitem{LeVbook92}{\sc LeVeque, Randall J.}, {\em Numerical methods for conservation laws}. Lectures in Mathematics ETH Z\"urich, second edition (1992).

\bibitem{Na96} R. Natalini, \emph{Convergence to equilibrium for the relaxation approximations of conservation laws}, Comm. Pure Appl. Math. \textbf{49} (1996), no. 8, 795--823.


\bibitem{PR03}
{\sc L.~Pareschi and G.~Russo},
{\em  Implicit-explicit Runge-Kutta schemes and applications to hyperbolic
  system with relaxation}, J. Sci. Comput.  25  (2005),  pp.~129--155.

\bibitem{Roe} 
 {\sc P.~L. Roe},
{\em Upwind differencing schemes for hyperbolic conservation laws with source term},
  in Nonlinear Hyperbolic Problems, C. Carasso, P. A. Raviart, and D. Serre, eds., Lecture Notes in Math. 1270, Springer, Berlin, 1987, pp.~41--51.

\bibitem{ShKa84} Shizuta, Y.; Kawashima, S.;
{\em Systems of equations of hyperbolic--parabolic
type with applications to the discrete Boltzmann equation,}
Hokkaido Math. J. 14 (1984) 435--457.

\end{thebibliography}
\end{document}